\numberwithin{paragraph}{section}
\numberwithin{equation}{section}
\renewcommand{\Bbb}{\mathbb{B}}
\newcommand{\Dcal}{\mathcal{D}}
\newcommand{\Ecal}{\mathcal{E}}
\newcommand{\Fcal}{\mathcal{F}}
\newcommand{\Gcal}{\mathcal{G}}
\newcommand{\Hcal}{\mathcal{H}}
\newcommand{\Ical}{\mathcal{I}}
\newcommand{\Jcal}{\mathcal{J}}
\newcommand{\Kcal}{\mathcal{K}}
\newcommand{\Lcal}{\mathscr{L}}
\newcommand{\Mcal}{\mathcal{M}}
\newcommand{\Ncal}{\mathcal{N}}
\newcommand{\Ocal}{\mathcal{O}}
\newcommand{\Scal}{\mathcal{S}}
\newcommand{\Xcal}{\mathscr{X}}
\newcommand{\Zcal}{\mathcal{Z}}
\newcommand{\fL}{{\mathfrak{L}}}
\newcommand{\fV}{{\mathfrak{V}}}
\newcommand{\C}{\mathbb{C}}
\newcommand{\hH}{{\widehat{H}}}
\newcommand{\N}{\mathbb{N}}
\newcommand{\OY}{\mathcal{O}_Y}
\newcommand{\Q}{\mathbb{Q}}
\newcommand{\R}{\mathbb{R}}
\newcommand{\Z}{\mathbb{Z}}
\newcommand{\Lan}{L^{\an}}
\DeclareMathOperator{\red}{{red}}
\DeclareMathOperator{\an}{{an}}
\renewcommand{\and}{\operatorname{and}}
\DeclareMathOperator{\cha}{{char}}
\DeclareMathOperator{\Div}{Div}
\DeclareMathOperator{\im}{{im}}
\DeclareMathOperator{\length}{\ell}
\DeclareMathOperator{\rank}{{rank}}
\DeclareMathOperator{\Spec}{{Spec}}
\DeclareMathOperator{\supp}{{supp}}
\DeclareMathOperator{\tor}{tor}
\DeclareMathOperator{\MA}{MA}
\DeclareMathOperator{\vol}{vol}
\DeclareMathOperator{\cyc}{{cyc}}
\newcommand{\metr}{{\|\phantom{a}\|}}
\newcommand{\st}{ \ \big| \ }
\newcommand{\tors}{\text{tors}}
\renewcommand{\div}{{\operatorname{div}}}
\newcommand{\hhi}{{\widehat{h}^i}}
\newcommand{\Ko}{{K^\circ}}
\newcommand{\Kt}{{\tilde{K}}}
\newcommand{\Xan}{{X^{\an}}}
\newcommand{\Xs}{{\mathscr{X}_s}}
\newcommand{\covol}{{\rm covol}}
\def\KO{{\mathcal O}}
\def\KL{{\mathscr L}}
\def\KX{{{\mathscr X}}}
\def\KU{{\mathscr U}}
\def\an{{\mathrm{an}}}
\def\red{{\mathrm{red}}}
\def\metric{{\|\ \|}}
\def\metr{{\|\ \|}}
\def\modelD{{\mathscr D}}
\def\Pic{{{\rm Pic}\,}}
\def\kcirc{{K^\circ}}
\def\MA{{\rm MA}}
\newcommand{\Hhat}{{\widehat{H}^0}}
\newcommand{\mb}{{\mathbf m}}
\newcommand{\kb}{{\mathbf k}}
\newcommand{\pb}{{\mathbf p}}
\theoremstyle{plain}
\newtheorem{theo}{Theorem}[subsection]
\newtheorem{prop}[theo]{Proposition}
\newtheorem{lemma}[theo]{Lemma}
\newtheorem{cor}[theo]{Corollary}
\newtheorem{theointro}{Theorem}
\theoremstyle{definition}
\newtheorem{defi}[theo]{Definition}
\newtheorem{art}[theo]{}
\theoremstyle{remark}
\newtheorem{rem}[theo]{Remark}
\title[Non-archimedean volumes]{Differentiability of non-archimedean volumes and 
non-archimedean Monge--Amp\`ere equations\\
(With an Appendix by Robert Lazarsfeld)}
\author[J.I.~Burgos Gil]{Jos\'e Ignacio Burgos Gil}
\address{
J. I. Burgos Gil,
Instituto de Ciencias Matem\'aticas (CSIC-UAM-UCM-UCM3), 
Calle Nicol\'as Cabrera 15, Campus de la Universidad 
Aut\'onoma de Madrid, Cantoblanco, 28049 Madrid, Spain}
\email{burgos@icmat.es}
\author[W.~Gubler]{Walter Gubler}
\address{W. Gubler, Mathematik, Universit{\"a}t 
Regensburg, 93040 Regensburg, Germany}
\email{walter.gubler@mathematik.uni-regensburg.de}
\author[P.~Jell]{Philipp Jell}
\address{Philipp Jell,
School of Mathematics,
Georgia Institute of Technology,
686 Cherry Street
Atlanta, GA 30332-0160}
\email{philipp.jell@math.gatech.edu}
\author[K.~Künnemann]{Klaus K{\"u}nnemann}
\address{K. K{\"u}nnemann, Mathematik, Universit{\"a}t 
Regensburg, 93040 Regensburg, Germany}
\email{klaus.kuennemann@mathematik.uni-regensburg.de}
\author[F.~Martin]{Florent Martin}
\address{F. Martin, Mathematik, Universit{\"a}t 
Regensburg, 93040 Regensburg, Germany}
\email{florent.martin@mathematik.uni-regensburg.de}
\thanks{This work was supported by the collaborative research 
center SFB 1085 funded by the Deutsche Forschungsgemeinschaft.
J. I. Burgos was partially supported by MINECO research projects MTM2016-79400-P and by ICMAT Severo Ochoa project SEV-2015-0554.
Robert Lazarsfeld was partially supported by NSF grant DMS-1439285.
 }
\begin{document}

\begin{abstract}
Let $X$ be a normal projective variety over a 
complete  discretely valued field and 
$L$ a line bundle on $X$. 
We denote by $X^\textrm{an}$ the analytification of
$X$ in the sense of Berkovich and equip the analytification 
$L^\an$ of $L$ with a continuous metric $\metr$.
We study non-archimedean volumes, a tool which allows us to control the asymptotic growth of   
small sections  of big powers of $L$.
We prove that the
non-archimedean volume is differentiable at a continuous semipositive metric and that the derivative 
is given by integration with respect to a Monge--Amp\`ere measure.  
Such a differentiability formula had been proposed  
by M.~Kontsevich and Y.~Tschinkel. 
In residue characteristic zero, it 
implies an orthogonality property for non-archimedean
plurisubharmonic functions which allows us to drop an algebraicity  
assumption in a theorem of S.~Boucksom, C.~Favre and M.~Jonsson about the solution to the non-archimedean Monge--Amp\`{e}re equation.
The appendix by R. Lazarsfeld establishes the holomorphic Morse inequalities 
in arbitrary characteristic.

\bigskip

\noindent
MSC: Primary 32P05; Secondary  14C20, 14G22, 32U05, 32W20
\end{abstract}

\maketitle

\tableofcontents

\section{Introduction}
\label{section introduction}

\subsection{Monge--Amp\`{e}re equations} \label{subsec: MA}
Let $(X,\omega)$ be a compact K\"{a}hler manifold of dimension $n$,
normalized by $\int\omega^{\wedge n}=1$.
For a probability measure $\mu$ on $X$ which is induced  by a smooth volume form, E.~Calabi conjectured  that the {\it Monge-Amp\`{e}re equation} 
$\eta^{\wedge n} = \mu$ 
has a unique  solution by a real smooth $(1,1)$-form $\eta$  in the same de Rham class as $\omega$. 
Uniqueness was proven by E.~Calabi \cite{Calabi-Proceedings, Calabi-incollection} and the
existence of solutions of the Monge--Amp\`ere equation was settled by
S.T.~Yau \cite{yau78}.

Now we consider a field $K$ endowed with a discretely valued complete absolute value. Let $L$ be a line bundle on an $n$-dimensional projective variety $X$ over $K$. For a continuous semipositive metric $\metr$ on $\Lan$, A.~Chambert--Loir has introduced the Monge--Amp\`ere  measure $c_1(L,\metr)^{\wedge n}$ on the analytification $\Xan$ as a Berkovich space (see Section \ref{section-semipositive-metrics} for details). Then $c_1(L,\metr)^{\wedge n}$ is a positive Radon measure of total mass equal to the degree of $X$ with respect to $L$. 
Assume that $X$ is smooth and $L$ is ample.
In the non-archimedean analogue of the Calabi--Yau problem,  there is a positive Radon measure $\mu$ of total mass $\deg_L(X)$ given on $\Xan$ and we ask for a continuous semipositive metric $\metr$ on $\Lan$ with $\mu= c_1(L,\metr)^{\wedge n}$.

Uniqueness of the metric $\metr$ up to scaling was shown by X. Yuan and S.~Zhang
\cite[Cor. 1.2]{yuan-zhang}. 
In \cite{BFJ1,BFJ2}, S.~Boucksom, C.~Favre and M.~Jonsson 
have proved the existence assuming that the residue field $k$ of $K$ has characteristic zero, that $\mu$ is supported on the dual complex of some SNC model of $X$ and that $X$ satisfies the {algebraicity} condition $(\dagger)$. 
The latter means that 
$X$ is defined over the
function field  of a  
curve over $k$ having $K$ as its completion at a closed point.
Condition $(\dagger)$ is essential in their proof,
allowing them to use global methods on the model to prove the
existence of solutions of the non-archimedean Monge--Amp\`ere equation.  
However, this global hypothesis is quite strong as a variety over a field as $\C((t))$ is usually not defined over a function field of a curve over $\C$.

The main motivation of the present work is to remove condition $(\dagger)$, following a strategy    
outlined in  unpublished notes by M.~Kontsevich and Y.~Tschinkel
\cite{kontsevitch-tschinkel}. To this end we need 
some local volumes to replace the global methods used in
\cite{BFJ1,BFJ2}.

\subsection{Volumes of line bundles on algebraic varieties} \label{subsec: vol intro}

Let $k$ be an algebraically closed field and $Y$ a projective variety over $k$ of dimension $n$.
For a line bundle $L$ on $Y$, the {\it volume} 
\begin{equation*}
\label{eq volume intro}
\vol(L) \coloneqq \limsup_m \frac{h^0(Y,L^{\otimes m})}{ m^n /n!}
\end{equation*}
is  in $\R_{\geq 0}$ (see \cite{Laz1}). 
Outside the nef cone, we have Siu's inequality \cite[2.2.47]{Laz1}
in terms of algebraic intersection numbers:  
if $L,M$ are nef, then 
$\vol(L \otimes M^{-1}) \geq L^n\ -nL^{n-1} \cdot M$. 
It is also known that the function 
$\vol$ is differentiable on the big cone \cite{BFJ0}.

For $i \in \N$, A. K\"{u}ronya \cite{Kur06} has introduced
\emph{asymptotic cohomological functions} 
\begin{equation*}
\widehat{h}^i(Y,L) \coloneqq \limsup_m \frac{h^i(Y, L^{\otimes m} )}{ m^n / n!}.
\end{equation*}
In particular $\widehat{h}^0 = \vol$. 
For $L$ nef, and $i>0$, 
one has $\widehat{h}^i(Y,L) =0$ \cite[1.4.40]{Laz1}
and the main difficulty is again to understand $\widehat{h}^i$ outside
of the nef cone. 
For $L$ and $M$ nef line bundles on $Y$, the \emph{asymptotic holomorphic Morse inequalities} 
give   
\begin{equation}
\label{eq morse intro}
\widehat{h}^i(Y,L \otimes M^{-1}) \leq \binom{n}{i} 
L^{n-i} \cdot M^i.
\end{equation}
First, an analytic proof of these inequalities was given 
by J.P.~Demailly \cite{Dem85}. Later F.~Angelini \cite{Ang96} gave an algebraic
proof in characteristic zero. 
For our applications in this paper, we need the volume and 
the asymptotic cohomological functions for projective schemes over an 
arbitrary field $k$. 
In the appendix by R.~Lazarsfeld, there is an algebraic proof of
\eqref{eq morse intro} which works for a projective scheme $Y$ over any field.

We will study cohomological 
functions  in Section \ref{section asymptotic}. 
More precisely we generalize classical results
about the asymptotic behavior of the dimension
of the higher cohomology
of a coherent sheaf $\mathcal F$ on a projective variety 
twisted by a family of divisors $D_1,\ldots, D_m$ 
(see Prop. \ref{prop asymptotic})
and show that the asymptotic  is uniform in $D_1,\ldots, D_m$.
These results might be of independent interest and have been 
used already in \cite{born-nickel-16}.
{In \S \ref{subsection nonreduced}, we consider the more general case of a projective scheme $Y$ over a noetherian ring since we need this for  Sections \ref{section non-archimedean volume} and \ref{section differentiability}. Then $Y$ is allowed to be  non reduced or non irreducible.}

\subsection{Arithmetic Volumes of line bundles} \label{subsec: arvol intro}
A.~Moriwaki \cite{moriwaki-2009} has introduced an arithmetic analogue of
the volume in the setting of Arakelov theory. Let $F$ be a number
field, $Y$ a projective variety over $F$ of dimension $n$ and $L$ a
line bundle on $Y$. For each place $v$ of $F$, let $F_{v}$ be the
completion of $F$ at $v$ and $Y_{v}^{\an}$ the associated analytic
space
(either as a  complex analytic space
or as a Berkovich space). Assume we are given, for each place
$v$, a continuous metric
$\metr_{v}$ on the analytic line bundle 
$L_{v}^{\rm an}$ over $Y_{v}^{\an}$
determined by $L$. We assume also that almost
all metrics 
$\metr_v$ are determined by a  model of $(Y,L)$ over 
some open subset of
$\textrm{Spec}\,\Ocal_{K}$. Write $\overline L=(L,\{\metr_{v}\}_{v})$ for
the line bundle and the metrics. Then the arithmetic volume of
$\overline L$ is defined as
\begin{equation*}
  \widehat{\vol}(\overline L)\coloneqq \limsup_m
  \frac{\log \# \{s\in H^0(Y,L^{\otimes m})\mid  
  \|s\|_{v}^{\otimes m}\le 1\,\
    \forall v\}}{ m^{n+1} /(n+1)!}.
\end{equation*}

A.~Moriwaki \cite{moriwaki-2009} has shown that the arithmetic volume is
continuous. 
H.~Chen \cite{chen08:_posit} has proved that the arithmetic volume 
is in fact a limit as in the classical case. 

The \emph{$\chi$-arithmetic volume} is a variant of the 
arithmetic volume which is also known as the \emph{logarithm
of the sectional capacity}.
Its definition is recalled in Remark \ref{number field case}.
In contrast to the arithmetic volume the $\chi$-arithmetic
volume can also take negative values.
Both volumes agree when $\overline L$ is (arithmetically) nef.
X.~Yuan \cite{Yua08} has proved an analogue of Siu's
inequality for the $\chi$-arithmetic volume 
and used it to prove a very general equidistribution
result.

\subsection{Volumes of balls of bounded sections} \label{subsec: vol ball intro}
Let us now assume that $X$ is a projective variety over 
a local field $K$. 
We also fix a line bundle $L$ on $X$.  
We consider a continuous metric $\metr$ on $\Lan$ and study the asymptotic
behavior of the volume of the sets  
\begin{equation*}
\Hhat(X,L^{\otimes m},\metr^{\otimes m}) \coloneqq \{ s\in \Gamma(X, L^{\otimes m}) \ \big| \  \|s\|_{\sup} \leq 1 \}
\end{equation*}
with respect to a Haar measure $\mu_m$ on $\Gamma(X,L^{\otimes m})$
where 
{$\|s\|_{\sup}=\sup_{p\in X^\an}\|s(p)\|^{\otimes m}$.}
However 
$\mu_m$ is well defined only up to multiplication by a positive constant.
To bypass this ambiguity, one fixes a continuous reference metric $\| \cdot\|_0$
on $\Lan$ and introduces the local volume
\begin{equation}
\label{eq na volumes intro}
\vol(L,\| \ \|, \| \ \|_0) \coloneqq
\limsup_m \frac {n!}  {m^{n+1}} \cdot
\log \left( 
\frac{ 
\mu_m\bigl( \Hhat(X,L^{\otimes m},\metr^{\otimes m}) \bigr)
}{ 
\mu_m\bigl(\Hhat(X,L^{\otimes m},\metr_0^{\otimes m})\bigr)  
} 
\right).
\end{equation}
These \emph{local volumes} will be called
\emph{archimedean} or \emph{non-archimedean} depending on 
the nature of the local ground field $K$. 
If $F$ is a number field,  $K$ is the completion of $F$ at a non-archimedean place $v$ {and $L$ is ample}, then 
we will show in Remark \ref{number field case} that the local volume
at $v$ is a local version of the $\chi$-arithmetic volume obtained by 
choosing fixed metrics at the other places.

{Non-archimedean volumes were introduced}
by M.~Kontsevich and Y.~Tschinkel 
in \cite{kontsevitch-tschinkel}.
Furthermore differentiability for this
local volume was proposed \cite[p.30]{kontsevitch-tschinkel}.

In the archimedean context R.~Berman and S.~Boucksom have introduced
and studied in \cite{BB10} a variant of the  
archimedean volume. 
For an ample line bundle, they introduce an energy functional on the space of continuous metrics. 
They prove  that the
 archimedean volume of two metrics agrees with the relative energy of the two
metrics (see \cite[Thm.~A]{BB10})
and that  
the energy satisfies a differentiability property
(see \cite[Thm.~B]{BB10}). 

{A variant of local volumes has been studied by
H.~Chen and C.~Maclean \cite{chen-maclean2015}.
They work over {$\C$ or over any non-archimedean field} 
and associate with $(L,\| \ \|, \| \ \|_0)$ a sequence of logarithmic
ratios between determinants of sup norms 
on  graded  linear systems associated with 
$L$. {Their main result implies conditions when the limsup in \eqref{eq na volumes intro} is a limit (see Remark \ref{conjecture}).}}

\subsection{Differentiability of non-archimedean volumes}  
\label{subsec: diff intro}
Let us now turn back to the non-archimedean situation and explain the
main results of this paper. 
We fix $K$ a  complete discretely valued field with
discrete valuation ring $K^{\circ}$ and $X$
a normal projective variety over $K$ equipped with  
{a} line bundle $L$.  
In this context, a non-archimedean analogue of a smooth hermitian
metric is an {\it algebraic metric} associated to 
 a $K^\circ$ model  $(\KX,\KL)$ of $(X,L)$. 
 The algebraic metric 
is called {\it semipositive} if $\KL|_{\KX_s}$ is nef. 
{A metric is called a {\it semipositive model metric} if a suitable positive tensor power is a semipositive algebraic metric.}
 We call $\metr$ on $\Lan$  a \emph{continuous semipositive metric} if it is a uniform limit
 of  semipositive model metrics.  
Such metrics were first considered by S. Zhang \cite{zhang-95}.  
A construction of A.~Chambert-Loir
\cite{chambert-loir-2006} gives an  associated Monge--Amp\`ere measure  $c_1(L, \| \ \|)^{\wedge n}$ on  $X^{\an}$ which is important for arithmetic equidistribution theorems. 
For details, we refer to Section \ref{section-semipositive-metrics}.

Given two continuous metrics $\metr_1, \metr_2$ on $L^{\an}$, 
we define 
$\vol(L,\metr_1, \metr_2)$ similarly as in \eqref{eq na volumes intro}. 
However, since fields such as $\C((t))$ are not locally compact, 
we use the length of the virtual $K^\circ$-module 
$\Hhat(X,L,\metr_1)/\Hhat(X,L,\metr_2)$ instead of
the quotient of the Haar measures (for details see
\S \ref{subsection non-archimedean volume}).

In Theorem \ref{cor3 energy} we prove a non-archimedean analogue of \cite[Thm.~A]{BB10}: 
\begin{theointro} \label{theo energy intro}
If $\| \ \|_1, \| \ \|_2$ are two continuous semipositive metrics on $L^{\an}$, 
then 
\begin{equation*}
\vol(L,\| \ \|_1, \| \ \|_2) = \frac{1}{n+1} \sum_{j=0}^n \int_{X^{\an}} -\log 
\frac{\| \ \|_1}{\| \ \|_2} \, c_1(L, \| \ \|_1)^{\wedge (n-j)} \wedge c_1(L,\| \ \|_2)^{\wedge j}.
\end{equation*}
\end{theointro}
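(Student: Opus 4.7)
The plan is to reduce by a double continuity argument to the case where both metrics are semipositive model metrics on a common model, and then identify both sides of the formula with the arithmetic intersection number $\tfrac{1}{n+1}(\KL_1^{n+1}-\KL_2^{n+1})$ on that model. For this reduction, I would first establish that both sides are continuous on pairs of continuous semipositive metrics in the topology of uniform convergence. For the left-hand side, an elementary Siu-type comparison
\begin{equation*}
\bigl|\vol(L,\metr_1,\metr_2)-\vol(L,\metr_1',\metr_2')\bigr|\le \deg_L(X)\,\bigl(\sup_{\Xan}|\log(\metr_1/\metr_1')|+\sup_{\Xan}|\log(\metr_2/\metr_2')|\bigr)
\end{equation*}
follows from the uniform asymptotic results in Section \ref{section asymptotic}. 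For the right-hand side, continuity of mixed Monge-Amp\`ere measures under uniform convergence of continuous semipositive metrics, due to Chambert-Loir, is already in the toolkit of Section \ref{section-semipositive-metrics}. Since by definition every continuous semipositive metric is a uniform limit of semipositive model metrics, the theorem reduces to the case where both $\metr_1,\metr_2$ are semipositive model metrics. After replacing $L$ by a suitable tensor power and dominating by a common model, one may further assume that $\metr_1,\metr_2$ are induced by line bundles $\KL_1,\KL_2$ on a single normal proper flat $K^\circ$-model $\KX$ of $X$, both extending the same power of $L$ and with restriction to $\KX_s$ nef.

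In this algebraic setting, set $\varphi\coloneqq-\log(\metr_1/\metr_2)$, so that the vertical Cartier divisor $\KL_1-\KL_2$ corresponds to the model function $\varphi$. The telescoping identity
\begin{equation*}
\KL_1^{n+1}-\KL_2^{n+1}=\sum_{j=0}^{n}\KL_1^{\,n-j}\cdot(\KL_1-\KL_2)\cdot\KL_2^{\,j}
\end{equation*}
of intersection numbers on the $(n+1)$-dimensional scheme $\KX$, combined with Chambert-Loir's formula expressing intersection against a vertical divisor as integration of its model function against the associated mixed Monge-Amp\`ere measure, rewrites the right-hand side of the theorem as $\tfrac{1}{n+1}(\KL_1^{n+1}-\KL_2^{n+1})$. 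For the left-hand side, $\Hhat(X,L^{\otimes m},\metr_i^{\otimes m})$ coincides up to bounded torsion with $\Gamma(\KX,\KL_i^{\otimes m})$, and the asymptotic Riemann-Roch formula for coherent sheaves on the projective $K^\circ$-scheme $\KX$ developed in Section \ref{section asymptotic}, together with the vanishing $h^q(\KX,\KL_i^{\otimes m})=o(m^{n+1})$ for $q>0$ coming via d\'evissage from the nefness of $\KL_i|_{\KX_s}$ and the characteristic-free Morse inequalities in Lazarsfeld's appendix, gives
\begin{equation*}
\length_{K^\circ}\Gamma(\KX,\KL_i^{\otimes m})=\frac{m^{n+1}}{(n+1)!}\,\KL_i^{\,n+1}+o(m^{n+1}).
\end{equation*}
Subtracting for $i=1,2$ and dividing by $m^{n+1}/n!$, the defining limit of $\vol(L,\metr_1,\metr_2)$ equals $\tfrac{1}{n+1}(\KL_1^{n+1}-\KL_2^{n+1})$, which by the telescoping above matches the right-hand side.

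The main obstacle is the uniform asymptotic control of $\length_{K^\circ}\Gamma(\KX,\KL_i^{\otimes m})$ and of the higher cohomology on a model $\KX$ that is typically singular, for line bundles that are nef only on the special fibre and not on the total space. This is precisely what forces the paper to develop the non-reduced, arbitrary-base asymptotic Riemann-Roch of Section \ref{section asymptotic} and to rely on the characteristic-free Morse inequalities from Lazarsfeld's appendix, since classical archimedean or smooth-variety arguments are insufficient. The uniform approximation step, although more standard, also rests on Siu-type bounds coming from the same package, so both halves of the argument depend on the asymptotic tools prepared earlier in the paper.
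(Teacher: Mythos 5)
Your reduction to semipositive model metrics via continuity, the appeal to Chambert--Loir's continuity of mixed measures, and the use of the nef condition plus the characteristic-free Morse inequalities to kill higher cohomology are all in line with the paper's proof of Theorem~\ref{cor3 energy} (via Proposition~\ref{prop energy 2'}). However, the central step of your argument has a genuine gap: the claimed asymptotic
\[
\length_{K^\circ}\Gamma(\KX,\KL_i^{\otimes m})=\frac{m^{n+1}}{(n+1)!}\,\KL_i^{\,n+1}+o(m^{n+1})
\]
is not well-defined on either side. On the left, $\Gamma(\KX,\KL_i^{\otimes m})$ is a \emph{free} $K^\circ$-module of rank $h^0(X,L^{\otimes m})$ (indeed, Lemma~\ref{sections and metrics} identifies it with the ball $\widehat H^0$ since $\KX$ is normal), so its $K^\circ$-length is infinite. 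On the right, $\KL_i^{\,n+1}$ would be an intersection of $n{+}1$ line bundles on the $(n{+}1)$-dimensional scheme $\KX$ proper over the one-dimensional base $\Spec K^\circ$; in the conventions of the paper (following Koll\'ar) such intersection numbers are defined only against cycles supported over a zero-dimensional subscheme of the base, so this number does not exist in a purely local setting. (In Arakelov theory $\widehat{c}_1(\overline L)^{n+1}$ exists, but only after an archimedean completion; locally over $K^\circ$ there is no absolute degree of a horizontal cycle.) Your telescoping identity $\KL_1^{n+1}-\KL_2^{n+1}=\sum_j \KL_1^{n-j}\cdot(\KL_1-\KL_2)\cdot\KL_2^j$ is only meaningful as the right-hand side, where $D:=\KL_1-\KL_2$ is a vertical divisor; the two individual terms on the left are not defined.

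The paper avoids this precisely by never forming an absolute intersection number: Proposition~\ref{prop energy 2'} introduces the interpolating sheaves $\Fcal_j^{(m)}=\Mcal\otimes\KL_1^{\otimes j}\otimes\KL_2^{\otimes(m-j)}$ and filters the finite-length quotient $\Gamma(\KX,\Fcal_m^{(m)})/\Gamma(\KX,\Fcal_0^{(m)})$ one step at a time. Each step $\Gamma(\Fcal_j^{(m)})/\Gamma(\Fcal_{j-1}^{(m)})$ is controlled, via the exact sequence built from the canonical section of $\Ocal(D)$ and the $H^1$-torsion estimate of Corollary~\ref{cor cohomology nef} (valid because $\Fcal_j^{(m)}|_D$ is supported over a zero-dimensional subscheme of $\Spec K^\circ$), and equals $\frac{m^n}{n!}\bigl(\tfrac{j}{m}D_1+(1-\tfrac{j}{m})D_2\bigr)^n\cdot D+o(m^n)$. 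The Riemann sum $\frac1m\sum_{j=1}^m(\cdots)\to\int_0^1(tD_1+(1-t)D_2)^n\cdot D\,dt$ then yields $\frac{1}{n+1}\sum_{k=0}^n D_1^k\cdot D_2^{n-k}\cdot D$, which is \eqref{-algebraic-definition-energy}. To repair your argument you would need to replace the putative Hilbert--Samuel formula for a single $\KL_i$ by this filtration/Riemann-sum device, and also handle the case of semipositive model metrics (not merely algebraic ones) by the arithmetic-progression trick used in the proof of Theorem~\ref{cor3 energy}.
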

{From the proof of this equation} we deduce that for continuous semipositive metrics the limsup
in the definition of $\vol(L,\| \ \|_1, \| \ \|_2) $ is actually a
limit. 
S.~Boucksom and D.~Eriksson  told us that they have a
proof of Theorem \ref{theo energy intro} using different methods\footnote{{See now \cite[Theorem 8.5]{boucksom-eriksson2018} which holds for any non-archimedean field $K$ under the additional assumptions that $X$ is smooth and $L$ is ample.}}.
Our proof is based on a study of non-archimedean volumes and on the 
results of Section \ref{section asymptotic}.

Our main result (following from Theorem \ref{corollary differentiability 3}) is the
\emph{differentiability of the non-archimedean volume}
over any discretely valued complete field $K$:
\begin{theointro}  \label{theo differentiability intro}
Let $\| \ \|$ be a continuous semipositive metric on $L^{\an}$ and $f\colon X^{\an} \to \R$ a continuous function. 
Then if we consider everything fixed except $\varepsilon \in \R $, one has 
\begin{equation} 
\label{eq differentiablity 1intro}
\vol(L,\| \ \|e^{-\varepsilon f}, \| \ \| ) {=} \\
\varepsilon  \int_\Xan f c_1(L,\| \ \|)^{\wedge n} + {o(\varepsilon)}
\end{equation}
for $\varepsilon \to 0$.
Equivalently the function
$t\in \R \mapsto \vol(L, \| \ \|e^{-tf}, \| \ \|)$ is
differentiable at $t=0$ and 
\begin{equation*}
\left. \frac{d}{dt}\right|_{t=0}  \vol(L, \| \ \|e^{-tf}, \| \ \|) =
\int_{X^{\an}} f c_1(L,\| \ \|)^{\wedge n}.
\end{equation*}
\end{theointro}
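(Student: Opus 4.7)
The plan is to deduce Theorem~B from Theorem~A by differentiating along a one-parameter family of continuous semipositive metrics, and then extending to arbitrary continuous $f$ via a Lipschitz estimate together with a density argument. First I would treat the case where $f = -\log(\|\cdot\|'/\|\cdot\|)$ for some continuous semipositive metric $\|\cdot\|'$ on $L^{\an}$. The geometric path $\|\cdot\|_\varepsilon \coloneqq \|\cdot\|\,e^{-\varepsilon f} = \|\cdot\|^{1-\varepsilon}(\|\cdot\|')^\varepsilon$ is continuous semipositive for every $\varepsilon \in [0,1]$, because the logarithms of continuous semipositive metrics form a convex set (this follows on the level of model metrics from the fact that nef classes form a convex cone, and then by uniform approximation). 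Applying Theorem~A to the pair $(\|\cdot\|_\varepsilon,\|\cdot\|)$ yields
\begin{equation*}
\vol(L,\|\cdot\|_\varepsilon,\|\cdot\|) = \frac{\varepsilon}{n+1}\sum_{j=0}^n \int_{X^{\an}} f\, c_1(L,\|\cdot\|_\varepsilon)^{\wedge(n-j)} \wedge c_1(L,\|\cdot\|)^{\wedge j}.
\end{equation*}
As $\varepsilon \to 0^+$, $\|\cdot\|_\varepsilon \to \|\cdot\|$ uniformly, so by Chambert--Loir's weak continuity of (mixed) Monge-Amp\`ere measures along uniformly convergent continuous semipositive metrics each mixed term converges weakly to $c_1(L,\|\cdot\|)^{\wedge n}$; continuity of $f$ then gives $\vol(L,\|\cdot\|_\varepsilon,\|\cdot\|) = \varepsilon \int_{X^{\an}} f\, c_1(L,\|\cdot\|)^{\wedge n} + o(\varepsilon)$, which is the theorem in this special case. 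The one-sided derivative from $\varepsilon \to 0^-$ follows by applying the same argument with $f$ replaced by $-f$ and $\|\cdot\|'$ suitably reinterpreted.

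To extend to arbitrary continuous $f$ I would next prove the uniform Lipschitz bound
\begin{equation*}
\bigl|\vol(L,\|\cdot\|e^{-g_1},\|\cdot\|) - \vol(L,\|\cdot\|e^{-g_2},\|\cdot\|)\bigr| \leq \|g_1-g_2\|_{\sup}\cdot\deg_L(X),
\end{equation*}
valid for all continuous $g_1,g_2 \colon X^{\an} \to \R$. This reduces by sandwiching and monotonicity to the case where $g_1-g_2$ equals a constant $c$, in which case rescaling the metric by $e^{c}$ shifts $\widehat{H}^0$ inside $H^0(X,L^{\otimes m})$ and changes the virtual $K^\circ$-module length by at most $mc\cdot h^0(X,L^{\otimes m})$; dividing by $m^{n+1}/n!$ and taking the limsup gives the claim. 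Combining this with Gubler's density theorem for model functions, and the fact that every model function is (after twisting by a sufficiently ample line bundle) a difference of two continuous semipositive model functions, shows that log-ratios of continuous semipositive metrics on $L^{\an}$ are uniformly dense in $C(X^{\an})$.

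The argument is then concluded by a standard $\delta$-$\varepsilon$ routine: given continuous $f$ and $\delta > 0$, pick $f_N$ of the form in the first step with $\|f-f_N\|_{\sup} < \delta$; the Lipschitz estimate gives $\vol(L,\|\cdot\|e^{-\varepsilon f},\|\cdot\|) = \vol(L,\|\cdot\|e^{-\varepsilon f_N},\|\cdot\|) + O(\varepsilon\delta)$, the first step gives $\vol(L,\|\cdot\|e^{-\varepsilon f_N},\|\cdot\|) = \varepsilon \int f_N\, c_1(L,\|\cdot\|)^{\wedge n} + o(\varepsilon)$, and $|\int (f-f_N)\,c_1(L,\|\cdot\|)^{\wedge n}| \le \delta\cdot\deg_L(X)$; sending first $\varepsilon \to 0$ and then $\delta \to 0$ proves the theorem. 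The main obstacle I foresee is the density step combined with the Lipschitz bound when $L$ is not ample: writing arbitrary continuous functions as uniform limits of log-ratios of continuous semipositive metrics on $L^{\an}$ requires a careful ample-twist reduction and exploiting linearity of both $\vol$ and the Monge-Amp\`ere integral in the metric direction. A second technical prerequisite is the weak continuity of mixed Monge-Amp\`ere measures along uniformly convergent continuous semipositive metrics in the full generality of a normal projective variety over an arbitrary complete discretely valued field, which must be in place before the first step can even be carried out.
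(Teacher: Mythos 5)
Your proposal has a genuine gap at the density step, and it is precisely the gap that the paper's machinery (the filtration argument plus the holomorphic Morse inequalities) was built to cross.

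In your first step you apply Theorem~A along the segment $\metr_\varepsilon = \metr\,e^{-\varepsilon f}$, which requires $\metr_\varepsilon$ to be continuous semipositive for small $\varepsilon>0$; equivalently, $\metr\,e^{-f}$ must itself be continuous semipositive, i.e.\ $f$ must be $\theta$-psh for $\theta=c_1(L,\metr)$. Your density step, however, shows something different: that functions of the form $-\log(\metr_1/\metr_2)$ with \emph{both} $\metr_1$ and $\metr_2$ arbitrary continuous semipositive metrics on $L^{\an}$ are dense in $C^0(X^{\an})$. That class is much larger: it contains every model function (write $\Ocal(kf)=\Ocal(kM_1-kM_2)$ with $M_1,M_2$ vertical nef, as in \ref{setup for nef decomposition}), whereas the $\theta$-psh class to which Theorem~A applies does not. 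The set of continuous $\theta$-psh functions is \emph{not} dense in $C^0(X^{\an})$. A concrete obstruction: take $X=\Pbb^1_K$, $L=\Ocal(1)$, and $\metr=\metr_{\KL_0}$ for the standard nef model $\KL_0$ on $\Pbb^1_{\Ko}$. On the blow-up $\KX$ of a closed point of the special fibre, with exceptional curve $E_1$ and strict transform $E_2$, the pullback $\KL$ of $\KL_0$ has $\deg_{E_1}\KL=0$, so every $\Z$-model function $g$ determined by a vertical divisor $aE_1+bE_2$ with $\KL(g)$ nef must satisfy $g(x_{E_2})-g(x_{E_1})=b-a\ge 0$; but the model function $f=-\varphi_{E_2}$ has $f(x_{E_2})-f(x_{E_1})=-1$, and no continuous $\theta$-psh $g$ can be within distance $1/2$ of $f$. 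Consequently the $\delta$-$\varepsilon$ routine you describe cannot find an $f_N$ \emph{of the form required by your first step} within $\delta$ of a general $f$, and the argument breaks.

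This is exactly why Theorem~B is strictly harder than Theorem~A and cannot be deduced from it by differentiating inside the semipositive cone and invoking continuity: the derivative in a direction that leaves the cone is precisely what has to be computed, and for that extra input is indispensable. The paper's proof instead works with a fixed normal model $\KX$, picks a vertical decomposition $\Ocal(kf)=\Ocal(kM_1-kM_2)$, builds the filtration $\Fcal_{j,m}$ interpolating between $\Gamma(\KX,\Mcal\otimes\KL^{\otimes q})$ and $\Gamma(\KX,\Mcal\otimes\KL(kf)^{\otimes q})$ as in \eqref{filtration argument}, and estimates each graded piece via the long exact sequence \eqref{equation long exact sequence s}. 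Inside the nef cone one could neglect the $H^1$ term, but outside one cannot; Lemma \ref{lemma first} and Corollary \ref{cor first} control $H^1$ using the asymptotic cohomological functions and the holomorphic Morse inequalities (Theorem \ref{theo holomorphic morse by Lazarsfeld}), and Lemma \ref{lemma zeroth} gives the matching estimate for $H^0$ on the vertical divisor $E$, with error terms proportional to $\delta_D(M_1,M_2)$, which is $O(\varepsilon)$ after the rescaling $M_i\mapsto\varepsilon M_i$. Your observation (via Proposition \ref{theorem inequality 1}) that the lower bound in \eqref{eq differentiablity 1intro} can be proved without Morse inequalities is correct and is noted in Remark \ref{remark differentiability inequality}; but the matching upper bound, which makes the limit exist, needs the extra cohomological control. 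The surrounding pieces of your plan (the Lipschitz bound from Proposition \ref{lemma continuity non-archimedean volume}, the density of model functions, the weak continuity of Chambert-Loir measures, reduction of $\varepsilon<0$ to $\varepsilon>0$ with $-f$) are fine; it is the identification of the correct dense class — and the recognition that none of the classes dense in $C^0(X^\an)$ stays inside the domain of Theorem~A — that fails.
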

This formula is the exact non-archimedean analogue of \cite[Thm.~B]{BB10}, 
and was proposed 
by M. Kontsevich and
Y. Tschinkel \cite[\S 7.2]{kontsevitch-tschinkel}. 

Section \ref{section differentiability} is devoted to the proof of 
Theorem \ref{theo differentiability intro}. The proof of Theorem \ref{theo differentiability intro} 
is similar to the proof of Theorem \ref{theo energy intro}, but
additional problems arise from leaving the nef cone.

Our arguments were inspired by the techniques of A. Abbes and T. Bouche
\cite{abbes-bouche} and X. Yuan \cite{Yua08}. 
In fact the differentiability of the non-archimedean volume 
in Theorem \ref{theo differentiability intro}
is related to the differentiability of the 
$\chi$-arithmetic volume 
shown by Yuan (see \cite{Yua08} and \cite[\S 4.4]{chen-lms}) as follows.
{If $K$ is a completion of a number field $F$ at a non-archimedean
place, if $X,L$ are defined over $F$ and if $L$ is ample, then Yuan proves differentiability of the $\chi$-arithmetic volume. Using the relation between the $\chi$-arithmetic
and the non-archimedean volume explained in Remark \ref{number field case}, this implies Theorem  \ref{theo differentiability intro} under the above assumptions on $X$ and $L$.} 
Conversely Theorem \ref{theo differentiability intro} implies the differentiability of the 
$\chi$-arithmetic volume in the direction of a 
non-archimedean metric change.

{The proof of the differentiability of the arithmetic volume in
  the global case can be made in two
  steps. The  first one is to prove only the  inequality ``$\geq$'' in equation
  \eqref{eq differentiablity 1intro} for each place of the global
  field (\cite[Lemma 3.3]{Yua08} and its non-archimedean analogue). In
  the non-archimedean situation this inequality is obtained by
  controlling the size of certain groups of global sections. The second
  step is to prove that the arithmetic volume is log concave (see
  \cite[Theorem B]{yuan-2009}). As explained in \cite[4.1]{chen-lms}
  these two ingredients are enough to prove the differentiability}\footnote{{A referee suggested that a similar strategy might be used in
  the local non-archimedean case. First one proves the inequality ``$\geq$'' in
  equation \eqref{eq differentiablity 1intro} by controlling the size
  of  certain $H^{0}$ groups. Second, using Okounkov 
  bodies  as explained in \cite[proof of Theorem 4.5]{chen-maclean2015},
one writes $\vol(L,\| \ \|_{1}, \| \ \|_{2} )$ as the difference of
two log concave quantities, one depending on $\|\ \|_{1}$ and the other on
$\|\ \|_{2}$. This decomposition  will depend on the
choice of a regular $K$-rational point (whose existence is assumed),  
a system of parameters around that point and a monomial order. 
}}.

{In the {non-archimedean} local case we use a different strategy.  
Instead of proving only the
  inequality ``$\geq$'' in equation \eqref{eq differentiablity
    1intro}, we prove directly the full equality \eqref{eq differentiablity
    1intro}, To this end, instead of controlling only the size of
  certain $H^{0}$-groups, we need to control also the size of certain
  first cohomology groups.  This control is achieved through the use
  of the holomorphic Morse  
inequalities and the results on the asymptotic growth 
of algebraic volumes obtained in 
Section \ref{section asymptotic}.  
}

\subsection{Orthogonality and Monge--Amp\`ere equations} 
\label{subsec: OP and MA intro}
We keep the assumptions on $K$ from \S \ref{subsec: diff intro}.
Although we are able to establish the differentiability of the local
non-archimedean volume in arbitrary characteristic, this is not 
yet enough
to solve the non-archimedean Monge--Amp\`ere equation.  
One  important ingredient which is still missing is
the
existence of the continuous semipositive envelope 
$P(\metr)$ for an arbitrary continuous metric $\metr$
on a line bundle $L^{\rm an}$. Given a
continuous metric $\metr$, one defines its \emph{semipositive envelope}  
$P(\metr)$ as the pointwise infimum 
of all metrics $\metr_1$ on $L^{\rm an}$ such that $\metr_1$ 
is a {semipositive model} 
metric on $L^\an$ with $\metr\leq \metr_1$.
It is a priori not clear that  $P(\metr)$ 
is a continuous
semipositive metric on $\Lan$.

From now on, we assume that the characteristic of the residue field $\Kt$ of $K$ is zero and that $L$ is ample. Then the regularization theorem of S.  
Boucksom, C. Favre and M. Jonsson \cite[Thm.~8.3]{BFJ1} ensures that $P(\metr)$ is a continuous semipositive metric.
Using a local approach to semipositivity as in \cite{gubler-kuennemann2, gubler-martin}, we find  $\hH^0(X,L,\metr) = \hH^0(X,L,P(\metr))$ for any continuous metric $\metr$ and 
its semipositive envelope $P(\metr)$, hence  
\begin{equation} \label{volume vanishing}
 \vol(\metr, P(\metr)) = 0. 
\end{equation}

In Corollary  \ref{cor 4 energy}, we will deduce from \eqref{volume vanishing} that 
 the $\limsup$ in the definition of the 
non-archimedean volume is a $\lim$.
Theorem \ref{theo differentiability intro} and \eqref{volume vanishing} yield the {\it orthogonality property}:
\begin{theointro} 
\label{theorem orthogonality intro}
We assume ${\rm char}(\Kt)=0$. Let $L$ be an ample line bundle on a smooth projective variety $X$ over $K$, let $n \coloneqq \dim(X)$, and 
let $\metr$ be a 
continuous metric on $\Lan$. 
Then  
\begin{displaymath}
\int_{X^{\an}}\log \frac {P(\metr)} {\metr} c_1(L, P(\metr))^{\wedge n} =0.
\end{displaymath} 
\end{theointro}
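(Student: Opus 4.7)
The plan is to realize the orthogonality integral as $g'(0)$ for a single function $g\colon [0,1]\to\mathbb{R}$ and then show that $g$ vanishes identically on $[0,1]$, which forces $g'(0)=0$.

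Set $f\coloneqq \log\frac{P(\metr)}{\metr}$. Since $P(\metr)$ is defined as an infimum of semipositive model metrics dominating $\metr$, we have $P(\metr)\ge \metr$ pointwise, hence $f\ge 0$. The regularization theorem of Boucksom--Favre--Jonsson recalled in \S\ref{subsec: OP and MA intro} ensures that $P(\metr)$ is a continuous semipositive metric (here one uses $\mathrm{char}(\Kt)=0$ and ampleness of $L$), so $f$ is a continuous function on $X^{\an}$. Define the one-parameter family of continuous metrics
\[
\metr_t \coloneqq P(\metr)\, e^{-t f}, \qquad t\in\mathbb{R},
\]
so that $\metr_0=P(\metr)$, $\metr_1=\metr$, and for every $t\in[0,1]$ one has $\metr\le \metr_t\le P(\metr)$ and, passing to tensor powers, $\metr^{\otimes m}\le \metr_t^{\otimes m}\le P(\metr)^{\otimes m}$ for every $m\ge 1$.

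The key point is that all three modules in the resulting chain
\[
\Hhat\bigl(X,L^{\otimes m},P(\metr)^{\otimes m}\bigr)\subseteq \Hhat\bigl(X,L^{\otimes m},\metr_t^{\otimes m}\bigr)\subseteq \Hhat\bigl(X,L^{\otimes m},\metr^{\otimes m}\bigr)
\]
must coincide. Indeed, the identity $\Hhat(X,L',\metr')=\Hhat(X,L',P(\metr'))$ obtained in \cite{gubler-kuennemann2, gubler-martin} (and quoted in \S\ref{subsec: OP and MA intro}), applied to $L'=L^{\otimes m}$ and $\metr'=\metr^{\otimes m}$, identifies the rightmost module with $\Hhat(X,L^{\otimes m},P(\metr^{\otimes m}))$, which in turn equals $\Hhat(X,L^{\otimes m},P(\metr)^{\otimes m})$ via the compatibility $P(\metr^{\otimes m})=P(\metr)^{\otimes m}$. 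The latter is a direct consequence of the definition of a semipositive model metric being stable under positive tensor powers and under taking $m$-th roots. Since the virtual $K^\circ$-module underlying the non-archimedean volume is therefore trivial at every level, we obtain
\[
g(t)\coloneqq \vol\bigl(L,\metr_t,P(\metr)\bigr)=0 \qquad \text{for all } t\in[0,1],
\]
which is the natural strengthening of \eqref{volume vanishing} to the whole interpolating family.

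To finish, apply Theorem \ref{theo differentiability intro} with base metric $P(\metr)$ (continuous semipositive, as required) and perturbation $f$:
\[
g'(0)=\int_{X^{\an}} f\, c_1\bigl(L,P(\metr)\bigr)^{\wedge n} = \int_{X^{\an}} \log\frac{P(\metr)}{\metr}\, c_1\bigl(L,P(\metr)\bigr)^{\wedge n}.
\]
Since $g\equiv 0$ on $[0,1]$, the derivative $g'(0)$ vanishes, which is exactly the desired orthogonality. The main obstacle of the plan is not the final differentiation but the compatibility $P(\metr^{\otimes m})=P(\metr)^{\otimes m}$ and the envelope identity $\Hhat(\metr)=\Hhat(P(\metr))$ of \cite{gubler-kuennemann2, gubler-martin}; once these are in place the orthogonality falls out by a one-line application of Theorem \ref{theo differentiability intro}.
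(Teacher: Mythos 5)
Your proof is correct and takes essentially the same route as the paper. The paper's proof of Theorem \ref{theo ortho new} observes directly that $P(P(\metr)e^{-\varepsilon\varphi})=P(\metr)$ for $\varepsilon\in[0,1]$ and then cites Proposition \ref{prop vol envelope} to get $\vol(P(\metr)e^{-\varepsilon\varphi},P(\metr))=0$, whereas you unfold that proposition by tracking the explicit chain of modules $\Hhat$ at each tensor level via $\Hhat(\metr)=\Hhat(P(\metr))$ and $P(\metr^{\otimes m})=P(\metr)^{\otimes m}$ (Remark \ref{homogenity of envelope}); both routes then conclude identically by differentiating at $t=0$ with Theorem \ref{corollary differentiability 3}.
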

We show this in Theorem \ref{theo ortho new}. This orthogonality property was proven in \cite[Thm.~A.6]{BFJ2} assuming that $X$ satisfies the {algebraicity} condition $(\dagger)$ mentioned in \S \ref{subsec: MA}.    
It follows from the variational method of S.~Boucksom, C.~Favre and M.~Jonsson that the orthogonality property  yields the existence of 
solutions in the non-archimedean Calabi--Yau problem (see \cite[Thm.~8.2]{BFJ2}) and hence Theorem \ref{theorem orthogonality intro} implies:

\begin{theointro} \label{theo intro monge ampere}
We assume ${\rm char}(\Kt)=0$ and that $L$ is an ample line bundle on the smooth projective variety $X$ over $K$. Let $\mu$ be a positive Radon measure on $\Xan$ with $\mu(\Xan)=\deg_L(X)$ and supported on the dual complex of an SNC model of $X$. Then there is a continuous semipositive metric $\metr$ on $\Lan$ with $c_1(L,\metr)^{\wedge n} = \mu$. 
\end{theointro}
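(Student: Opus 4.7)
The plan is to deduce this theorem directly from Theorem C by invoking [BFJ2, Thm.~8.2]. That result of Boucksom-Favre-Jonsson is a conditional existence statement: in residue characteristic zero and for $L$ ample, the Monge-Amp\`ere equation $c_1(L, \metr)^{\wedge n} = \mu$ admits a continuous semipositive solution for every positive Radon measure $\mu$ of the correct total mass supported on an SNC dual complex, \emph{provided} the orthogonality property holds for all continuous metrics. In [BFJ2], the orthogonality [BFJ2, Thm.~A.6] was proven only under the algebraicity hypothesis $(\dagger)$, which is precisely what forced the main Monge-Amp\`ere theorem of [BFJ2] to require $(\dagger)$. Our Theorem C supplies the orthogonality locally, with no algebraicity assumption, so one reads off Theorem D by feeding Theorem C into [BFJ2, Thm.~8.2].

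Before citing that conditional existence result, I would check that its structural hypotheses are in force. The regularization theorem [BFJ1, Thm.~8.3], valid in residue characteristic zero, ensures that for any continuous metric $\metr$ the semipositive envelope $P(\metr)$ is continuous and semipositive; this is the input already exploited in \S 1.6 to derive $\vol(\metr, P(\metr)) = 0$. The construction of the Monge-Amp\`ere measure $c_1(L,\metr)^{\wedge n}$, the energy functional on the space of finite-energy semipositive metrics, and its upper semicontinuity and compactness properties are developed in [BFJ2, \S 3-6] without reference to $(\dagger)$, so they apply verbatim in our setting.

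The variational strategy of [BFJ2, \S 7-8] then maximizes the functional
\begin{displaymath}
F_\mu(\metr) = E(\metr) - \int_{X^{\an}} \log(\metr/\metr_0)\, d\mu
\end{displaymath}
over this class of metrics. A maximizer $\metr$ exists by compactness modulo scaling. To conclude that $c_1(L,\metr)^{\wedge n} = \mu$, one varies $\metr \leadsto P(\metr e^{-tf})$ for $f \in C^0(X^{\an})$ and differentiates $F_\mu$ at $t=0$; the variation of $E$ through the envelope $P$ is controlled by the orthogonality property, which guarantees that the derivative only sees the infinitesimal change in $f$ and not the truncation performed by $P$. Together with the differentiability of the non-archimedean volume (Theorem B) this produces the identity $\int f\, d\mu = \int f\, c_1(L,\metr)^{\wedge n}$ for all continuous $f$, hence the Monge-Amp\`ere equation.

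The main obstacle, the orthogonality property outside the algebraic case, has already been removed earlier in the paper: Theorem B (differentiability without $(\dagger)$) combined with the envelope vanishing $\vol(\metr, P(\metr)) = 0$ yields Theorem C, and Theorem C is exactly the hypothesis needed to close the loop in the BFJ variational argument. The remaining step, namely plugging Theorem C into [BFJ2, Thm.~8.2], is formal.
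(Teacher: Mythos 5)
Your proposal follows exactly the route taken in the paper: Theorem C supplies the orthogonality property without $(\dagger)$, and one then invokes the conditional existence result of Boucksom--Favre--Jonsson, after checking that the rest of the variational apparatus in \cite{BFJ2} (regularization of envelopes via \cite[Thm.~8.3]{BFJ1}, energy, compactness modulo scaling) does not depend on $(\dagger)$. The paper implements this via the intermediate Theorem~\ref{BFJ corollary 1}, which merely translates Theorem C into the $\theta$-psh/$\MA_\theta$ language that \cite[Thm.~8.1]{BFJ2} is stated in; your sketch compresses that translation into the phrase ``reads off Theorem D by feeding Theorem C into [BFJ2, Thm.~8.2],'' which is fine. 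One small imprecision: in the final differentiation step of the variational argument it is Theorem C (equivalently, full differentiability of $E\circ P_\theta$ in all directions, per \cite[Rem.~7.4]{BFJ2} and Remark~\ref{remark diff eq or}) that does the work, not Theorem B --- Theorem B only gives differentiability of $t\mapsto E\circ P_\theta(tf)$ at $t=0$, which is strictly weaker and is the \emph{input} to proving Theorem C, not an independently needed hypothesis for the BFJ closing argument. This does not affect the correctness of your overall chain of implications.
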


Here, {an {\it SNC model} is a regular} projective variety $\KX$ over the valuation ring $\Ko$ with generic fiber $X$ such that the  special fiber, which is not assumed to be reduced, agrees as a closed subset  with a simple normal crossing divisor $D$ of $\KX$. The {\it dual complex} $\Delta_\KX$ of $\KX$ is defined as the dual complex of $D$ and can be realized as a canonical compact subset of $\Xan$ (see \cite[\S 3]{BFJ1} for details).

Recall that uniqueness up to scaling was proven by X.~Yuan and 
S.~Zhang \cite[Cor.~1.2]{yuan-zhang} without any assumptions on the residue characteristic. For a more general existence result in terms of plurisubharmonic functions,  we refer to Corollary \ref{BFJ corollary 2}.

\subsection{{Acknowledgements}}
We are deeply grateful to {S\'ebastien} Boucksom and Henri Guenancia for numerous discussions and {precious advice}. 
We are very thankful to Robert Lazarsfeld for providing us with Appendix A.
We  thank Olivier Benoist, Steven Dale Cutkovsky, {Charles Favre}, {Mattias Jonsson}, Alex K\"uronya, {Matthias Nickel}, Mihnae Popa, Aur\'elien Rodriguez and Mart\'in Sombra for helpful discussions
and the referees for their useful remarks.

\bigskip

\begin{center}
\textsc{Notation and conventions.}
\end{center}

\smallskip

Let $X$ be a scheme. A \emph{divisor} on $X$ is always a Cartier divisor on $X$.
We denote by ${\rm Div}(X)$ the group of Cartier divisors on $X$ and
put ${\rm Div}(X)_\Q={\rm Div}(X)\otimes_\Z\Q$ and
${\rm Div}(X)_\R={\rm Div}(X)\otimes_\Z\R$.

Let $k$ be a field. A \emph{variety $X$ over $k$} is an integral $k$-scheme $X$
which is separated and of finite type. 
A curve is a variety of dimension one.
For $X$ a variety and $D$ a Cartier divisor on $X$ we will 
sometimes write $h^i(D)$ or $h^i(X, D)$ for $h^i(X, \Ocal_X(D))$. 
We also write $H^i(X,D)$ for $H^i(X,\Ocal_{{X}}(D){)}$.
If $\Fcal$ is a coherent sheaf on a scheme $X$ and $D\in \textrm{Div}(X)$
 we write $\Fcal(D)$ for $\Fcal \otimes_{\Ocal_X} \Ocal_X(D)$.

Let $M$ be a module over a commutative ring $A$ with unit.
Then $\ell_A(M)$ denotes the length of the $A$-module $M$. We write
$\ell(M)$ if $A$ is clear from the context.

Let $X$ be a noetherian scheme over a noetherian base scheme $S$. For an $n$-cycle $Z$ on $X$ with support proper over a zero-dimensional subscheme of $S$ and line bundles 
$L_1,\ldots,L_n$ on $X$, there is an 
intersection number
$L_1\cdots L_n\cdot Z\in \Z$.
A definition of such intersection numbers is given in  \cite[Appendix VI.2]{Kol96}  for coherent sheaves $\Fcal$ instead of $Z$, hence we may apply it for $\Fcal \coloneqq \Ocal_Z$ in case of a prime cycle and we extend it by linearity to all cycles of the above form. These intersection numbers are multilinear and satisfy a projection formula, hence they agree with the usual intersection numbers as given in \cite{Ful98} in case of $S=\Spec(R)$ with $R$ a field or a discrete valuation ring. Indeed, functoriality and multilinearity yields that this can be checked for a prime cycle in projective space over a field and hence it follows easily from \cite[Thm.~2.8]{Kol96}.

If $L_i=\Ocal(D_i)$ for Cartier divisors $D_1,\ldots, D_n$ on $X$, then we set 
\begin{equation} \label{intersection product for CD}
D_1\cdots D_n\cdot Z=\Ocal (D_1)\dots \Ocal(D_n)\cdot Z.
\end{equation}
This is a multilinear and symmetric in $D_1, \dots, D_n$. If $Z$ is the fundamental cycle of $X$, then we simply write $D_1 \cdots D_n$ for the intersection product in \eqref{intersection product for CD}.

If $\{M_1,\dots,M_s\}=\{L_1,\dots,L_n\}$, then we write  $M_1^{n_1} \cdots M_s^{n_s} \cdot Z  \coloneqq L_1\cdots L_n\cdot Z$ if $M_j$ occurs $n_j$-times in the  intersection number. We will always use $M_j^{n_j}$ in this way which should not be mixed up with the tensor power $M^{\otimes n}$ of a line bundle $M$.

 \section{Preliminaries on semipositive metrics, {envelopes} and measures}
\label{section-semipositive-metrics}

The aim of this section is to recall the central notions for our paper following the terminology in \cite{BFJ1,BFJ2}.
In this section,  
let $K$ be a complete discretely valued field with valuation ring
$K^\circ$, uniformizer $\pi$, and residue class field $\tilde K=K^\circ/(\pi)$. 
We normalize the absolute value on $K$ in such a way that 
$-\log|\pi|=1$.

\subsection{Models, analytification and {reduction}}
\label{modelanared}

Let $X$ be a proper variety over $K$. Let $S={\rm Spec}\,K^\circ$.
A \emph{model of $X$} is a proper, flat scheme $\KX$ over $S$
together with a fixed isomorphism $h$ between 
$X$ and the generic fibre $\KX_\eta$ of the $S$-scheme $\KX$.
Usually we read $h$ as an identification.
The special fibre $\KX\otimes_{K^\circ} \tilde K$ of $\KX$ over $S$ is denoted by $\KX_s$.

Let $X$ be a variety over $K$. 
We denote by $X^\an$ the analytification of $X$ over $K$ in the
sense of Berkovich \cite[Thm.~3.4.1]{berkovich-book}. 
The $K$-analytic space $X^\an$ consists of a locally compact Hausdorff
topological space together with a sheaf $\KO_{X^\an}$
of regular analytic functions.
The space $X^\an$ is compact if $X$ is proper over $K$.

Let $X$ be a proper variety over $K$.
For a model $\KX$ of $X$ over $K^\circ$
with special fibre $\KX_s$ 
there is a canonical \emph{reduction map}
$\red\colon X^\an\longrightarrow \KX_s$
which is surjective. 
If the model $\KX$ is normal then
for an irreducible component $V$ of $\KX_s$,
its generic point $\xi_V$  has a unique
preimage $x_V$ in $X^\an$ \cite[Prop. 1.3.3]{BurgosPhilipponSombra} called the
\emph{divisorial point determined by $V$}.

\subsection{Metrics, model metrics and model functions}
\label{metrmodelfunct}

In this subsection, we study metrics on a line bundle $L$ of a proper variety $X$ over $K$.

\begin{art} \label{continuous metrics}
A \emph{continuous metric} $\metric$ on $L^\an$ associates with each section
$s\in \Gamma(U,L)$ on some Zariski open
subset $U$ of $X$ a continuous function
$\|s\|\colon U^\an\rightarrow [0,\infty)$
such that $\|f\cdot s\|=|f|\cdot \|s\|$ holds for
each $f\in \KO_X(U)$.
We further require that $\|s\| > 0$ if $s$ is an invertible section of $L$. 
Given a continuous metric $\metr$ on $L^\an$, we define
\begin{equation} \label{equation H zero hat}
\widehat H^0(X,L,\metr)  \coloneqq \bigl\{s\in H^0(X,L)\,\big|\,\|s(p)\|
\leq 1\mbox{ for all }p\in X^\an \bigr\}.
\end{equation}
{
Observe that $\widehat H^0(X,L,\metr)$ is a free $K^\circ$-module of rank $r \coloneqq \dim_KH^0(X,L)$.
To see this, pick a $K$-basis $s_1,\ldots,s_r$ of $H^0(X,L)$ and remark that for an integer $\alpha >0$ big enough, 
$\pi^\alpha\langle s_1 \ldots s_r\rangle_{K^\circ} \subseteq 
\widehat H^0(X,L,\metr)   \subseteq 
\pi^{-\alpha}\langle s_1\ldots s_r\rangle_{K^\circ}$
as two vector space norms on $H^0(X,L)$ are equivalent
\cite[App.~A Thm.~1]{bosch-lectures}.
}

Given a continuous reference metric $\metr_0$ on $L^\an$, any other continuous
metric on $L^\an$ is of the form $\metr=\metr_0 \,e^{-\varphi}$ for some
$\varphi\in C^0(X^\an)$.
We obtain the class of \emph{singular metrics on $L^\an$}
if we allow arbitrary functions $\varphi\colon X^\an\to \R\cup\{-\infty\}$.
\end{art}

\begin{art} \label{distance of metrics}
The space of continuous metrics on $L^\an$ is a metric space for the distance
\begin{equation}\label{distance-for-metrics}
d\bigl(\metr_1,\metr_2\bigr)=\sup_{X^\an}\,\Bigl|\log\frac{\metr_1}{\metr_2}\Bigr|.
\end{equation} 
Convergence for this distance is called \emph{uniform convergence of metrics on $L^\an$}.
\end{art}

\begin{art} \label{models}
Let $L$ be a line bundle on the proper variety $X$.
A \emph{model of $(X,L)$}
or briefly a \emph{model of $L$} consists of a model
$(\KX,h)$ of $X$ together with a line bundle $\KL$ on
$\KX$ and an isomorphism $h'$ between $L$ and $h^*(\KL|_{\KX_\eta})$.
Usually we read $h'$ as an identification.

Let $(\KX,\KL)$ be a model of $(X,L^{\otimes m})$
for some $m\in \N_{>0}$. 
There is a unique metric $\metric_\KL$
on $L^\an$ over $X^\an$ such that the following holds:
Given a frame $t$ of $\KL$ over some open subset $\KU$ of
$\KX$ and a section $s$ of $L$ over $U=X\cap \KU$ such that
$s^{\otimes m}=ht$ for some regular function $h$ on $U$, we have 
$\|s\|=\sqrt[m]{|h|}$ 
on $U^\an\cap \red^{-1}(\KU_s)$.
{Such a metric on $L^\an$  is called a 
\emph{model metric (determined on $\KX$}).}
A model metric is called \emph{algebraic} if we can choose
$m=1$ in the construction above.
Note that model metrics are continuous.
\end{art}

\begin{lemma} \label{sections and metrics} \label{lemma volume models}
Let $X$ be a normal proper variety over $K$ and 
$\KX$ a normal model of $X$.
{For a   model $\KL$ of $L$ over $\KX$, we have} 
$
\Gamma(\KX,\KL) = \widehat{H}^0(X,L,\metr_\KL)$.
\end{lemma}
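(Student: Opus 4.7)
The plan is to verify the two inclusions separately, working locally on a trivialization of $\KL$.

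First I would show $\Gamma(\KX,\KL)\subseteq \widehat H^0(X,L,\metr_\KL)$. Given $s\in \Gamma(\KX,\KL)$, restrict $s$ to the generic fibre to obtain a section of $L$ on $X$; this restriction is injective because $\KX$ is flat over $K^\circ$. To bound $\|s\|_\KL$ pointwise, fix $p\in X^\an$ and choose a trivializing open $\KU\subseteq \KX$ with frame $t$ of $\KL$ such that $\red(p)\in \KU_s$. Write $s|_\KU=h\cdot t$ with $h\in \Ocal_\KX(\KU)$; by the definition of the model metric recalled in \S\ref{metrmodelfunct} we have $\|s(p)\|_\KL=|h(p)|$. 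Since $h$ is regular on the model $\KU$ and $p$ reduces to $\KU_s$, the standard inequality $|h(p)|\le 1$ for analytic evaluation of integral functions at points reducing to the model yields $\|s(p)\|_\KL\le 1$.

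For the reverse inclusion, start with $s\in \widehat H^0(X,L,\metr_\KL)$, viewed as a rational section of $\KL$ on $\KX$ via the identification of generic fibres. I want to show that $s$ extends to a regular section on all of $\KX$. Choose a covering of $\KX$ by affine opens $\KU$ on which $\KL$ is trivialized by a frame $t$; writing $s|_U=h\cdot t$ with $h$ \emph{a priori} only rational on $\KU$, it suffices to prove that $h\in \Ocal_\KX(\KU)$ for each such $\KU$. Here I will use the assumption that $\KX$ (hence $\KU$) is normal: by the algebraic Hartogs lemma on a normal noetherian scheme, it is enough to check that $h$ has non-negative order at every codimension one point of $\KU$. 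At codimension one points lying in the generic fibre $X$, regularity is automatic since $s$ is a genuine section of $L$ on $X$. The essential step is therefore the generic points of the irreducible components $V$ of $\KU_s$.

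For each such $V$, invoke the existence of the divisorial point $x_V\in X^\an$ from \S\ref{modelanared}; the key computation is that $\|s(x_V)\|_\KL=|h(x_V)|=\exp(-\ord_V(h))$, where $\ord_V$ denotes the normalized order of vanishing at the discrete valuation ring $\Ocal_{\KX,\xi_V}$ (which is a DVR because $\KX$ is normal and $V$ has codimension one). The hypothesis $\|s\|_\KL\le 1$ at $x_V$ therefore forces $\ord_V(h)\ge 0$, i.e.\ $h$ is regular at $\xi_V$. Combined with regularity in codimension one along the generic fibre and Hartogs, this proves $h\in \Ocal_\KX(\KU)$, hence $s\in \Gamma(\KX,\KL)$. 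The main technical point is the identification $|h(x_V)|=\exp(-\ord_V(h))$ for divisorial points; once that is in hand the rest is a routine combination of the model-metric definition with normality.
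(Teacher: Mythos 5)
Your proof is correct and takes essentially the same approach as the paper: the forward inclusion is elementary, and the reverse inclusion uses normality of $\KX$ to reduce to checking non-negative order at codimension-one points, with the only vertical ones being generic points of components of $\KX_s$, where the divisorial point $x_V$ and the identity $-\log|h(x_V)|=v_V(h)$ do the work. (The paper phrases this globally as effectivity of the Weil divisor of the meromorphic extension $\widetilde s$, while you run the same computation locally via Hartogs; these are the same argument.)
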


\begin{proof}
The inclusion $\subseteq$ is obvious. 
Note that every $s \in \Gamma(X,L)$ extends uniquely to a meromorphic section $\widetilde{s}$ of $\KL$. 
It remains to show that $\| s \|_\KL \leq 1$ yields that $\widetilde{s}$ is a global section of $\KL$. 
Since $\KX$ is normal, it is equivalent to show that the 
Weil divisor associated to $\widetilde{s}$ is effective. 
Let $\xi_i$ be the generic point of the irreducible component 
$E_i$ of the special fiber $\KX_s$. 
The local ring $\Ocal_{\KX,\xi_i}$ is a valuation ring 
and we may normalize the corresponding valuation $v_i$ 
such that it extends the given valuation $v$ on $K$. 
Then the multiplicity of the Weil divisor associated to  
$D \coloneqq \div(\widetilde{s})$ in $E_i$ is equal to 
$v_i(\gamma_i)$, where $\gamma_i$ is a local equation of $D$ in $\xi_i$.
Let $x_i$ be the divisorial point of  $\Xan$ corresponding to $E_i$. 
Then it is clear from our assumptions that 
$v_i(\gamma_i)=-\log|\gamma_i(x_i)| \geq 0$. 
Since the restriction $s$ of $\widetilde s$ to the generic fiber $X$ 
is a global section anyway, this proves that the Weil divisor 
associated with $D$ is effective. 
\end{proof}

\begin{art}  \label{model functions}
Each model metric $\metric$ on  $\KO_{X^\an}$ induces a 
continuous real function
$f=-\log \|1\|$ on $X^\an$.
The space of \emph{model functions}
\[
\modelD(X)=\{f\colon X^\an\rightarrow \R\,|\,
f=-\log \|1\| 
\mbox{ for a model metric }\metric \mbox{ on }\KO_X\}
\]
has a natural structure of a $\Q$-vector space.
We write $\modelD(X)_\R=\modelD(X)\otimes_\Q\R$.
It is shown in \cite[Thm.~7.12]{gubler-crelle} 
that the space of model functions 
$\modelD(X)$ is dense in the space
$C^0(X^\an)$ 
for the topology of uniform convergence.
A model function $f=-\log \|1\|$ on $X^\an$ which comes from an
algebraic metric $\metr$ on $\KO_{X^\an}$ is called
a \emph{$\Z$-model function}.

Let $\KX$ be a model of $X$.
We say that a model function $f=-\log \|1\|$ is 
\emph{determined on $\KX$} if the model metric 
$\metric$ is determined on $\KX$.
Let ${\rm Div}_0(\KX)$ denote the subgroup of ${\rm Div}(\KX)$
of vertical Cartier divisors on the model $\KX$.
Each $D\in {\rm Div}_0(\KX)$ determines a model
$\KO(D)$ of $\KO_X$ and an associated model function
$\varphi_D  \coloneqq -\log\|1\|_{\KO(D)}$.
\end{art}

\begin{prop} \label{effective model fct}
{Let $D$ be a vertical Cartier divisor on the model $\KX$ of $X$. If $D$ is effective, then $\varphi_D \geq 0$. The converse holds if $\KX$ is normal.}
\end{prop}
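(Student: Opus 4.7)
The plan is to work locally on an affine open $\KU \subseteq \KX$ on which the vertical Cartier divisor $D$ is represented by a local equation $f$, i.e., a nonzero rational function on $\KX$ with $D|_\KU = \div(f)$. Unwinding the definition of the model metric $\|\,\cdot\,\|_{\KO(D)}$ from \S\ref{models}: the canonical meromorphic section $s_D$ of $\KO(D)$ has $\div(s_D)=D$ and corresponds to the constant section $1$ of $\KO_X$ under the identification $\KO(D)|_X \cong \KO_X$ (which uses that $D$ is vertical). If $t$ is a frame of $\KO(D)$ over $\KU$, then $s_D = f\cdot t$, so $\|1\|_{\KO(D)} = |f|$ on $\red^{-1}(\KU_s)$, and hence $\varphi_D = -\log|f|$ there. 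In particular, verifying the two implications reduces to comparing the sign of $\log|f(x)|$ with the regularity of $f$ on $\KU$.

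For the forward implication, if $D$ is effective then, by definition, locally $f\in \KO_\KX(\KU)$. For any $x\in X^\an$ with $\red(x)\in \KU_s$, the multiplicative seminorm at $x$ is bounded by $1$ on $\KO_\KX(\KU)$ (this is a standard property of the reduction map recalled in \S\ref{modelanared}), so $|f(x)|\leq 1$, giving $\varphi_D(x)\geq 0$. Since $X^\an$ is covered by preimages $\red^{-1}(\KU_s)$ as $\KU$ runs over an affine open cover of $\KX$, we conclude $\varphi_D\geq 0$ globally.

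For the converse, assume $\KX$ is normal and $\varphi_D\geq 0$. After shrinking, we may assume $\KU$ is a normal affine integral scheme, and it suffices to show $f\in \KO_\KX(\KU)$, i.e.\ that $f$ lies in $\bigcap_\xi \KO_{\KX,\xi}$ where $\xi$ runs through the height-one points of $\KU$. A codimension-one point is either \emph{horizontal} (lying on the generic fibre $U=X\cap \KU$) or \emph{vertical} (a generic point $\xi_i$ of some irreducible component $E_i$ of $\KU_s$). At horizontal points $f$ is a unit since $\div(f)=D|_\KU$ is vertical, so $v_\xi(f)=0$. At a vertical point $\xi_i$, take the divisorial point $x_i\in X^\an$ of \S\ref{modelanared}; then $\red(x_i)=\xi_i\in \KU_s$, and the normalization of the valuation $v_i$ on $\KO_{\KX,\xi_i}$ extending $v$ on $K$ satisfies $v_i(g) = -\log|g(x_i)|$, exactly as used in the proof of Lemma \ref{sections and metrics}. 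The hypothesis $\varphi_D(x_i)\geq 0$ therefore translates into $v_i(f)\geq 0$. Normality gives $f\in \KO_\KX(\KU)$, so $D|_\KU$ is effective.

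The only delicate point I expect is organizing the bookkeeping between the three descriptions of the same object—$\varphi_D$ as a model function, the local equation $f$ of $D$, and the valuations at divisorial points—and ensuring the normality hypothesis is used exactly at the step where one recovers a regular function from valuative positivity. Everything else is a direct unwinding of the definitions already fixed in \S\ref{modelanared}--\S\ref{model functions}.
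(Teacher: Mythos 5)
Your proof is correct and follows essentially the same route as the paper's. The paper's converse direction invokes the multiplicity formula $c_V = \varphi_D(x_V)\,b_V$ from Lemma \ref{lem:equalityofmult} to see that the associated Weil divisor is effective, then concludes by normality with a citation to Hartshorne II.6.3A. You reprove that multiplicity bound by hand in terms of the local equation $f$ and the normalized valuations $v_i$ at divisorial points (this is exactly the computation inside the proof of Lemma \ref{lem:equalityofmult}, the same one used in Lemma \ref{sections and metrics}), and you make explicit the point, implicit in the paper, that the horizontal height-one primes contribute nothing because $D$ is vertical. The final step — recovering $f\in\KO_\KX(\KU)$ from $v_\xi(f)\geq 0$ at all height-one $\xi$ — is where normality enters, matching the paper's use of it. So the content is the same; your version is just self-contained rather than deferring to the later lemma, and slightly more careful about the case split among codimension-one points.
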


\begin{proof} 
{If $D$ is an effective Cartier divisor, then it follows easily from the definition of $\metr_{\Ocal(D)}$ that $\varphi_D \geq 0$. Conversely, if $\varphi_D \geq 0$, then the multiplicity formula \eqref{lem:equalityofmult-eq1} in Lemma \ref{lem:equalityofmult} below shows that the Weil divisor associated to $D$ is effective. Since $\KX$ is normal,  $D$ has to be an effective Cartier divisor \cite[Prop. II. 6.3.A]{Hart}.}
\end{proof}

\begin{rem} \label{non-complete generalization}
We note that Lemma \ref{sections and metrics} and hence Proposition \ref{effective model fct} hold also for a  non-complete discretely valued field $F$. 
The proof of Lemma  \ref{sections and metrics} has to be slightly changed: 
Working on the base change $\Xcal' \coloneqq \Xcal \otimes_{F^\circ} \kcirc$, where $K$ is the completion of $F$, and using $\|s\|_\Lcal \leq 1$, 
it follows from \cite[Proposition 6.5]{gubler-crelle} that $\tilde{s}$ induces an effective Weil divisor on $\Xcal'$. 
Since the special fibres of $\Xcal$ and $\Xcal'$ agree, it follows that the Weil divisor on $\Xcal$ associated to $\tilde{s}$ is effective. 
By normality of $\Xcal$, we conclude again that $s \in \Gamma(\Xcal,\Lcal)$.
\end{rem}

\subsection{Closed (1,1)-forms and semipositive metrics}
\label{closed-forms-pos-metrics}

We consider a model  $\KX$ of a proper variety $X$ over $K$.

\begin{art} \label{form curvature etc}
The finite dimensional real vector space space $N^1(\KX/S)$ is defined as the quotient of
$\Pic(\KX)_\R \coloneqq  {\rm Pic}(\KX) \otimes \R$ by the subspace 
generated by classes of line bundles $\KL$ such that $\KL\cdot C=0$ for each closed curve 
$C$ in $\KX_s$. 
An element $\alpha \in N^1(\KX/S)$ is called \emph{nef} if $\alpha \cdot C \geq 0$ 
for all closed curves $C$ in $\KX_s$.  
We call a line bundle $\KL$ on $\KX$ \emph{nef} if the class of $\KL$ in $N^1(\KX/S)$ is nef.
The \textit{space of closed $(1,1)$-forms on $X$} is defined as 
\begin{align} \label{form definition}
\Zcal^{1,1}(X)  \coloneqq \varinjlim N^1(\KX / S),
\end{align}
{where $\KX$ runs over} the isomorphism classes of models of $X$.

Let $L$ be a line bundle on $X$.
Let $\metr$ be a model metric on $L^\an$ which is determined on $\KX$ by
a model $\KL$ of $L^{\otimes m}$.
The class of $m^{-1}\KL$ in $N^1(\KX/S)$ determines a well defined class
$c_1(L,\metr)\in \Zcal^{1,1}(X)$ {called} the \textit{curvature form $c_1(L,\metr)$ of 
$(L, \metr)$}.
\end{art}

\begin{art} \label{Neron Severi}
{We denote by $N^1(X)$ the real vector space $\Pic(X) \otimes \R$ modulo numerical equivalence. A class in $N^1(X)$ is called {\it ample} if it is an  
$\R_{>0}$-linear combination of classes induced by ample line bundles on $X$.} 
The restriction maps 
$N^1(\KX/S) \rightarrow N^1(X),\,[\KL]\mapsto[\KL|_X]$ induce a linear map
$\{\phantom{a}\}\colon \Zcal^{1,1}(X) \longrightarrow  N^1(X), \, \theta \mapsto \{\theta \}$.
\end{art}

\begin{art} \label{semipositivity}
A closed $(1,1)$-form $\theta$ is called {\it semipositive} if it is represented by a nef
element $\theta_\KX \in N^1(\KX/S)$ for some model $\KX$ of $X$. 
We say that a model metric $\metr$ on $L^\an$ for
a line bundle $L$ on $X$ is {\it semipositive} 
if the same holds for the curvature form 
$c_1(L,\metr)$. 
\end{art}

\begin{art} \label{Zhang}
Let $L$ be a line bundle on $X$. 
Following Zhang \cite{zhang-95} we say that a continuous metric $\| \ \|$ on $L^\an$ is 
{\it {continuous} semipositive} if it is a uniform 
limit  of 
semipositive model metrics on $L^\an$.  
\end{art}

\begin{rem}\label{semipositive metric implies nef}
Let $L$ be a line bundle on $X$ which admits a
continuous semipositive metric.
Then the line bundle $L$ is nef 
(use \cite[Lemma 1.2]{BFJ1} or \cite[4.8]{gubler-martin}).
{This implies in particular that the generic fibre 
$L=\KL|_X$ of a nef line bundle $\KL$ on some model $\KX$
of $X$ is nef.}

\end{rem}

\subsection{Chambert-Loir measures and energy}\label{prelim-monge-ampere}

Throughout this subsection $X$ denotes a normal proper $K$-variety of dimension $n$. 

\begin{art} \label{chambert loir measure for models}
Let $\KX$ be a normal model of $X$.
{For line bundles $\KL_1,\ldots,\KL_n$ on the model $\KX$, Chambert-Loir \cite{chambert-loir-2006} introduced}  
the discrete signed measure 
\begin{equation}\label{defmongeamperemodelcaseeq1}
c_1(\KL_1)\wedge\ldots\wedge c_1(\KL_n)  \coloneqq
\sum\limits_{V}
\ell_{\Ocal_{\KX_s,\xi_V}}(\Ocal_{\KX_s,\xi_V})
(\KL_1\cdots\KL_n\cdot V)\,
\delta_{x_V}
\end{equation}
on $X^\an$, where $V$ runs over the irreducible components
of the special fibre $\KX_s$ of our model, 
$\xi_V$ is the generic point of $V$, 
$x_V$ denotes the divisorial point in $X^\an$ determined by $V$, and
$\delta_{x_V}$ is the Dirac measure supported in 
the point $x_V$.

Let $\KL_1,\ldots,\KL_n$ be nef on $\KX$ with $L_i \coloneqq\KL_i|_{X}$. 
Then the measure \eqref{defmongeamperemodelcaseeq1}  
is positive of total mass {$L_1\cdots L_n\cdot X$}. 
\end{art}

\begin{lemma} \label{lem:equalityofmult}
{Let $E$ be a vertical Cartier divisor on a normal model $\KX$ of $X$ with  model function $\varphi_E$. 
For an irreducible component $V$ of $\KX_s$ with divisorial point $x_V \in \Xan$, let $b_V$ (resp.~$c_V$) be the multiplicity of 
$\KX_s$ (resp.~$E$) in $V$. Then we have}
\begin{equation}\label{lem:equalityofmult-eq1}
c_V = \varphi_E(x_V) \cdot b_V.
\end{equation}
{Moreover, for line bundles $\KL_1, \dots, \KL_n$ on $\KX$, we have} 
\begin{equation}\label{lem:equalityofmult-eq2}
\KL_1 \cdots \KL_n \cdot E = \int_{\Xan} \varphi_E c_1(\KL_1)\wedge\ldots\wedge c_1(\KL_n) 
\end{equation}
\end{lemma}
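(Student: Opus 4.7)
The plan is to establish \eqref{lem:equalityofmult-eq1} by a direct local computation at the generic point $\xi_V$, and then deduce \eqref{lem:equalityofmult-eq2} by expanding both sides as finite sums indexed by the irreducible components of $\KX_s$ and invoking \eqref{lem:equalityofmult-eq1}.

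For \eqref{lem:equalityofmult-eq1}, since $\KX$ is normal and $V$ has codimension one in $\KX$, the local ring $\Ocal_{\KX,\xi_V}$ is a discrete valuation ring; let $w_V$ denote its normalised discrete valuation, so that $w_V$ takes value $1$ on a local uniformiser. Pick an affine open neighbourhood $\KU$ of $\xi_V$ on which $E$ has a local equation $\gamma$. The Weil-divisor multiplicity of $E$ along $V$ is then $c_V = w_V(\gamma)$, and since $\KX_s = \div(\pi)$, one has $w_V(\pi) = b_V$. The divisorial point $x_V$ is characterised by the property that its associated real-valued valuation extends the valuation $v$ on $K$ normalised by $v(\pi) = 1$; hence it is precisely $w_V/b_V$, so that $-\log|\gamma(x_V)| = w_V(\gamma)/b_V = c_V/b_V$. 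On the other hand, near $\xi_V$ the sheaf $\Ocal(E)$ is generated by $\gamma^{-1}$, so the canonical global section $1$ of $\Ocal_X$ writes as $1 = \gamma \cdot \gamma^{-1}$; by the construction of model metrics recalled in \ref{models}, this yields $\|1\|_{\Ocal(E)}(x_V) = |\gamma(x_V)|$ and hence $\varphi_E(x_V) = -\log|\gamma(x_V)| = c_V/b_V$, which is \eqref{lem:equalityofmult-eq1}.

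For \eqref{lem:equalityofmult-eq2}, the normality of $\KX$ combined with the verticality of $E$ allows me to identify the Cartier divisor $E$ with the Weil cycle $\sum_V c_V V$, where $V$ runs over the irreducible components of $\KX_s$. Multilinearity of the intersection product then gives
\[
\KL_1 \cdots \KL_n \cdot E \;=\; \sum_V c_V\,(\KL_1 \cdots \KL_n \cdot V).
\]
Expanding the definition \eqref{defmongeamperemodelcaseeq1} of the Chambert-Loir measure, integrating $\varphi_E$ against it yields
\[
\sum_V \ell_{\Ocal_{\KX_s,\xi_V}}(\Ocal_{\KX_s,\xi_V})\,(\KL_1 \cdots \KL_n \cdot V)\,\varphi_E(x_V).
\]
Since $\Ocal_{\KX_s,\xi_V} = \Ocal_{\KX,\xi_V}/(\pi)$ and $w_V(\pi) = b_V$, this length equals $b_V$, and \eqref{lem:equalityofmult-eq1} gives $b_V\,\varphi_E(x_V) = c_V$. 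The two expressions agree, proving \eqref{lem:equalityofmult-eq2}.

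The main obstacle is the bookkeeping of normalisations: one must carefully distinguish the valuation normalised on a local uniformiser of $\Ocal_{\KX,\xi_V}$ (which outputs the integer $c_V$) from the valuation normalised on $\pi \in K$ (which describes the divisorial point $x_V$), the discrepancy being exactly the factor $b_V$ appearing in \eqref{lem:equalityofmult-eq1}. Once this is pinned down, the rest is a direct local computation together with multilinearity of the intersection product.
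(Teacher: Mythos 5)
Your proof is correct and follows essentially the same route as the paper's: part \eqref{lem:equalityofmult-eq1} is the same local computation at $\xi_V$ using normality, the identification of the seminorm at $x_V$ with the (renormalised) rank-one valuation of $\Ocal_{\KX,\xi_V}$, and the factor $b_V = w_V(\pi)$ reconciling the two normalisations; part \eqref{lem:equalityofmult-eq2} is the same expansion of $E$ as the Weil cycle $\sum_V c_V V$ matched against \eqref{defmongeamperemodelcaseeq1} and the identification $\ell_{\Ocal_{\KX_s,\xi_V}}(\Ocal_{\KX_s,\xi_V}) = b_V$. The only cosmetic difference is that you take $\gamma$ to be a local equation of $E$ and track $w_V$ explicitly, whereas the paper takes $\gamma$ to be a local equation of $V$ and writes $E = \div(\gamma^{c_V})$ locally.
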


\begin{proof}
Denote by $\xi_V$ the generic point of $V$. 
Since $\KX$ is normal, it is regular in codimension one. 
Thus there exists a local equation $\gamma$ for $V$ at $\xi_V$. 
Then $\gamma^{c_V}$ is a local equation for $E$. 
By \cite[Prop. 1.3.3]{BurgosPhilipponSombra}, 
the seminorm associated with $x_
V$ is precisely the one which 
comes from the valuation of $\Ocal_{{\KX}, \xi_
V}$. 
{For a uniformizer $\pi$ of $\Ko$, we get}
\begin{align*}
1 = v(\pi) = - \log |\gamma^{b_V}(x_V)| | = - b_V \log |\gamma(x_V)|.
\end{align*}
This implies
\begin{align*}
\varphi_E(x_V) = -\log || 1(x_V) ||_{\Ocal(E)} 
= - c_V \log |\gamma(x_V)| = c_V / b_V
\end{align*}
which proves \eqref{lem:equalityofmult-eq1}. 
{From the first part and \eqref{defmongeamperemodelcaseeq1}, we deduce \eqref{lem:equalityofmult-eq2}.} 
\end{proof}

\begin{art} \label{chambert loir measure for metrized line bundles}
{For  {continuous} {semipositive} metrized line bundles
{$(L_1,\metr_1),\ldots,(L_n,\metr_n)$} on $X$ there exists
 a unique 
{positive} Radon measure 
$c_1(L_1,\metr_1)\wedge\ldots\wedge c_1(L_n,\metr_n)$
of total mass
$L_1\cdots L_n\cdot X$ on $X^\an$ with the following properties
(see \cite{chambert-loir-2006, gubler-2007a}):}
\begin{enumerate}
\item[(i)]
{The map 
$((L_1,\metr_1),\ldots,(L_n,\metr_n))\mapsto
c_1({L_1},\metr_1)\wedge\ldots\wedge c_1({L_n},\metr_n)$
is multilinear and symmetric.}
\item[(ii)]
If the metrics on $({L_1},\metr_1),\ldots,({L_n},\metr_n)$
are induced by line bundles $\KL_1,\ldots,\KL_n$ on a model $\KX$ of $X$
then $c_1(L_1,\metr_1)\wedge\ldots\wedge c_1(L_n,\metr_n)$ agrees with \eqref{defmongeamperemodelcaseeq1}.
\item[(iii)]
If each metric $\metric_i$ is a uniform limit 
of continuous semipositive metrics $(\metric_{ij})_{j\in \N}$
on $L_i^\an$, 
then the  measures 
$(c_1(L_{1},\metric_{1j})\wedge\ldots\wedge 
c_1(L_{n},\metric_{nj}))_{j\in\N}$  on $X^\an$
converge weakly to the measure 
$c_1(L_1,\metr_1)\wedge\ldots\wedge c_1(L_n,\metr_n)$.
\item[(iv)]
Given a morphism $f\colon X'\to X$ of normal proper $K$-varieties 
over $K$ of dimension $n$, we have for 
$\overline{L_i}  \coloneqq (L_i,\metr_i)$ {the projection formula}
\[
f_*\bigl(c_1(f^*\overline{L_1})\wedge\ldots\wedge c_1(f^*\overline{L_n})\bigr)\\
=\deg(f)\,c_1(\overline{L_1})\wedge\ldots\wedge c_1(\overline{L_n})
\]
{where $\deg(f)$ is the degree of the finite
function field extension  $K(X')/K(X)$ if
$f$ is dominant and zero otherwise.}
\end{enumerate} 
{We call $c_1(L_1,\metr_1)\wedge\ldots\wedge c_1(L_n,\metr_n)$ the 
\emph{Chambert-Loir measure for  $\overline{L_1},\ldots,\overline{L_n}$}.}
\end{art}

\begin{defi} \label{definition of energy}
{For {continuous} {semipositive} metrics $\metr_1,\metr_2$ on a line bundle $L$ over $X$, the \emph{energy} is} {defined as}
\begin{equation}\label{new-definition-energy}
E(L,\metr_1,\metr_2)  \coloneqq \frac{1}{n+1}\sum_{j=0}^n
\int_{X^{\rm an}}-\log\frac{\metr_1}{\metr_2}\,
c_1(L,\metr_1)^{\wedge j}\wedge c_1(L,\metr_2)^{\wedge (n-j)}\in \R.
\end{equation}
\end{defi}
This energy is denoted
$E_{\theta}(\varphi)$ with $\theta=c_1(L,\metr_1)$ and
$\varphi=-\log \frac{\metr_1}{\metr_2}$ in \cite[Sect. 6]{BFJ2}.

\begin{art} \label{special energy formula}
{If  $\metr_1,\metr_2$ are algebraic metrics induced by models $\KL_1,\KL_2$ of $L$ on a normal model $\KX$ of $X$, 
then we can write $\KL_1=\KL_2(D)$ for some vertical Cartier 
divisor $D$ on {$\KX$} and \eqref{lem:equalityofmult-eq2} yields the explicit formula}
\begin{equation}\label{-algebraic-definition-energy}
E(L,\metr_{\KL_1},\metr_{\KL_2})=\frac{1}{n+1}\sum\limits_{j=0}^n
\KL_1^j \cdot \KL_2^{n-j}\cdot D.
\end{equation}
\end{art}

\subsection{The semipositive envelope}
Let $X$ be a normal projective variety over $K$,
$L$ a  line bundle on $X$ and $\metr$ a continuous metric
on $L^\an$.

\begin{defi}\label{prelim-semipos-envelope-defi}
The \emph{semipositive envelope of the metric $\metr$} is the
singular metric 
\[
P(\metr)  \coloneqq \inf\bigl\{\metr_1\,\big|\,\metr_1 \mbox{ is a semipositive model 
metric on }L^\an\mbox{ with }\metr\leq \metr_1\}
\]
on $L^\an$ with the infimum taken pointwise on $X^\an$.
\end{defi}

\begin{rem} \label{homogenity of envelope}
(i) By definition, we have $P(\metr^{\otimes m})=P(\metr)^{\otimes m}$ for all $m \in \Z$.

(ii) Assume that the semipositive envelope $P(\metr)$ is a continuous metric. Using that the minimum of two semipositive model metrics is a semipositive model metric \cite[3.11, 3.12]{gubler-martin}, 
we see that $P(\metr)$ is the infimum of a decreasing family of semipositive model metrics and hence it follows from Dini's Theorem that $P(\metr)$ is a continuous semipositive metric.
\end{rem}

{For the rest of this subsection we assume
that $\tilde K$ has characteristic zero and that $L$ is an ample line bundle on a smooth projective variety $X$ over $K$.
In \cite{BFJ1}, the envelope was introduced} 
in terms of $\theta$-psh functions. To compare, 
let us fix a model metric $\metr_0$ on $L^{\an}$ for reference and 
consider $\theta  \coloneqq c_1(L,\metr_0)$. 
The function $-\log( P(\metr)/ \metr)$ is the $\theta$-psh envelope 
of the continuous 
function $-\log(\metr/\metr_0)$ on $\Xan$ 
as defined in \cite[Def. 8.1]{BFJ1} and \cite[Thm.~8.3]{BFJ1}  gives the following:

\begin{theo}[Boucksom, Favre, Jonsson]\label{prelim-semipos-envelope}
{Assume $\cha(\tilde K)=0$ and that $L$ is an ample line bundle on a smooth projective variety over $K$. 
Then the} semipositive envelope $P(\metr)$ 
is a continuous semipositive metric on $L^\an$.
\end{theo}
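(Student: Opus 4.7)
The plan is to translate the statement into the language of $\theta$-psh envelopes and then invoke the pluripotential-theoretic machinery of \cite{BFJ1}. Fix a semipositive reference model metric $\metr_0$ on $L^\an$ (which exists since $L$ is ample), write $\metr=\metr_0 e^{-\varphi}$ for a continuous function $\varphi\colon\Xan\to\R$, and set $\theta=c_1(L,\metr_0)\in\Zcal^{1,1}(X)$. The problem is then equivalent to showing that the $\theta$-psh envelope
\[
P_\theta(\varphi)\coloneqq\sup\bigl\{\psi\in\PSH(\theta)\,\big|\,\psi\leq\varphi\bigr\}
\]
is continuous on $\Xan$, where $\PSH(\theta)$ denotes the class of $\theta$-psh functions on $\Xan$ (those obtained as decreasing pointwise limits of $\theta$-psh model functions, i.e.\ differences $-\log(\metr_1/\metr_0)$ attached to semipositive model metrics $\metr_1$). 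Once $P_\theta(\varphi)$ is continuous, $P(\metr)=\metr_0 e^{-P_\theta(\varphi)}$ is a continuous semipositive metric by Remark \ref{homogenity of envelope}(ii).

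First I would verify that $P_\theta(\varphi)$ is itself $\theta$-psh and bounded. Boundedness is immediate from the boundedness of $\varphi$ (any $\theta$-psh function bounded above on $\Xan$ is automatically bounded below, up to a constant depending on the ample class). To see that $P_\theta(\varphi)$ is $\theta$-psh, one takes its upper semicontinuous regularization $P_\theta(\varphi)^*$; this is $\theta$-psh by general envelope arguments in the non-archimedean pluripotential theory developed in \cite{BFJ1}, and since $\varphi$ is continuous and bounds every competitor, one finds $P_\theta(\varphi)^*\leq\varphi$ pointwise, so $P_\theta(\varphi)^*$ itself is a competitor in the definition, forcing $P_\theta(\varphi)^*=P_\theta(\varphi)$. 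The real content of the theorem is the continuity of this envelope.

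For continuity, I would apply the BFJ regularization theorem, which asserts that every bounded $\theta$-psh function is a decreasing pointwise limit of $\theta$-psh model functions. Applied to $P_\theta(\varphi)$, this yields a decreasing sequence $(\psi_j)$ of semipositive $\R$-model functions with $\psi_j\searrow P_\theta(\varphi)$. To upgrade pointwise to uniform convergence, one must quantitatively produce, for each $\varepsilon>0$, a semipositive model metric $\metr_\varepsilon$ with $\metr\leq\metr_\varepsilon$ and $-\log(\metr_\varepsilon/\metr_0)\leq P_\theta(\varphi)+\varepsilon$. This is done by working on an SNC model $\KX$ of $X$ dominating the models of finitely many approximating model functions, using Fujita-type approximation to decompose the ample class attached to $\theta$ as the sum of a strictly nef $\Q$-divisor and a small correction, and then saturating the envelope by means of (asymptotic) multiplier ideal sheaves $\Jcal(m\varphi)$ on $\KX$: Nadel vanishing produces enough global sections to build the required model metric. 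The hypothesis $\cha(\tilde K)=0$ enters here in two essential ways, namely the existence of SNC models via Hironaka and the validity of Nadel vanishing/multiplier ideal techniques. Once continuity of $P_\theta(\varphi)$ is established, Dini's theorem turns the decreasing convergence $\psi_j\searrow P_\theta(\varphi)$ into uniform convergence, so $P(\metr)$ is a uniform limit of semipositive model metrics, hence a continuous semipositive metric in the sense of \ref{Zhang}. The hard step is unquestionably the quantitative approximation in the last paragraph: building $\theta$-psh model functions that simultaneously lie below $\varphi$ up to $\varepsilon$ and above $P_\theta(\varphi)$ up to $\varepsilon$ is where the global birational geometry of the model, together with the characteristic zero hypothesis, is indispensable.
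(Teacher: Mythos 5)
Your first paragraph reproduces exactly the paper's argument: fix a model metric $\metr_0$, set $\theta=c_1(L,\metr_0)$, identify $P(\metr)$ with the $\theta$-psh envelope of \cite[Def.~8.1]{BFJ1}, and use Remark~\ref{homogenity of envelope}(ii) to pass from continuity of the envelope to a continuous semipositive metric — and the paper then stops there, simply citing \cite[Thm.~8.3]{BFJ1}. The remainder of your proposal, sketching the BFJ regularization argument via multiplier ideals, Nadel vanishing, and SNC models (requiring $\cha(\tilde K)=0$), is a fair outline of the proof of the cited theorem, but it is material the paper deliberately does not reproduce.
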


\section{Asymptotic formulas for algebraic volumes }
\label{section asymptotic}

The goal of this section is to study the asymptotics of $h^i(Y,{m_1D_1+\ldots+m_rD_r})$ for fixed divisors $D_1,\ldots,D_r$ on a projective variety $Y$ over any field $k$. 
Our main result is Proposition \ref{prop asymptotic}.  Its consequences from  \S \ref{subsection nonreduced} will be applied in Sections \ref{section non-archimedean volume} and \ref{section differentiability}.   
In these applications, we will need to consider non-reduced projective schemes $Y$ over a non-reduced basis as $R= \Ko / (\pi^\alpha)$ for  a uniformizer $\pi$ of a discrete valuation ring $\Ko$ and a non-zero $\alpha$. 
Note that $R$ is not {necessarily} an algebra over the residue field. 
{Therefore we will develop} much of the theory 
over any  noetherian ring $R$  
in the spirit of the appendix in \cite[\S VI.2]{Kol96}.

Let us recall that the canonical morphism $\Div(Y) \to \Pic(Y)$ is surjective 
if the scheme $Y$ is projective over the noetherian scheme $S= \Spec(R)$ \cite[Cor.~21.3.5]{EGAIV4}.
This means that we can switch freely between the language of Cartier divisors and the language of line bundles. In this section, we have a slight preference to the former.

\subsection{Infinitesimal perturbations}
\label{subsection infinitesimal}

In this {subsection}, let $S=\Spec(R)$ for any noetherian ring $R$ and 
consider a projective scheme $Y$ over $S$. 
We fix a coherent $\Ocal_Y$-module $\Fcal$ on $Y$ with support over a 
{zero-dimensional closed subset}  of $S=\Spec(R)$. 
The dimension of the support of $\Fcal$ is 
denoted by $n$. We note that the cohomology $H^q(Y,\Fcal)$ is an $R$-module of 
finite length and we set 
\[ 
h^q(Y,\Fcal) \coloneqq \length_R\bigl(H^q(Y,\Fcal)\bigr).
\]

\begin{lemma}
\label{lemma very weak Bertini} \label{corollary difference effective}
Let $T$ be a finite subset of {$Y$}, let $D$ be a Cartier divisor on $Y$ 
and let $A$ be an ample divisor on $Y$. 
Then there exists a sufficiently large $m \in \N$ such that the Cartier 
divisors $mA$ and $D+mA$ are linearly equivalent to effective Cartier divisors 
$E$ and $F$, respectively, with the property   
that the supports of $E$ and $F$ are disjoint to $T$.
\end{lemma}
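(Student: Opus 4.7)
The plan is to enlarge $T$ so that it also contains the associated points of $Y$, and then to produce the desired sections of $\Ocal(mA)$ and $\Ocal(D+mA)$ by a single application of Serre vanishing. First I would set $T' \coloneqq T \cup \operatorname{Ass}(Y)$, where $\operatorname{Ass}(Y)$ is the (finite) set of associated points of the noetherian scheme $Y$, and equip $T'$ with its reduced scheme structure $\coprod_{p\in T'}\Spec\kappa(p)$, with coherent ideal sheaf $\Ical_{T'}\subseteq\Ocal_Y$.

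Ampleness of $A$ together with Serre vanishing applied to the coherent sheaves $\Ical_{T'}$ and $\Ical_{T'}\otimes\Ocal(D)$ then produces an $m_0$ such that for every $m\geq m_0$ one has
$$H^1\bigl(Y,\Ical_{T'}\otimes\Ocal(mA)\bigr)=H^1\bigl(Y,\Ical_{T'}\otimes\Ocal(D+mA)\bigr)=0.$$
Tensoring the short exact sequence $0\to\Ical_{T'}\to\Ocal_Y\to\Ocal_{T'}\to 0$ with the two invertible sheaves $\Ocal(mA)$ and $\Ocal(D+mA)$ and taking global sections, this vanishing yields the surjectivity of the restriction maps
$$H^0(Y,\Ocal(mA))\twoheadrightarrow \prod_{p\in T'}\Ocal(mA)|_p, \qquad H^0(Y,\Ocal(D+mA))\twoheadrightarrow \prod_{p\in T'}\Ocal(D+mA)|_p,$$
where $\Ocal(-)|_p$ denotes the one-dimensional $\kappa(p)$-fibre at $p$. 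I would then lift a tuple having every component non-zero in its fibre to produce global sections $s_E\in H^0(Y,\Ocal(mA))$ and $s_F\in H^0(Y,\Ocal(D+mA))$ vanishing at no point of $T'$.

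Since neither $s_E$ nor $s_F$ vanishes at any associated point of $Y$, both are regular sections and therefore define effective Cartier divisors $E\sim mA$ and $F\sim D+mA$ whose supports coincide with the vanishing loci of $s_E$ and $s_F$. Because $T\subseteq T'$, both supports are disjoint from $T$, as required.

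The step requiring most care is the inclusion of $\operatorname{Ass}(Y)$ inside $T'$: avoiding the associated points of $Y$ is exactly what upgrades \emph{non-zero section} to \emph{regular section}, hence to \emph{effective Cartier divisor}. Apart from that, only Serre vanishing on a projective scheme over the noetherian base ring $R$ is used, so no assumption on $R$ (such as having infinite residue fields) is needed, which would be the only real worry if one tried instead a naive prime-avoidance argument in projective coordinates.
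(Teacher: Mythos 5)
Your proposal hits a genuine obstruction at the very first step: the set $T'=T\cup\operatorname{Ass}(Y)$ is not, in general, a Zariski-\emph{closed} subset of $Y$. Associated points are generic points of (possibly positive-dimensional) irreducible closed subsets, so for instance if $Y$ is a projective variety over a field, $\operatorname{Ass}(Y)$ is the single generic point of $Y$, and in the application of the lemma inside Proposition~\ref{lemma perturbation3} the finite set $T$ also consists of non-closed points (generic points of $\supp(\Fcal)$). Consequently there is no closed subscheme $\coprod_{p\in T'}\Spec\kappa(p)$ of $Y$, no coherent ideal sheaf $\Ical_{T'}$, and no short exact sequence $0\to\Ical_{T'}\to\Ocal_Y\to\Ocal_{T'}\to 0$ to which one could apply Serre vanishing. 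The closed-point case your argument actually handles is exactly the case that is not needed here. Replacing $T'$ by its closure $\overline{T'}_{\mathrm{red}}$ does not repair this: in the example just mentioned $\overline{T'}_{\mathrm{red}}=Y_{\mathrm{red}}$, so surjectivity of $H^0(Y,\Ocal(mA))\to H^0(\overline{T'}_{\mathrm{red}},\Ocal(mA))$ gives no information, and one is left with the original problem of finding a section on $\overline{T'}_{\mathrm{red}}$ nonvanishing at all points of $T'$.

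You correctly isolated the two ingredients one needs — regularity of the section is equivalent to nonvanishing at associated points, and over finite residue fields a naive linear-algebra prime-avoidance of vanishing loci breaks down — but the vanishing-theorem route cannot replace the needed avoidance argument in the presence of non-closed points. The paper instead invokes the graded prime-avoidance machinery in \cite[\href{http://stacks.math.columbia.edu/tag/0B3Q}{Tag 0B3Q}, \href{http://stacks.math.columbia.edu/tag/0AG8}{Tag 0AG8}]{stacks-project}: working in the section ring $\bigoplus_{n}\Gamma(Y,\Ocal(nA))$ (resp.\ in the twisted module $\bigoplus_n\Gamma(Y,\Ocal(D+nA))$), the set of homogeneous elements vanishing at a given point $p\in T'$ is a homogeneous prime (this works for \emph{any} point $p$, not just closed ones, because $\Ocal(nA)\otimes\kappa(p)$ is a one-dimensional vector space over the field $\kappa(p)$), and homogeneous prime avoidance over an arbitrary noetherian ring produces a homogeneous section avoiding all of them simultaneously. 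That is the missing idea; the rest of your argument (regular section $\Rightarrow$ effective Cartier divisor, supports disjoint from $T$) is fine.
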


\begin{proof} 
{Recall that regular global sections are 
precisely those global 
sections which correspond to effective Cartier divisors.  In 
\cite[\href{http://stacks.math.columbia.edu/tag/0AYL}{Tag 0AYL}]{stacks-project}, it is explained that a global 
section is regular if and only if it does not vanish in the associated 
points of {$Y$}.} 

{{\it Step 1. There is  $m_0 \geq 0$ such that for any integer 
$m \geq m_0$ there exists a global section $s$ of $\Ocal(D+mA)$ 
with $s(t) \neq 0$ for all $t \in T$.}}

{Since  $Y$ is  noetherian, the closure of any point 
$t \in T$ contains a closed point $t_0$ by 
\cite[\href{https://stacks.math.columbia.edu/tag/02IL}{Tag 02IL}]{stacks-project}.
If $s$ is a global section  of $\Ocal(D+mA)$ with $s(t_0)\neq 0$ 
then $s(t)\neq 0$ and 
hence we can replace $t$ by $t_0$.
So we may  assume that the points in $T$ are closed.}

{We consider $T$ as a reduced closed subscheme $t\colon T\to Y$
By restriction of regular functions to $T$, we get a short exact sequence of coherent $\Ocal_Y$-modules 
\[
0 \longrightarrow \Kcal \longrightarrow 
\Ocal_Y \longrightarrow t_*\Ocal_T \longrightarrow 0.
\]
We twist by $\Ocal(D+mA)$ and consider the associated long 
exact cohomology sequence. Since $A$ is ample,  Serre's vanishing 
theorem \cite[Theorem III 5.2]{Hart} yields a surjection
\begin{equation*}
\label{eq: les T1}
\Gamma(Y, \Ocal(D+mA)) \longrightarrow \Gamma(T, t^* \Ocal(D+mA) ) \longrightarrow 0
\end{equation*}
for $m \gg 0$. Since $T$ is discrete, $t^* \Ocal(D+mA)$ is trivial
and we find a nowhere vanishing section
$s_0 \in \Gamma(T,t^* \Ocal(D+mA))$.
The above surjection 
allows to lift $s_0$ to a section 
$s \in \Gamma(Y, \Ocal(D+mA))$ which does not vanish 
at any $t\in T$. This proves the first step.}

{{\it Step 2. There is  $m_0 \geq 0$ such that for any integer $m \geq m_0$ there exists an effective Cartier divisor $F$ linearly equivalent to $D+mA$ with $\supp(F) \cap T = \emptyset$.}}

{We enlarge first $T$ to include the finitely many associated points of $Y$. Then we apply the first step to get a global section $s$ of $\Ocal(D+mA)$ with $s(t)\neq 0$ for all $t \in T$. As explained at the beginning, such a global section has to be regular and hence the associated effective Cartier divisor does the job in Step 2.}

{Lemma \ref{lemma very weak Bertini} follows by applying Step 2 twice, once for $D=0$ and once for $D$.} 
\end{proof}

It is well known (see \cite[1.2.33]{Laz1} if the base is a field) that for every integer $q$ 
\begin{equation}
\label{eq classical}
h^q(Y, \Fcal(mD)) = O(m^n).
\end{equation}
We need the following easy generalization. We will fix line bundles $M_1, \dots, M_r$ and $P_1, \dots, P_s$ on $Y$. For $\mb=(m_1,\ldots,m_{r}) \in \N^r$ and $\pb=( p_1,\ldots,p_s) \in \N^{s}$, {$r,s\geq 0$}, we set 
\[
\Fcal(\mb,\pb)  \coloneqq \Fcal \otimes M_1^{\otimes m_1} \otimes \dots\otimes 
M_r^{\otimes m_r} \otimes 
P_1^{\otimes p_1} \otimes \dots\otimes P_{s}^{\otimes p_{s}}.
\]

\begin{prop}
\label{lemma perturbation3}
There is constant  $C\in \R$ (depending on the isomorphism classes of  $\Fcal, M_1, \dots, M_r, P_1, \dots, P_s$) such that for all $m_1,\ldots,m_{r}, p_1,\ldots,p_s \in \N \setminus \{0\}$ we have
\begin{equation*}
\left| h^q (Y, \Fcal(\mb,\pb )) -  h^q (Y, \Fcal( \mathbf{0}, \pb)) \right| 
 \leq C\cdot{m}  (m+p)^{n-1}
\end{equation*}
where $m  \coloneqq \sum_{i=1}^r m_i$ and $p  \coloneqq  \sum_{j=1}^s p_j$. 
\end{prop}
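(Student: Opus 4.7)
The plan is to prove the bound by induction on $n = \dim\supp(\Fcal)$, telescoping in the $\mb$-variables and controlling each one-step jump via restriction to effective divisors. The base case $n = 0$ is immediate: any line bundle is locally trivial at the finitely many points supporting $\Fcal$, so $h^q(\Fcal(\mb,\pb))$ is independent of $(\mb,\pb)$ and the difference vanishes. For the inductive step I first establish a ``crude'' multivariable extension of \eqref{eq classical}: for any coherent sheaf $\Gcal$ on $Y$ with support of dimension $\leq N$ over a zero-dimensional closed subset of $S$, and any finite list of line bundles $N_1, \dots, N_t$ on $Y$,
\[
h^q\bigl(Y,\,\Gcal \otimes N_1^{\otimes a_1} \otimes \cdots \otimes N_t^{\otimes a_t}\bigr) = O\bigl((1+a_1+\dots+a_t)^N\bigr)
\]
uniformly in $a_1,\dots,a_t \in \N$.

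The crude bound is proved by the same induction on $N$. Apply Lemma \ref{corollary difference effective} with $T$ equal to the finite set of associated points of $\Gcal$ to write each $N_i \cong \Ocal_Y(F_i - E_i)$ with $E_i, F_i$ effective Cartier divisors whose supports avoid $T$. Since line bundles are flat, the associated points of $\Gcal \otimes L$ coincide with those of $\Gcal$ for every line bundle $L$, so for any intermediate twist $L$ the two short exact sequences
\[
0 \to \Gcal \otimes L \to \Gcal \otimes L(F_i) \to (\Gcal \otimes L(F_i))|_{F_i} \to 0,
\]
\[
0 \to \Gcal \otimes L \otimes N_i \to \Gcal \otimes L(F_i) \to (\Gcal \otimes L(F_i))|_{E_i} \to 0
\]
are exact on the left. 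Their long exact cohomology sequences, combined with the triangle inequality, bound $|h^q(\Gcal \otimes L \otimes N_i) - h^q(\Gcal \otimes L)|$ by a sum of $h^q$ and $h^{q-1}$ of the restricted sheaves $(\Gcal \otimes L(F_i))|_W$ for $W \in \{E_i, F_i\}$. Since $W$ misses the associated points of $\Gcal$, these restrictions have support of dimension $\leq N-1$, and writing $L = N_1^{\otimes b_1} \otimes \cdots \otimes N_t^{\otimes b_t}$ expresses them as the fixed sheaves $(\Gcal \otimes \Ocal(F_i))|_W$ twisted by $N_1^{\otimes b_1}|_W \otimes \cdots \otimes N_t^{\otimes b_t}|_W$. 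The inductive hypothesis on $W$ then bounds each such term by $O((1+b_1+\dots+b_t)^{N-1})$, and telescoping all exponents from $\mathbf{0}$ to $(a_1,\dots,a_t)$ in $a_1+\dots+a_t$ steps yields the crude bound of order $N$.

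The proposition follows from one more application of the same argument. Telescope $h^q(\Fcal(\mb,\pb)) - h^q(\Fcal(\mathbf{0},\pb))$ into $m = m_1+\dots+m_r$ jumps, each of the form $h^q(\Gcal \otimes M_i) - h^q(\Gcal)$ where $\Gcal = \Fcal \otimes \bigotimes_j M_j^{\otimes m_j'} \otimes \bigotimes_k P_k^{\otimes p_k}$ for some $0 \leq m_j' \leq m_j$ (so $\supp \Gcal = \supp \Fcal$ still has dimension $n$). Writing $M_i \cong \Ocal(F_i - E_i)$ via Lemma \ref{corollary difference effective} as above and running the same two exact sequences, each jump is bounded by cohomology dimensions of restrictions of $\Gcal \otimes \Ocal(F_i)$ to $E_i$ or $F_i$; applying the crude bound on the divisor $W \in \{E_i, F_i\}$ to the fixed sheaf $(\Fcal \otimes \Ocal(F_i))|_W$ with line bundles $M_j|_W, P_k|_W$ carrying exponents bounded by $m_j$ and $p_k$ yields $O((m+p)^{n-1})$ per jump. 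Summing over the $m$ jumps produces the desired estimate $O(m\,(m+p)^{n-1})$. The main obstacle is keeping the constant $C$ uniform in $\mb, \pb$ across both nested inductions; this succeeds because the associated points of $\Gcal \otimes L$ depend only on $\Gcal$, so Lemma \ref{corollary difference effective} lets us fix single effective representatives $E_i, F_i$ of each $M_i$ (and similarly for each $N_i$ in the crude bound) once and for all before starting.
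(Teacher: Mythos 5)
Your proof is correct and follows essentially the same route as the paper: telescope along the $M_i$-exponents into one-step jumps, control each jump via short exact sequences coming from multiplication by sections of effective divisors produced by the Bertini-type lemma (Lemma~\ref{lemma very weak Bertini}) so as to miss the bad points, and then feed in an inductive polynomial bound on the restrictions to the lower-dimensional subschemes $E_i,F_i$. The only cosmetic difference is that you package the inductive input as a ``crude'' absolute growth bound $h^q=O\bigl((1+\sum a_i)^N\bigr)$, whereas the paper phrases it as a uniform one-step perturbation estimate (its inequality~\eqref{eq easy 2}); these are equivalent by telescoping from $\mathbf{0}$, and your explicit choice of $T$ as the set of associated points of $\Fcal$ (together with the remark that tensoring by an invertible sheaf preserves associated points) is a slightly more careful way of guaranteeing left-exactness of the two short exact sequences than the paper's reference to generic points of $\supp(\Fcal)$.
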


\begin{proof}
{We  prove the claim by induction on $n = \dim(\supp(\Fcal))$. If the support is empty, then $\Fcal=0$ and the claim holds for $n=-\infty$. So we may assume $n \geq 0$.} 
As a first step, we will show the existence of a constant $C'$ depending only on the isomorphism classes of $\Fcal, M_1, \dots, M_r$ 
and of a line bundle $L$ such that 
\begin{equation} \label{eq easy 2}
\left| h^q (Y, \Fcal(\mb) \otimes L) -  h^q (Y, \Fcal(\mb)) \right| \leq C' m^{n-1}
\end{equation}
for all $\mb \in (\N \setminus \{0\})^r$ and $\Fcal(\mb)  \coloneqq \Fcal \otimes M_1^{\otimes m_1} \otimes \dots\otimes 
M_r^{\otimes m_r}$. 
By Lemma \ref{lemma very weak Bertini}, there are effective Cartier divisors $E$ and $F$ of $Y$ such that 
$\Ocal(E-F) \simeq L$ and such that the supports of $E$ and $F$ both do not contain 
{a generic point of $\supp(\Fcal)$.}  
This means that the support of $\Fcal(\mb) |_{E}$ has dimension at most $n-1$. The same also holds for the restriction of $\Fcal(\mb,E)  \coloneqq \Fcal(\mb) \otimes \Ocal(E)$ to $E$ and  for the restrictions to $F$. 
Then we have the short exact sequence 
\begin{equation} \label{seq1}
0 \longrightarrow  \Fcal(\mb)  \stackrel{\otimes s_E}{\longrightarrow}   
\Fcal(\mb, E) \longrightarrow \Fcal(\mb, E)|_{E} \longrightarrow 0
\end{equation}
where $s_E$ is the canonical global section of $\Ocal(E)$. 
By induction on $n$, we have 
\begin{equation} \label{indhyp}
h^q (E, \Fcal(\mb, E)|_E) \leq C_{n-1} \cdot m^{n-1}
\end{equation}
for {a} $C_{n-1} \in \R_{\geq 0}$ depending only on the isomorphism classes of 
$\Fcal$,$M_1, \dots, M_{r}$ and $\Ocal(E)$. Using the  long 
exact cohomology sequence associated to \eqref{seq1}, we deduce 
\begin{equation*} \label{ineq 1}
-h^{q-1}(E, \Fcal(\mb,E)|_E) \leq h^q(Y , \Fcal(\mb, E) ) -
h^q( Y, \Fcal(\mb)) 
\leq h^q(E, \Fcal(\mb, E)|_E). 
\end{equation*}
Using {these inequalities} and \eqref{indhyp}, we get 
\begin{equation} \label{ineq 2} 
\left| h^q(Y , \Fcal(\mb, E) ) -
h^q( Y, \Fcal(\mb)) \right| \leq C_{n-1} \cdot m^{n-1}.
\end{equation}
We apply \eqref{ineq 2} to $\Fcal' \coloneqq\Fcal(E-F)$ instead of $\Fcal$ 
and $F$ instead of $E$. We get $C_{n-1}' \in \R_{\geq 0}$ depending only on the isomorphism classes of $\Fcal, M_1, \dots, M_{r}, \Ocal(E)$ and $\Ocal(F)$ such that 
\begin{equation} \label{ineq 3}
\left| h^q(Y , \Fcal'(\mb, F) ) -
h^q( Y, \Fcal'(\mb)) \right| \leq C_{n-1}' \cdot m^{n-1}.
\end{equation}
Using that $\Fcal'(\mb)  \simeq \Fcal(\mb) \otimes L $ and that $\Fcal'(\mb, F) \simeq \Fcal(\mb,E)$, the inequality \eqref{eq easy 2} follows easily from \eqref{ineq 2} and \eqref{ineq 3} with the constant $C'  \coloneqq C_{n-1} + C_{n-1}'$.

To prove  Proposition \ref{lemma perturbation3}, we apply  \eqref{eq easy 2}  for any $\kb \in \N^r$ with $k=\sum_{j=1}^r k_j$ 
to get 
\begin{equation} \label{ineq 6} 
\left| h^q(Y, \Fcal(\kb,\pb) \otimes L) - h^q(Y, \Fcal(\kb,\pb) \right|  \leq C (k+p)^{n-1}.
\end{equation}
for any $L \in \{M_1, \dots, M_r\}$ with  $C \in \R_{\geq 0}$ depending only on the isomorphism classes of $\Fcal, M_1, \dots, M_{r}, P_1, \dots, P_s$. 
The claim follows from  an $m$-fold application of \eqref{ineq 6}.
\end{proof}

\subsection{D\'evissage and non reduced schemes} \label{devissage subsection}
In this subsection, {we work over} $S=\Spec(R)$ for a noetherian ring $R$. 
The goal is to generalize the following classical fact from \cite[1.5]{Deb01} to 
{the situation over} the base scheme $S$.

\begin{lemma}
\label{lemma basic}
Let $Y$ be an $n$-dimensional projective variety over an arbitrary field $k$ and let $q\in \N$. 
Let 
$D_1,\ldots,D_r$ be Cartier divisors and $\Fcal$ a
coherent sheaf on $Y$. 
Then for $m_1,\ldots, m_r\in \N \setminus \{0\}$ and $m=\sum_{i=1}^r m_i$, we have
\[ 
h^q\Bigl(Y, \Fcal\Bigl({\sum_{i=1}^r} m_i D_i\Bigr)\Bigr) 
= \rank(\Fcal) h^q\Bigl(Y,\Ocal_Y\Bigl({\sum_{i=1}^r} m_i D_i \Bigr)\Bigr) + O(m^{n-1}).
\]
where $\rank(\Fcal)$ is the dimension of the $\Ocal_{Y,\xi}$-vector space 
$\Fcal_\xi$ {at} the generic point $\xi$ of $Y$.
\end{lemma}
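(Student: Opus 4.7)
The plan is to isolate the rank-$r$ contribution by extracting a generically free subsheaf, and then to absorb the auxiliary twist with the perturbation estimate \eqref{eq easy 2}. First, I would pass to the torsion-free quotient $\Fcal' \coloneqq \Fcal/\Fcal_{\tors}$. Since the torsion subsheaf is supported in dimension at most $n-1$, Proposition~\ref{lemma perturbation3} (applied with $s=0$ to $\Fcal_{\tors}$) yields $h^q(Y, \Fcal_{\tors}(\mb)) = O(m^{n-1})$ uniformly in $\mb$, and the long exact sequence of $0 \to \Fcal_{\tors} \to \Fcal \to \Fcal' \to 0$ then gives $h^q(Y, \Fcal(D)) = h^q(Y, \Fcal'(D)) + O(m^{n-1})$, where I abbreviate $D \coloneqq \sum_i m_i D_i$. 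Since $\rank(\Fcal') = \rank(\Fcal) = r$, it suffices to treat the torsion-free case.

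Next, I would fix one ample divisor $A$ on $Y$ large enough that $\Fcal'(A)$ is globally generated (Serre vanishing), and select $r$ global sections of $\Fcal'(A)$ whose images remain linearly independent in the stalk at the generic point $\xi \in Y$. These assemble into a morphism $\Ocal_Y^{\oplus r} \to \Fcal'(A)$ that is an isomorphism at $\xi$, i.e.\ a morphism $\varphi\colon \Ocal_Y(-A)^{\oplus r} \to \Fcal'$ whose kernel is torsion. Since $Y$ is integral the domain is torsion-free, so $\varphi$ is injective; its cokernel $\Gcal$ vanishes on a dense open subset, hence $\dim\supp\Gcal \leq n-1$.

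Twisting the short exact sequence $0 \to \Ocal_Y(-A)^{\oplus r} \to \Fcal' \to \Gcal \to 0$ by $\Ocal_Y(D)$ and taking cohomology gives
\[
\bigl| h^q(Y, \Fcal'(D)) - r\,h^q(Y, \Ocal_Y(D-A)) \bigr| \leq h^q(Y, \Gcal(D)) + h^{q-1}(Y, \Gcal(D)),
\]
and the right-hand side is $O(m^{n-1})$ by Proposition~\ref{lemma perturbation3} applied to $\Gcal$, since $\dim\supp\Gcal\leq n-1$. To replace $\Ocal_Y(D-A)$ by $\Ocal_Y(D)$ I would invoke \eqref{eq easy 2} with $L = \Ocal_Y(-A)$, obtaining $|h^q(Y, \Ocal_Y(D-A)) - h^q(Y, \Ocal_Y(D))| \leq C'\,m^{n-1}$. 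Combining these three estimates yields the lemma.

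The delicate step is the construction of the generically split embedding $\Ocal_Y(-A)^{\oplus r} \hookrightarrow \Fcal'$ with cokernel of strictly smaller support: this requires $Y$ to be integral (so that a generically bijective map from a torsion-free sheaf is injective) and one single ample $A$ that works uniformly in $\mb$. Once that structural input is in place, both sources of error — lower-dimensional supports and a fixed line-bundle twist — are absorbed by the tools already developed in \S\ref{subsection infinitesimal}.
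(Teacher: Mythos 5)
Your proof is correct. Note that the paper itself does not prove Lemma~\ref{lemma basic}: it is quoted without proof from \cite[1.5]{Deb01}, so there is no in-paper argument to compare against. Your dévissage — pass to the torsion-free quotient, then embed a generically free sheaf $\Ocal_Y(-A)^{\oplus \rank\Fcal}\hookrightarrow \Fcal'$ with cokernel of strictly smaller support, and combine the two long exact sequences with Proposition~\ref{lemma perturbation3} and the one-twist estimate \eqref{eq easy 2} — is exactly the standard argument in Debarre/Lazarsfeld, adapted cleanly to cite the tools already developed in \S\ref{subsection infinitesimal}. All the delicate points you flagged are handled correctly: global generation of $\Fcal'(A)$ implies the existence of $\rank\Fcal$ sections that are $K(Y)$-linearly independent at $\xi$ (the $k$-span of the images generates the generic stalk over $K(Y)$, so one can extract a $K(Y)$-basis); the resulting map of the same generic rank is injective because a subsheaf of a locally free sheaf on an integral scheme is torsion-free while the kernel vanishes generically; and the cokernel then has $\dim\supp\leq n-1$, so its cohomology is $O(m^{n-1})$ by Proposition~\ref{lemma perturbation3} applied with $s=0$, exactly as for the torsion subsheaf. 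One cosmetic issue: you write $r$ both for the number of divisors $D_1,\ldots,D_r$ (as in the statement) and for $\rank(\Fcal)$ in the middle of your argument; these should be kept notationally distinct.
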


We need the following d\'{e}vissage result for
coherent sheaves.   
\begin{lemma}
\label{lemma devissage}
For a coherent sheaf $\Fcal$ on a noetherian scheme $Y$, 
there is a filtration
\begin{equation} \label{devissage display}
0 = \Fcal_0 \subset \Fcal_1 \subset
\ldots \subset \Fcal_s = \Fcal
\end{equation}
by coherent subsheaves, closed integral subschemes
$\iota_j\colon 
Z_j \hookrightarrow Y$ and coherent sheaves of ideals $\Ical_j \subset 
\Ocal_{Z_j}$ with 
{$\supp(\Ical_j)=Z_j$} and 
$\Fcal_j/\Fcal_{j - 1}\simeq \iota_{j, *}( \Ical_j)$ for $j=1,\dots, s$.
\end{lemma}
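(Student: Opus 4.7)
I would prove the lemma by noetherian induction on $\supp(\Fcal)\subseteq Y$, combined with (once the support is irreducible) a secondary induction on the length $\ell_{\Ocal_{Y,\eta}}(\Fcal_\eta)$ at the generic point $\eta$ of the support.

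The first reduction is to the case where $\supp(\Fcal)$ is irreducible. Writing $\supp(\Fcal) = T_1\cup T_2$ with both $T_i$ proper closed subsets, I would denote by $\Jcal$ the ideal sheaf of $T_2$ with reduced structure and set $\Fcal' := \bigcup_{n\geq 0} \{s \in \Fcal : \Jcal^n\cdot s = 0\}$. Noetherianness makes this a coherent subsheaf supported on $T_2$, and a direct local verification shows $\supp(\Fcal/\Fcal') \subseteq T_1$. Applying the inductive hypothesis to both pieces and concatenating the resulting filtrations yields the desired filtration of $\Fcal$.

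Once $\supp(\Fcal) = T$ is irreducible with reduced induced structure $\iota\colon Z \hookrightarrow Y$ integral, the descending filtration by $\Jcal_Z^k \Fcal$ terminates at $0$, since some power of $\Jcal_Z$ annihilates $\Fcal$, and has successive quotients of the form $\iota_*\Gcal_k$ for coherent sheaves $\Gcal_k$ on $Z$. This reduces the problem to filtering coherent sheaves $\Gcal$ on an integral noetherian scheme $Z$. If $\Gcal_\eta = 0$, then $\supp(\Gcal) \subsetneq Z$ and we are done by noetherian induction on the support. Otherwise, choose a non-zero $g \in \Gcal_\eta$, lift it to a section $g_V$ over an affine open $V \ni \eta$, and shrink $V$ until the map $\Ocal_V \to (\Gcal/\Tcal_\Gcal)|_V$ sending $1\mapsto g_V$ modulo torsion is injective. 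Using the inclusions $\Ocal_Z \subset j_*\Ocal_V$ (via the unit of adjunction for $j\colon V\hookrightarrow Z$) and $\Gcal/\Tcal_\Gcal \hookrightarrow j_*((\Gcal/\Tcal_\Gcal)|_V)$ (injective by torsion-freeness and density of $V$ in $Z$), the intersection of these two subsheaves inside $j_*((\Gcal/\Tcal_\Gcal)|_V)$ gives a coherent ideal sheaf $\Ical \subset \Ocal_Z$ with $\supp(\Ical) = Z$ together with an injection $\Ical \hookrightarrow \Gcal/\Tcal_\Gcal$. Pulling back along $\Gcal\twoheadrightarrow\Gcal/\Tcal_\Gcal$ gives a coherent subsheaf $\Ical^* \subseteq \Gcal$ fitting into the short exact sequence $0\to\Tcal_\Gcal\to\Ical^*\to\Ical\to 0$; the torsion piece $\Tcal_\Gcal$ has strictly smaller support than $Z$, and $\Gcal/\Ical^*$ has strictly smaller generic length than $\Gcal$, so the inductive hypotheses apply to complete the construction.

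The main obstacle I anticipate is the globalization in the last step: promoting a local injection on an affine open $V$ to a global ideal sheaf of $\Ocal_Z$ injecting into $\Gcal/\Tcal_\Gcal$. The essential input is that the intersection of two coherent subsheaves inside a common ambient quasi-coherent sheaf on a noetherian scheme remains coherent, which converts the local construction into the required global one.
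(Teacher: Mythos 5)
Your proof is correct, and it is a self-contained argument, whereas the paper's ``proof'' simply cites [Stacks Project, Tag 01YC] and remarks that the support condition $\supp(\Ical_j)=Z_j$ is automatic (a nonzero coherent ideal sheaf on an integral scheme is nonzero at the generic point, hence has the whole scheme as support). The organization differs from the Stacks argument: you run a double induction (noetherian induction on $\supp(\Fcal)$, combined with induction on the generic length once the support is irreducible), and you pre-filter by powers of $\Jcal_Z$ to reduce to $\iota_*$-sheaves before peeling off the torsion and an ideal piece. The Stacks Project instead picks a maximal coherent subsheaf $\Fcal'\subseteq\Fcal$ for which $\Fcal/\Fcal'$ admits the desired filtration (using the ascending chain condition on coherent subsheaves) and then applies the one-step lemma (Tag 01YB) to $\Fcal/\Fcal'$ to contradict maximality. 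The key ``single step'' is essentially the same in both: generate a rank-one subsheaf from a nonzero germ at the generic point of an integral subscheme and intersect with $\Ocal_Z$ to obtain a coherent ideal sheaf with full support. The concern you flag about globalizing the intersection is handled correctly: $\Ocal_Z\cap(\Gcal/\Tcal_\Gcal)$ inside $j_*\bigl((\Gcal/\Tcal_\Gcal)|_V\bigr)$ is the kernel of a morphism from the coherent sheaf $\Ocal_Z$ to the quasi-coherent sheaf $j_*\bigl((\Gcal/\Tcal_\Gcal)|_V\bigr)/(\Gcal/\Tcal_\Gcal)$, hence a quasi-coherent subsheaf of $\Ocal_Z$ and therefore coherent on the noetherian scheme $Z$. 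Your double induction buys you an explicit construction without invoking maximal elements, at the cost of a slightly more intricate bookkeeping of which invariant (support or generic length) decreases at each step; both routes are valid.
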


\begin{proof}
This can be found in \cite[\href{http://stacks.math.columbia.edu/tag/01YC}{Tag 01YC}]{stacks-project} except the precise statement for the support of the 
{$\Ical_j$}. 
The latter follows 
immediately from the argument in {\it loc.~cit.}.
\end{proof}

We have the following generalization of Lemma  \ref{lemma basic}. 

\begin{lemma}
\label{lemma bound cohomology non reduced}
Let $Y$ be a projective scheme over $S$ and let $\Fcal$ be a coherent sheaf on $Y$ with support over a zero dimensional subscheme of $S$. 
We denote by $\{E_i\}_{i\in I}$ the set of irreducible components of
$\supp(\Fcal)$ of maximal dimension $n  \coloneqq \dim(\supp(\Fcal))$. 
Let $D_1,\ldots D_r$ be some Cartier divisors and $q\in \N$. 
Then for $m_1, \ldots, m_r \in \N \setminus \{0\}$ we have 
\begin{equation} \label{equation bound cohomology non reduced}
h^q \Bigl( Y,  \Fcal\Bigl({\sum_{j=1}^r} m_j D_j \Bigr) \Bigr) \leq 
{\sum_{i\in I}} \length_{\Ocal_{Y,{\xi_i}}}(\Fcal_{\xi_i}) 
h^q\Bigl(E_i, \Ocal_Y\Bigl( {\sum_{j=1}^r} m_j D_j\Bigr)|_{E_i} {\Bigr)} + O(m^{n-1}),
\end{equation} 
where $m={\sum_{j=1}^r} m_j$ and where $\xi_i$ is the generic point of $E_i$.
\end{lemma}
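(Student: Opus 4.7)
The plan is to apply the dévissage of Lemma \ref{lemma devissage} to $\Fcal$ and then to exploit the fact that coherent sheaves with strictly lower-dimensional support contribute only $O(m^{n-1})$ to each cohomology group, via Proposition \ref{lemma perturbation3}.

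First I would produce a filtration $0 = \Fcal_0 \subset \Fcal_1 \subset \cdots \subset \Fcal_s = \Fcal$ with subquotients $\Fcal_k/\Fcal_{k-1} \simeq \iota_{k,*}\Ical_k$, where $\iota_k\colon Z_k \hookrightarrow Y$ is an integral closed subscheme of $\supp(\Fcal)$ (so in particular $\dim Z_k \le n$) and $\Ical_k \subset \Ocal_{Z_k}$ is a coherent ideal with $\supp(\Ical_k)=Z_k$. Tensoring with the invertible sheaf $\Ocal_Y(\sum_{j=1}^r m_j D_j)$ is exact and, by the projection formula, commutes with $\iota_{k,*}$. Iterated use of the long exact sequence in cohomology then gives
\[
h^q\!\left(Y, \Fcal\bigl(\textstyle\sum_{j=1}^r m_j D_j\bigr)\right) \;\leq\; \sum_{k=1}^s h^q\!\left(Z_k, \Ical_k \otimes \iota_k^*\Ocal_Y\bigl(\textstyle\sum_{j=1}^r m_j D_j\bigr)\right).
\]

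Next I would split this sum according to $\dim Z_k$. When $\dim Z_k < n$, Proposition \ref{lemma perturbation3} applied (with empty $\pb$) to the sheaf $\iota_{k,*}\Ical_k$ on $Y$, whose support has dimension at most $n-1$, yields the uniform bound $O(m^{n-1})$. When $\dim Z_k = n$, the integrality of $Z_k$ together with $Z_k\subseteq \supp(\Fcal)$ forces $Z_k = E_i$ for some $i\in I$. Tensoring the short exact sequence
\[
0 \longrightarrow \Ical_k \longrightarrow \Ocal_{E_i} \longrightarrow \Ocal_{E_i}/\Ical_k \longrightarrow 0
\]
on $E_i$ with the invertible sheaf $\Ocal_Y(\sum_{j} m_j D_j)|_{E_i}$, and noting that $\Ocal_{E_i}/\Ical_k$ has support of dimension $<n$ (since $\Ical_k$ is a nonzero ideal in the integral scheme $E_i$, so $(\Ical_k)_{\xi_i}$ equals the function field $\Ocal_{E_i,\xi_i}$), another application of Proposition \ref{lemma perturbation3} lets me replace $\Ical_k$ by $\Ocal_{E_i}$ at the cost of $O(m^{n-1})$.

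Regrouping the top-dimensional contributions by $i\in I$ gives
\[
h^q\!\left(Y, \Fcal\bigl(\textstyle\sum_j m_j D_j\bigr)\right) \leq \sum_{i\in I} \#\{k : Z_k = E_i\}\cdot h^q\!\left(E_i, \Ocal_Y\bigl(\textstyle\sum_j m_j D_j\bigr)\big|_{E_i}\right) + O(m^{n-1}),
\]
and the remaining step is to identify $\#\{k: Z_k=E_i\} = \length_{\Ocal_{Y,\xi_i}}(\Fcal_{\xi_i})$. Localising the filtration at $\xi_i$ and using additivity of length on short exact sequences reduces this to computing $\length_{\Ocal_{Y,\xi_i}}((\iota_{k,*}\Ical_k)_{\xi_i})$ for each $k$. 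This stalk vanishes unless $\xi_i \in Z_k$, and since $E_i$ has maximal dimension in $\supp(\Fcal)$ the containment $E_i = \overline{\{\xi_i\}} \subseteq Z_k$ forces $Z_k = E_i$; in that case $(\Ical_k)_{\xi_i}$ is a nonzero ideal of the field $\Ocal_{E_i,\xi_i}$, hence equals $\Ocal_{E_i,\xi_i}$, which is the residue field of the local ring $\Ocal_{Y,\xi_i}$ and so has length $1$ as an $\Ocal_{Y,\xi_i}$-module. The main delicate point of the argument is this multiplicity matching at the generic points $\xi_i$; the uniformity of all the $O(m^{n-1})$ errors in the tuple $(m_1,\ldots,m_r)$ is guaranteed directly by the formulation of Proposition \ref{lemma perturbation3}.
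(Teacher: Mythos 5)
Your proposal is correct and follows essentially the same route as the paper: both arguments d\'evissage $\Fcal$ via Lemma \ref{lemma devissage}, discard the sub-top-dimensional pieces using Proposition \ref{lemma perturbation3}, and match the multiplicity $\length_{\Ocal_{Y,\xi_i}}(\Fcal_{\xi_i})$ to the number of top-dimensional steps in the filtration. The only cosmetic difference is that the paper runs an induction on the filtration length and invokes Lemma \ref{lemma basic} to replace $\Ical_k$ by $\Ocal_{E_i}$ on the top-dimensional subquotients, whereas you unfold the induction into an iterated long-exact-sequence estimate and do the replacement directly via the short exact sequence $0\to\Ical_k\to\Ocal_{E_i}\to\Ocal_{E_i}/\Ical_k\to 0$ together with a second application of Proposition \ref{lemma perturbation3} to the lower-dimensional quotient.
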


\begin{proof}
We proceed by induction on the length $s$ of a d\'{e}vissage of $\Fcal$ as in 
\eqref{devissage display}. The case $s=0$ means that $\Fcal =0$ and the claim is 
obvious.  So we may assume that $s \geq 1$. The corresponding  d\'evissage 
\eqref{devissage display} leads to  the short exact sequence 
\[ 0 \longrightarrow \Gcal\Bigl(\sum_{j = 1}^r m_j D_j \Bigr) \longrightarrow 
\Fcal\Bigl(\sum_{j = 1}^r m_j D_j \Bigr) \longrightarrow \Hcal\bigl(\sum_{j = 1}^r m_j D_j \Bigr) 
\longrightarrow 0 
\]
for $\Gcal  \coloneqq \Fcal_{s-1}$ and $\Hcal  \coloneqq \Fcal/ \Fcal_{s-1}$. The  long exact sequence in cohomology yields
\begin{align} \label{eq les1}
h^q\Bigl(Y, \Fcal\Bigl(\sum_{j = 1}^r m_j D_j \Bigr)\Bigr) \leq  
h^q\Bigl(Y, \Gcal\Bigl(\sum_{j = 1}^r m_j D_j \Bigr)\Bigr) +  
h^q\Bigl(Y, \Hcal\Bigl(\sum_{j = 1}^r m_j D_j \Bigr)\Bigr).
\end{align}
 By definition of the d\'evissage, 
 $\Hcal \simeq \varphi_*(\Ical)$ where 
 $\varphi\colon Z \to Y$ is an integral closed subscheme of $Y$ and $\Ical \subset \Ocal_Z$ is a coherent sheaf of ideals with $\supp(\Ical)=Z$. 
 By  projection formula \cite[Exercise II.5.1 (d)]{Hart} and 
by \cite[III 2.10]{Hart}, we deduce 
\begin{equation} \label{eq push forward}
H^q \Bigl(Y,  \Hcal\Bigl(\sum_{j = 1}^r m_j D_j \Bigr)\Bigr) 
\simeq    
H^q\Bigl(Z, \varphi^* \Bigl(\Ocal_Y\Bigl(\sum_{j = 1}^r m_j D_j \Bigr)\Bigr) \otimes \Ical\Bigr).
\end{equation}

\emph{Case 1.} 
If $\dim(Z)<n$, then 
$h^q(Y, \Hcal(\sum_{j = 1}^r m_j D_j ))  = O(m^{n-1})$ by Proposition \ref{lemma perturbation3},  
hence \eqref{eq les1} yields 
\begin{equation} \label{bound above} 
h^q\Bigl(Y, \Fcal\Bigl(\sum_{j = 1}^r m_j D_j \Bigr)\Bigr) 
\leq h^q\Bigl(Y,  \Gcal\Bigl(\sum_{j = 1}^r m_j D_j \Bigr)\Bigr) + O(m^{n-1}).
\end{equation}
Since $\Hcal$ is the push forward of $\Ical$ from $Z$, the assumption in Case 1 yields $\Hcal_{\xi_i} = 0$ for all $i \in
I$. 
Since the length is additive, we deduce
$\length_{\Ocal_{Y,{\xi_i}}}(\Fcal_{\xi_i}) = \length_{\Ocal_{Y,{\xi_i}}}({\Gcal}_{\xi_i})$ for all $i\in I$. 
Hence the result follows from \eqref{bound above} by the induction hypothesis applied to $\Gcal$.

\emph{Case 2.}
If $\dim Z = n$, then $Z = E_{i_0}$ for some $i_0\in I$. Then the stalk $I_\xi$ at the generic point $\xi$ of $Z$ is a non-zero ideal in the field $\Ocal_{Z,\xi}$ and hence equal to this field. Since $\xi$ is in the support of $\Fcal$, it is lying over a closed point $\eta$ in the base scheme $S$ and hence $Z$ may be viewed as a variety over the residue field of $\eta$. So we may apply 
 Lemma \ref{lemma basic} to the right hand side of \eqref{eq push forward} with $\rank(\Ical)=1$ to get 
\begin{equation} \label{bound in case 2}
h^q\Bigl(Y, \Gcal\Bigl(\sum_{j = 1}^r m_j D_j \Bigr)\Bigr)
=h^q\Bigl(E_{i_0}, \Bigl(\Ocal_Y\Bigl(\sum_{j = 1}^r m_j D_j \Bigr)\Bigr)\Big|_{E_{i_0}}
\Bigr) + O(m^{n-1}).
\end{equation}
Using the additivity of the length, we have 
$
\length_{\Ocal_{Y,{\xi_i}}}(\Fcal_{\xi_i})  = 
\length_{\Ocal_{Y,{\xi_i}}} ({\Gcal}_{\xi_i}) $ for 
$ i \neq i_0 $ and 
$\length_{\Ocal_{Y,{\xi}}}
(\Fcal_{\xi}) 
= 
\length_{\Ocal_{Y,{\xi}}}
({\Gcal}_{\xi})+1$. Hence 
the result follows from \eqref{eq les1} and  \eqref{bound in case 2} using the induction hypothesis applied to $\Gcal$.
\end{proof}

\subsection{Volumes and asymptotic cohomological functions}
\label{sec:higher-volumes}

In this subsection, we assume that $Y$ is a projective variety  over a field 
$k$. We will recall the volume of a Cartier divisor
and its higher cohomological analogues.  
We fix $D$ a Cartier divisor on $Y$. 

\begin{art} \label{classical volume}
The {\it volume} of $D$ or of the corresponding line bundle $L=\Ocal(D)$ is defined by
\[ \vol(D) \coloneqq \vol(L) \coloneqq \limsup_m \frac{h^0(Y,\Ocal_Y(mD))}{m^n / n!}.\]
Since $h^0(Y,\Ocal_Y(mD)) = O(m^n)$, one gets easily that $\vol(D) \in \R_{\geq 0}$.
Actually the $\limsup$ is a $\lim$. 
This  follows from  Fujita's approximation theorem when $k$ is algebraically closed 
(cf.~\cite[11.4.7]{Laz2} {for characteristic zero
and use \cite{takagi2007} in characteristic $p>0$}).
For arbitrary fields, we refer to \cite[Thm.~8.1]{Cut15}.
\end{art}

\begin{rem}
\label{remark volume nef}
If $D$ is nef, then $\vol(D) = D^n$ (cf.~\cite[Cor.~1.4.41]{Laz1}).
\end{rem}

Alex K\"uronya has introduced and studied the following higher volume-type
invariants in \cite{Kur06} called \emph{asymptotic cohomological
functions}.  

\begin{defi} \label{Kueronya} 
For $0\leq i \leq n$, the  \emph{asymptotic cohomological
  function} $\hhi(Y,D)$ is defined by 
\begin{equation}
  \label{eq:8}
  \hhi(Y,D) \coloneqq \limsup_m \frac{h^i(Y,\Ocal_Y(mD))}{m^n / n!}.
\end{equation}
\end{defi}
For $i=0$, we get the volume. For $i>0$, it seems to be unknown if $\limsup$ is a limit.
In case $k=\C$, K\"uronya showed that $ \hhi(Y,D)$ is homogeneous in $D$ and extends uniquely to a continuous homogeneous function $N^1(Y) \to \R_{\geq 0}$. In fact, the arguments work for 
every algebraically closed base field $k$. We will prove in \S \ref{subsection volumes} a weaker continuity property which holds over any field $k$.

\subsection{Asymptotic cohomological functions for real divisors}
\label{subsection volumes}
\label{subsection rounds}

In this subsection, we assume that $Y$ is an $n$-dimensional projective scheme  over
a field $k$. As promised in \S \ref{sec:higher-volumes}, we will extend K\"uronya's asymptotic cohomological functions  
to $\Div_\R(Y)  \coloneqq \Div(Y) \otimes_\Z \R$ and we will characterize them by homogenity and continuity. Note that K\"uronya proved stronger results in the special case of a projective variety over an algebraically closed field (see  \ref{Kueronya}).

\begin{defi}
\label{defi rounds}
Let $D\in \Div_\R(Y)$. Then we have 
$ D = \sum_{i=1}^r a_i D_i$
for suitable $a_i \in \R$ and $D_i \in \Div(Y)$. We call this a \emph{decomposition} $\Dcal$ of $D$. 
We define the \emph{round-up  of $D$} with respect to $\Dcal$ to be 
\[ \lceil D \rceil_\Dcal \coloneqq \sum_{i=1}^r \lceil a_i \rceil D_i \in \Div(Y)\]
and for $q\in \N$ we set 
$h^q(D)_\Dcal \coloneqq h^q(Y, \Ocal_Y( \lceil D \rceil_\Dcal))$.
\end{defi}

\begin{rem}
\label{rem rounds}
{The above definitions indeed depend on the choice of a given decomposition $\Dcal$.
Similar methods are used in  \cite[Thm.~3.5 (i)]{Ful15}.
One can also define canonical round-downs and round-ups for $\R$-Weil divisors \cite[section 9.1]{Laz2}.}
\end{rem}

\begin{lemma}
\label{lemma lattice}
Let $V$ be a finitely generated $\Z$-module and let $x \in  V\otimes_\Z \R$. 
We consider two decompositions 
$x= \sum_{i=1}^p x_i v_i= \sum_{j=1}^q y_j w_j$ 
with $x_i,y_j \in \R$ and $v_i,w_j\in V$. Then  the set 
$ \Scal  \coloneqq 
\{ { \sum_{i=1}^p} \lceil mx_i\rceil v_i - {\sum_{j=1}^q} \lceil my_j\rceil w_j \st m\in \Z \}
$  
is finite. 
\end{lemma}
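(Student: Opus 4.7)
The plan is to reduce to the elementary fact that a bounded subset of a lattice in $\R^d$ is finite. The essential observation is that for any real number $t$, the difference $\lceil t\rceil - t$ lies in $[0,1)$, so we can write $\lceil mx_i\rceil = mx_i + \alpha_i(m)$ and $\lceil my_j\rceil = my_j + \beta_j(m)$ with $\alpha_i(m),\beta_j(m)\in[0,1)$.

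Using the hypothesis $\sum_i x_iv_i = x = \sum_j y_jw_j$ in $V\otimes_\Z\R$, the $m$-linear terms cancel and every element of $\Scal$, viewed inside $V\otimes_\Z\R$, takes the form
\[
\sum_{i=1}^p \alpha_i(m) v_i - \sum_{j=1}^q \beta_j(m) w_j,
\]
with all coefficients in $[0,1)$. In particular, with respect to any norm on the finite-dimensional $\R$-vector space $V\otimes_\Z\R$, the elements of $\Scal$ lie in a bounded set independent of $m$.

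Now I would invoke the structure theorem: write $V = V_{\tors}\oplus F$ with $F\simeq\Z^d$ free of rank $d$. The torsion part $V_{\tors}$ is finite, so it is enough to show the image of $\Scal$ in $F$ (under the projection $V\to F$) is finite. That image sits inside the lattice $F\hookrightarrow F\otimes_\Z\R\simeq\R^d$, and by the previous paragraph it is a bounded subset of this lattice, hence finite.

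There is no genuine obstacle here; the only subtlety is noticing that after subtraction the $m$-linear contribution disappears precisely because both decompositions represent the same $x$, so what remains is a bounded combination of fixed vectors. The bookkeeping is trivial once the torsion is peeled off.
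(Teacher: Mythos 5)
Your proof is correct and follows essentially the same route as the paper: bound the elements of $\Scal$ in $V\otimes_\Z\R$ using that ceilings differ from their arguments by at most $1$, conclude that the image in $V\otimes_\Z\R$ is a bounded subset of a lattice and hence finite, and then account for the (finite) torsion. The paper packages the torsion step by noting that the kernel of $V\to V\otimes_\Z\R$ is finite rather than by explicitly writing $V = V_{\tors}\oplus F$, but this is only a cosmetic rephrasing of your argument.
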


\begin{proof}
Let us put a euclidean norm $\| \ \|$ on $V_\R \coloneqq V\otimes_\Z \R$. 
For all $m\in \N$, we have   
\[ 
\Bigl\| \Bigl( \sum_{i =1}^p \lceil mx_i\rceil v_i \Bigr) 
-mx \Bigr\| \leq K_1 \coloneqq  \sum_{i = 1}^p \| v_i\| .\]
Similarly, there exists $K_2 \in \R$ for the second decomposition and hence we get 
\[ 
\Bigl\| \Bigl( \sum_{i =1}^p \lceil mx_i\rceil v_i \Bigr) - 
\Bigl( \sum_{j = 1}^q \lceil my_j\rceil w_j\Bigr) \Bigr\| \leq K
\]
for $K\coloneqq K_1 + K_2$. On the other hand  $( \sum_{i =1}^p \lceil mx_i\rceil v_i ) - ( \sum_{j = 1}^q \lceil my_j\rceil w_j) \in V$. 
Since a given ball in $V_\R$ contains only finitely many points in the lattice $\im (V \to V_\R)$, we deduce that the image of $\Scal$ in $V_\R$ is finite. 
The claim follows from the fact that the kernel of the map $V \to V_\R$ is the group of torsion elements which is finite as  $V$ is finitely generated. 
\end{proof}

In the following, we will use linear equivalence $D \sim E$ for real divisors $D,E \in \Div(Y)_\R$ meaning that $D,E$ have the same image in ${\rm Pic}(Y)\otimes_\Z \R$. 

\begin{lemma}
\label{lemma comparison rounds}
Let $D, E\in \Div(Y)_\R$ be real Cartier divisors  with decompositions $\Dcal$ and $\Ecal$.
If $D \sim E$, then there exists $C >0$ such that for all $m, q\in \N$ 
\[ \left| h^q(mD)_\Dcal - h^q(mD)_\Ecal \right| \leq C m^{n-1}. \]
\end{lemma}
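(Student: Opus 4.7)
The plan is to prove the bound in two movements: first, reduce the difference between $\lceil mD\rceil_\Dcal$ and $\lceil mE\rceil_\Ecal$ to a twist by one of \emph{finitely many} fixed line bundles, independent of $m$; and second, apply the cohomological perturbation estimate \eqref{eq easy 2} proved inside the proof of Proposition \ref{lemma perturbation3} to each such twist. Writing the decompositions as $\Dcal\colon D=\sum_i a_i D_i$ and $\Ecal\colon E=\sum_j b_j E_j$, the first goal is to verify that the set
\[
\bigl\{[\lceil mD\rceil_\Dcal]-[\lceil mE\rceil_\Ecal] \bigm| m\in\N\bigr\}\subset \Pic(Y)
\]
is finite.

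To establish this finiteness I will apply Lemma \ref{lemma lattice} to the finitely generated $\Z$-module $P\subset \Pic(Y)$ generated by the classes $[D_i]$ and $[E_j]$. The hypothesis $D\sim E$ says precisely that $[D]$ and $[E]$ coincide in $\Pic(Y)_\R$, so they produce a common element $x\in P\otimes_\Z\R$ presented by the two decompositions $x=\sum_i a_i[D_i]=\sum_j b_j[E_j]$. Lemma \ref{lemma lattice} then yields finitely many line bundles $L_1,\dots,L_N$ such that for every $m\in\N$ one has $\Ocal_Y(\lceil mD\rceil_\Dcal)\simeq \Ocal_Y(\lceil mE\rceil_\Ecal)\otimes L_{k(m)}$ for some index $k(m)\in\{1,\dots,N\}$.

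It remains to control the cohomological cost of one such twist. To apply \eqref{eq easy 2} I have to express $\Ocal_Y(\lceil mE\rceil_\Ecal)$ as $\Fcal(\mb)$ with $\Fcal=\Ocal_Y$ and $\mb\in(\N\setminus\{0\})^r$ in the notation of \S\ref{subsection infinitesimal}. The natural fix for the possibly negative coefficients $b_j$ is to split the indices according to the sign of $b_j$ and use the fixed line bundles $\Ocal_Y(E_j)$ when $b_j>0$ and $\Ocal_Y(-E_j)$ when $b_j<0$ (discarding the $j$'s with $b_j=0$); for $m$ large enough the resulting exponents $\lceil mb_j\rceil$ or $-\lceil mb_j\rceil$ are strictly positive integers of total size $O(m)$. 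Applying \eqref{eq easy 2} with twist $L=L_k$ for each $k$ then yields
\[
|h^q(Y,\Ocal_Y(\lceil mE\rceil_\Ecal)\otimes L_k) - h^q(Y,\Ocal_Y(\lceil mE\rceil_\Ecal))|\le C_k m^{n-1},
\]
and I take $C$ to be the maximum of the $C_k$, enlarged if necessary to absorb the finitely many small values of $m$.

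The main difficulty is the passage between the divisor-theoretic picture (where the round-ups live) and the Picard-theoretic picture in which the hypothesis $D\sim E$ is a non-trivial equality: one must make sure that Lemma \ref{lemma lattice} is applied to a $\Z$-module in which the two decompositions really do present the same element, and the correct choice is the image $P\subset\Pic(Y)$ of the finitely generated subgroup of $\Div(Y)$ spanned by the $D_i$ and $E_j$, rather than the free module on these divisors. Once this module is in place, the finiteness of twists and the sign-split application of \eqref{eq easy 2} combine to give the statement without any further analytic input.
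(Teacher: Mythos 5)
Your argument tracks the paper's own proof essentially step for step: both pass to the subgroup $V$ of $\Pic(Y)$ generated by the classes of the $D_i$ and $E_j$, invoke Lemma \ref{lemma lattice} with the two presentations of the common image of $D\sim E$ in $V_\R$ to get a finite set of twist classes $\Scal\subset\Pic(Y)$, and then apply \eqref{eq easy 2} once per element of $\Scal$. Your sign-split of the coefficients $b_j$ into the line bundles $\Ocal_Y(E_j)$ and $\Ocal_Y(-E_j)$ (with small $m$ absorbed into the constant) is a small but genuine piece of extra care: the paper applies \eqref{eq easy 2} with exponents $\lceil mb_j\rceil$ that can be negative, and simply writes the error as $C_G(1+\sum_j\lceil mb_j\rceil)^{n-1}$ without making this explicit, so you have flagged and fixed a point the published proof leaves implicit. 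Beyond that refinement the two proofs coincide.
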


\begin{proof}
Let $D= \sum_{i=1}^r a_i D_i$ be the decomposition $\Dcal$ and let $E=\sum_{j=1}^s b_j E_j$ be the decomposition $\Ecal$. 
The images of  $D_1, \dots, D_r, E_1, \dots, E_s$ in $\Pic(Y)$ generate a subgroup $V$. Let $\pi\colon \Div(Y) \to \Pic(Y)$ be 
the canonical homomorphism.   
Using 
$\sum_{i =1}^r a_i \pi(D_i) = \sum_{j =1}^s b_j \pi(E_j)$ in $V_\R$ and
Lemma \ref{lemma lattice},  
$\Scal \coloneqq \biggl\{ \sum \limits_{i=1}^r \lceil ma_i\rceil \pi(D_i) - \sum \limits_{j = 1}^s \lceil mb_j \rceil \pi(E_j) \bigg| m\in \N \biggr\}$ is a finite subset of $\Pic(Y)$.
We fix representatives $G \in \Div(Y)$ of the elements in $\Scal$. Then \eqref{eq easy 2} yields  a constant $C_G$ such that for all $m\in \N$, 
\[ 
\biggl| h^q\Bigl(Y, \Ocal_Y\Bigl(\sum_{j=1}^s \lceil mb_j\rceil E_j + G \Bigr) \Bigr)
- h^q\Bigl(Y, \Ocal_Y\Bigl(\sum_{j = 1}^s \lceil mb_j\rceil E_j\Bigr)\Bigr) \biggr| \leq C_G \Bigl(1+\sum_{j = 1}^s \lceil mb_j \rceil\Bigr)^{n-1}.
\]
Using $h^q(mD)_\Dcal = h^q(Y, \Ocal_Y(\sum_{j =1}^s \lceil mb_j\rceil E_j + G ){)}$ for a suitable representative $G$ and finiteness of $\Scal$, we easily deduce the claim.
\end{proof}

\begin{rem}
\label{rem asymptotic decomposition}

We are interested in the asymptotics of  $h^q(m_1D_1+ \dots + m_r D_r)_\Dcal$ for real divisors $D_1,\dots, D_r$ with respect to decompositions $\Dcal_k$ of $D_k$ and $\Dcal  \coloneqq \coprod_{k=1}^r \Dcal_k$. An obvious generalization of Lemma \ref{lemma comparison rounds} shows that this function depends only on the linear equivalence classes of $D_1, \dots, D_r$ and is independent of the choice of the decompositions $\Dcal_k$ up to an error term of the form $O(m^{n-1})$ for $m  \coloneqq \sum_{k=1}^r m_k$. We use the notation $h^q(m_1D_1+ \dots + m_r D_r)$ 
which is a well defined function in $(m_1, \dots, m_r)$  up to $O(m^{n-1})$.
\end{rem}

\begin{defi} \label{Kueronya for real divisors}
For $D \in \Div(Y)_\R$ and $0 \leq q \leq n$, we define
\[
\widehat{h}^q(Y,D)  \coloneqq \limsup_m \frac{ h^q(Y,m D )}{m^n/n!}.
\]
\end{defi}

By \ref{rem asymptotic decomposition}, the value of $\widehat{h}^q(Y,D)$ depends only on the linear equivalence class 
of $D$ and is independent of the decomposition chosen to calculate $h^q(Y, mD)$.

\begin{lemma}
\label{lemma perturbation real}
Fix $D_1\sim D_1',\ldots, D_r\sim D_r',E_1,\ldots,E_s \in \Div(Y)_\R$ 
and $q\in \N$.
There exists $C\in \R$ (depending on the linear equivalence classes of 
$D_1,\ldots,D_{r}, E_1,\ldots,E_s$) such that for all  
$m_1,\ldots,m_{r}, p_1,\ldots,p_s \in \R_{\geq 0}$ and for 
$m={\sum_{i=1}^r} m_i$ and $p={\sum_{j=1}^s} p_j$, we have 
\begin{equation}
\label{eq easy 4}
\biggl| h^q \Bigl(Y, \sum_{i=1}^{r} m_i D_i + \sum_{j=1}^s p_jE_j\Bigr) 
-  h^q \Bigl(Y, \sum_{i=1}^r m_i D_i' \Bigr) \biggr| 
\leq C p (m+p)^{n-1}+ O(d^{n-1})
\end{equation}
for $d  \coloneqq m+p+1$ 
and
\begin{equation}
\label{eq easy 4'}
\biggl| \widehat{h}^q \Bigl(Y, \sum_{i=1}^{r} m_i D_i + \sum_{j=1}^s p_jE_j\Bigr) 
- \widehat{h}^q \Bigl(Y, \sum_{i=1}^r m_i D_i' \Bigr) \biggr| 
\leq  n! \, C p (m+p)^{n-1}.
\end{equation}

\end{lemma}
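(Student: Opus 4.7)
The plan is to insert $h^q(Y, \sum m_i D_i)$ as an intermediate expression in a triangle inequality and bound the two resulting differences separately. Since $D_i \sim D_i'$ for every $i$, the multivariate analogue of Lemma \ref{lemma comparison rounds} described in Remark \ref{rem asymptotic decomposition} controls
$$\Bigl| h^q\Bigl(Y, \sum_{i=1}^r m_i D_i \Bigr) - h^q\Bigl(Y, \sum_{i=1}^r m_i D_i'\Bigr)\Bigr| = O(m^{n-1}) = O(d^{n-1}).$$
This handles the primed-vs-unprimed discrepancy and produces the $O(d^{n-1})$ remainder in \eqref{eq easy 4}.

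To control the remaining difference, I would rewrite both cohomology groups using the round-up with respect to a single decomposition $\Dcal$ containing all $D_i$ and $E_j$, dropping any terms whose rounded coefficient equals zero so that Proposition \ref{lemma perturbation3} becomes applicable. I would then apply that proposition with the roles swapped: the $E_j$'s play the role of the ``$M_i$'s'' (the factors whose powers are being removed), and the $D_i$'s play the role of the ``$P_j$'s'' (the factors kept fixed). Writing $m' = \sum_{i=1}^r \lceil m_i \rceil \leq m + r$ and $p' = \sum_{j=1}^s \lceil p_j \rceil \leq p + s$, the proposition yields a bound of the form $C_0 \cdot p' \cdot (m' + p')^{n-1}$. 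A short elementary manipulation, bounding $(m'+p')^{n-1}$ by a constant multiple of $(m+p)^{n-1} + (r+s)^{n-1}$ and splitting $p' = p + (p'-p)$ with $p' - p$ uniformly bounded by $s$, lets one absorb the additive constants into an $O(d^{n-1})$ remainder. The outcome is the bound $C p (m+p)^{n-1} + O(d^{n-1})$ claimed in \eqref{eq easy 4}.

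For \eqref{eq easy 4'}, I would apply \eqref{eq easy 4} to the scaled coefficients $(N m_i, N p_j)$ for $N \in \N$. The right-hand side becomes $C \cdot (Np) \cdot (N(m+p))^{n-1} + O((Nd)^{n-1}) = C N^n p (m+p)^{n-1} + O(N^{n-1})$. Dividing through by $N^n / n!$ and passing to $\limsup_{N \to \infty}$ on both sides kills the subleading term and produces the factor $n!$ appearing in the stated estimate.

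The main obstacle is the bookkeeping around the rounding operation. Proposition \ref{lemma perturbation3} requires strictly positive integer exponents, so zero coefficients must be discarded and their contributions absorbed into the $O(d^{n-1})$ remainder; likewise, the gap between $\lceil N m_i \rceil$ and $N \lceil m_i \rceil$ in the scaling step contributes subleading corrections that must be tracked consistently. None of this is conceptually hard, but the inequalities must be carried through carefully so that the leading term $C p(m+p)^{n-1}$ separates cleanly from the $O(d^{n-1})$ error.
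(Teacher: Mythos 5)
Your proposal is correct and follows essentially the same route as the paper, whose proof is simply the terse remark that \eqref{eq easy 4} ``follows directly from Proposition \ref{lemma perturbation3} after choosing decompositions of $D_i$ and $E_j$'' and that \eqref{eq easy 4'} is ``an asymptotic consequence.'' Your write-up makes explicit two things the paper's one-liner compresses: the triangle-inequality split that settles the $D_i$-versus-$D_i'$ discrepancy via Remark \ref{rem asymptotic decomposition} (which produces the $O(d^{n-1})$ remainder), and the observation that in Proposition \ref{lemma perturbation3} the $E_j$'s must play the role of the $M_i$'s (the tensor factors being removed) while the $D_i$'s play the $P_j$'s, so the main term $Cp(m+p)^{n-1}$ comes out with the correct exponents. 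The scaling argument for \eqref{eq easy 4'} is right as well; the only caution worth noting is that since $\widehat{h}^q$ is defined as a $\limsup$, one should pass to $\limsup$ in each of the two one-sided inequalities $h^q(ND)\le h^q(ND')+\cdots$ and the reverse, rather than directly in the absolute-value inequality. Finally, when you say ``drop terms with zero rounded coefficient,'' you should also arrange the chosen decomposition to have nonnegative coefficients (absorbing signs into the integral divisors $D_{ik}$, $E_{jl}$) so that, for $m_i, p_j\ge 0$, the round-ups are automatically nonnegative and Proposition \ref{lemma perturbation3} applies; this is part of the bookkeeping you already flagged.
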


\begin{proof}
The bound \eqref{eq easy 4} follows directly from Proposition \ref{lemma perturbation3}
after choosing decompositions of $D_i$ and $E_j$ for all $i, j$. Then \eqref{eq easy 4'} is an asymptotic consequence of \eqref{eq easy 4}.
\end{proof}

\begin{prop} \label{lemma volume real divisors'}
For any $q \in \N$, the function $\widehat{h}^q$ is homogeneous of degree $n$ on $\Div(Y)_\R$ and continuous on every finite dimensional $\R$-subspace with respect to any norm. 
\end{prop}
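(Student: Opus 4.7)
The plan is to deduce both statements from the perturbation estimates of Lemma~\ref{lemma perturbation real}: I would first establish continuity on $V$, and then combine rational homogeneity with this continuity to obtain homogeneity for all positive real scalars.

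For continuity at a point $D_0$ of a finite-dimensional subspace $V\subseteq \Div(Y)_\R$, the key issue is that estimate \eqref{eq easy 4'} requires both the base coefficients and the perturbation coefficients to be non-negative, whereas an arbitrary $D_0$ written in a chosen basis $D_1,\ldots,D_r$ of $V$ may involve coefficients of either sign. I would resolve this by working with the doubled generating set $\{D_1,\ldots,D_r,-D_1,\ldots,-D_r\}$: writing $D_0 = \sum x_i^0 D_i$ and splitting $x_i^0 = (x_i^0)^+ - (x_i^0)^-$, one expresses
$$D_0 = \sum_{i=1}^{r} (x_i^0)^+ D_i + \sum_{i=1}^{r} (x_i^0)^- (-D_i)$$
as a non-negative combination. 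For a nearby $D = D_0 + \sum \epsilon_i D_i$, decomposing $\epsilon_i = \epsilon_i^+ - \epsilon_i^-$ represents the perturbation as $\sum \epsilon_i^+ D_i + \sum \epsilon_i^- (-D_i)$, again with non-negative coefficients. Applying \eqref{eq easy 4'} with $D_i' = D_i$ then yields a bound of the form $n!\,C\,p\,(m+p)^{n-1}$, where $m$ depends only on $D_0$ and $p = \sum (\epsilon_i^+ + \epsilon_i^-) \to 0$ as $D \to D_0$. This gives continuity at $D_0$, and equivalence of norms on the finite-dimensional space $V$ then yields continuity with respect to any norm.

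For homogeneity, I would first establish $\widehat{h}^q(Y,ND) = N^n\,\widehat{h}^q(Y,D)$ for $N\in\N_{>0}$. The bound $\leq$ is the immediate consequence of the substitution $m' = mN$ in the limsup defining $\widehat{h}^q(Y,ND)$, realising it as $N^n$ times a limsup along the sub-sequence $N\,\N$. For the reverse inequality I would pick a sequence $m_k\to\infty$ realising $\widehat{h}^q(Y,D)$, write $m_k = N\ell_k + s_k$ with $0 \le s_k < N$, and apply \eqref{eq easy 4} to the decomposition $m_k D = N\ell_k D + s_k D$ (base divisor $N\ell_k D$, bounded perturbation $s_k D$) to obtain $h^q(Y,m_k D) - h^q(Y,N\ell_k D) = O(m_k^{n-1})$. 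Dividing by $m_k^n/n!$ and using $m_k/(N\ell_k)\to 1$ shows that the sub-sequence $h^q(Y,N\ell_k D)/((N\ell_k)^n/n!)$ also converges to $\widehat{h}^q(Y,D)$, which yields $\widehat{h}^q(Y,ND) \geq N^n\,\widehat{h}^q(Y,D)$. Combining this identity for pairs of integers gives rational positive homogeneity, and the continuity of $t\mapsto \widehat{h}^q(Y,tD)$ on the line $\R D\subseteq V$ proved above extends the identity to all $\lambda\in\R_{>0}$ by density of $\Q_{>0}$ in $\R_{>0}$.

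The main obstacle is the continuity argument: estimate \eqref{eq easy 4'} is asymmetric, controlling only non-negative perturbations of divisors written with non-negative coefficients, so a direct application fails at points of $V$ whose basis expansion has negative coefficients or for perturbations in ``negative'' directions. The trick of doubling the generating set is what makes the lemma applicable in the generality required to cover every point of $V$ and every direction of perturbation; once this is in place, the rest of the proof reduces to routine manipulations of limsups and rescaling of subsequences.
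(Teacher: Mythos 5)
Your proof is correct, but the route to homogeneity differs from the paper's in a genuine way. The paper's argument handles an arbitrary real $\lambda>0$ in a single step: it writes $m\lambda = k_m + r_m$ with $k_m\in\N$ and $0\le r_m\le 1$, applies estimate \eqref{eq easy 4} to bound $|h^q(Y,m\lambda D) - h^q(Y,k_m D)|$ by $O(m^{n-1})$, divides by $m^n/n!$, and passes to the $\limsup$ to get $\widehat{h}^q(Y,\lambda D)\leq \lambda^n\widehat{h}^q(Y,D)$; the reverse inequality comes from substituting $D\mapsto\lambda^{-1}D$. Crucially, this never invokes continuity, so homogeneity and continuity are obtained independently of one another. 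Your proof instead builds up from the integer case $\widehat{h}^q(Y,ND)=N^n\widehat{h}^q(Y,D)$ (via rescaled limsups for $\le$ and an Euclidean-division-plus-perturbation argument for $\ge$), passes to rationals by dividing, and then bootstraps to real $\lambda$ by density and the continuity you proved first. Both are sound; the paper's is shorter and makes the two halves of the statement logically independent, while yours is perhaps more transparent in its elementary mechanism but turns continuity into a prerequisite for full homogeneity. On continuity itself, the paper merely says it follows from \eqref{eq easy 4'} without spelling out how; your observation that Lemma \ref{lemma perturbation real} controls only non-negative perturbation coefficients, and the fix of doubling the spanning set to $\{D_1,\ldots,D_r,-D_1,\ldots,-D_r\}$ so that every perturbation direction becomes a non-negative combination, is exactly the missing detail. (A minor simplification: it suffices to apply the doubling to the perturbation directions alone, keeping a single base divisor $D_0$ with coefficient $1$ — the base coefficients need not be split into positive and negative parts, only the perturbation's — but your version is equally valid.)
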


\begin{proof}
To prove homogenity, we choose $\lambda >0$. For every non-zero $m \in \N$, there are $k_m \in \N$ and $r_m \in \R$ with $ m \lambda  = k_m +  r_m$ and $0 \leq r_m \leq 1$. By \eqref{eq easy 4}, we have 
\begin{equation} \label{homogenity 1}
|{h}^q(Y, m \lambda D) - h^q(Y,  k_m D)| 
\leq C r_m (k_m + r_m)^{n-1} +O(m^{n-1}) = O(m^{n-1}).
\end{equation}
Dividing \eqref{homogenity 1} by $m^n/n!= (k_m)^n/(n! \lambda^n) + O(m^{n-1})$ and passing to the $\limsup$, we get
\[
\widehat{h}^q(Y, \lambda D) \leq \lambda^n\widehat{h}^q(Y,  D).
\]
Replacing $D$ by $\lambda^{-1}D$, we get the reversed inequality for $\mu \coloneqq\lambda^{-1}$ instead of $\lambda$. This proves homogenity.  Continuity on finite dimensional subspaces follows from \eqref{eq easy 4'}.
\end{proof}

\begin{rem}  \label{compare with volumes}
If $Y$ is a projective variety over the field $k$, we call $\widehat{h}^0(Y,D)$ the {\it volume} of $D \in \Div(Y)_\R$ extending the classical notion from \ref{classical volume} to real Cartier divisors. Then we claim {that the $\limsup$ in the definition of $\vol$ is actually a limit, thus} 
\begin{equation} \label{limit for real volumes}
\vol(D)=\lim_{m \to \infty} \frac{h^0(mD)}{m^n/n!}. 
\end{equation} 
\end{rem}
\begin{proof}
For $D \in \Div(Y)$, this follows from a result of Cutkosky \cite[Thm.~8.1]{Cut15}. 
For $D \in \Div(Y)_\Q$, there is a non-zero $e \in \N$ with $eD$ represented by a Cartier divisor $D'$ on $Y$. Applying the previous case to $D'$ and using \eqref{eq easy 4}, we deduce that 
$$\vol(D')= \lim_{k \to \infty} \frac{h^0(kD')}{k^n/n!}=   \lim_{k \to \infty} \frac{h^0(kD'+ rD)}{k^n/n!}= e^n \lim_{k \to \infty} \frac{h^0((ke+ r)D)}{(ke+r)^n/n!}$$
for $r=0, \dots, e-1$. By homogenity of the volume, we get \eqref{limit for real volumes} for $D \in \Div(Y)_\Q$.  

To prove the claim for $D \in \Div(Y)_\R$, we choose a finite dimensional real subspace $W$ which has a basis $D_1, \dots, D_r$ in $\Div(Y)_\Q$ and with $D \in W $. For $\varepsilon >0$, pick $D' \in \Div(Y)_\Q$  with distance to $D$ in $W$ bounded by $\varepsilon$. By \eqref{eq easy 4}, there is $C \in \R_{\geq 0}$ independent of $\varepsilon$ and $m$ with $h^0(Y,mD)-h^0(mD')\leq C  \varepsilon m^n$ . Then  \eqref{limit for real volumes} for $D'$ yields  \eqref{limit for real volumes} for $D$.
\end{proof}

\subsection{Asymptotic formulas for families of real divisors}
\label{subsection asymptotic family}
In this subsection,  $Y$ is a projective variety over a field $k$. We will use the continuity of the asymptotic cohomological
functions in Proposition \ref{lemma volume real divisors'} to derive asymptotic
estimates for real divisors. Since we are using the asymptotic
cohomological functions we obtain only estimates up to
$o\left(m^n\right)$ and not up to $O\left(m^{n-1}\right)$, but these
will be enough for our applications.

\begin{prop}
\label{prop asymptotic}
For $D_1,\ldots,D_r \in \Div(Y)_\R$, there is $\rho\colon\N \to \R_{\geq 0}$ with $\rho(m) = o(m^n)$ for $m \to \infty$ such that for all non-zero $m_1,\ldots,m_r\in \N$ and 
$m\coloneqq \sum_{i = 1}^r m_i$, we have 
\begin{equation} \label{item asymptotic cohomology}
h^q\Bigl(Y,  {\sum_{i=1}^r} m_i D_i \Bigr) \leq \frac{m^n}{n!} \widehat{h}^q\Bigl(Y, { \sum_{i=1}^r} \frac{m_i}{m}D_i\Bigr) + \rho(m)
\end{equation}
and for  $q=0$, we even have 
$\big| h^0(Y,{ \sum_{i=1}^r} m_i D_i ) - \frac{1}{n!} \vol({\sum_{i=1}^r} m_i D_i ) \big| \leq \rho(m)$.
\end{prop}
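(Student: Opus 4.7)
The plan is to upgrade the pointwise $\limsup$ defining $\widehat{h}^q$ to a uniform asymptotic bound by a compactness argument on the standard simplex
$\Delta \coloneqq \{t \in \R^r_{\geq 0} \mid \sum_i t_i = 1\}$. For $t \in \Delta$ set $D_t \coloneqq \sum_i t_i D_i \in \Div(Y)_\R$, so that a tuple $(m_1, \ldots, m_r) \in \N^r$ with $m = \sum m_i$ corresponds to $t = (m_i/m) \in \Delta \cap \Q^r$ and satisfies $\sum m_i D_i = m D_t$. By Proposition \ref{lemma volume real divisors'} the map $t \mapsto \widehat{h}^q(Y, D_t)$ is continuous on the compact simplex $\Delta$ and therefore uniformly continuous. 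Given $\varepsilon > 0$, pick $\delta > 0$ controlling this uniform continuity and cover $\Delta$ by finitely many balls $B(t^{(k)}, \delta/2)$, $k = 1, \ldots, N$, with rational centres $t^{(k)} \in \Delta \cap \Q^r$; let $d_k \in \N$ be such that $d_k t^{(k)} \in \N^r$.

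At each centre, the definition of $\widehat{h}^q$ applied to the integer divisor $d_k D_{t^{(k)}}$ produces $\ell_k$ such that
\[
h^q(Y, \ell d_k D_{t^{(k)}}) \leq \frac{(\ell d_k)^n}{n!} \bigl(\widehat{h}^q(Y, D_{t^{(k)}}) + \varepsilon\bigr) \quad \text{for all } \ell \geq \ell_k.
\]
Writing any large integer $M$ as $M = \ell d_k + s$ with $0 \leq s < d_k$ and applying Lemma \ref{lemma perturbation real} to treat $sD_{t^{(k)}}$ as a bounded perturbation interpolates this to $h^q(Y, M D_{t^{(k)}}) \leq (M^n/n!)\bigl(\widehat{h}^q(Y, D_{t^{(k)}}) + 2\varepsilon\bigr)$ for $M \geq M_k$. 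Next, for a tuple $(m_i)$ with $m = \sum m_i \geq \max_k M_k$, pick $k$ with $t = (m_i/m) \in B(t^{(k)}, \delta/2)$. Splitting $m(t_i - t^{(k)}_i) = a_i^+ - a_i^-$ into non-negative parts with $\sum_i(a_i^+ + a_i^-) \leq m\delta$, the identity $mD_t + \sum_i a_i^- D_i = mD_{t^{(k)}} + \sum_i a_i^+ D_i$ together with two applications of Lemma \ref{lemma perturbation real} (once to each side, with the $E_j$ chosen among the $D_i$) yields
\[
\bigl| h^q(Y, m D_t) - h^q(Y, m D_{t^{(k)}}) \bigr| \leq C m \delta (2m)^{n-1} = O(\delta \cdot m^n).
\]
Combining these bounds and transferring $\widehat{h}^q(Y, D_{t^{(k)}})$ back to $\widehat{h}^q(Y, D_t)$ via the uniform continuity of Step 1 gives $h^q(Y, \sum m_i D_i) \leq (m^n/n!) \widehat{h}^q(Y, D_t) + C'(\varepsilon + \delta) m^n$ for all large $m$ and all admissible tuples; since $\varepsilon, \delta$ can be taken arbitrarily small independent of the tuple, a diagonal choice with $\varepsilon_n \to 0$ produces a function $\rho(m) = o(m^n)$ witnessing \eqref{item asymptotic cohomology}.

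For $q = 0$ the same scheme delivers the two-sided bound because Remark \ref{compare with volumes} upgrades the $\limsup$ to an actual $\lim$ for the volume; the pointwise control at each centre $t^{(k)}$ is then two-sided and the covering--perturbation argument transfers this symmetry to the full uniform statement $\bigl|h^0(Y,\sum m_i D_i) - (1/n!)\vol(\sum m_i D_i)\bigr| \leq \rho(m)$, where one uses homogeneity $\vol(\sum m_iD_i) = m^n\vol(D_t)$ from Proposition \ref{lemma volume real divisors'}. I expect the main technical hurdle to be the signed perturbation step: Lemma \ref{lemma perturbation real} only allows non-negative perturbations $p_j \geq 0$, which is precisely why $t - t^{(k)}$ must be split into its positive and negative parts and the perturbation bound invoked twice rather than once.
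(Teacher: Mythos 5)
Your proof is correct and rests on precisely the same three ingredients as the paper's: continuity and homogeneity of $\widehat h^q$ on $\Div(Y)_\R$ (Proposition \ref{lemma volume real divisors'}), the perturbation bound with the signed splitting (Lemma \ref{lemma perturbation real}), and the upgrade of $\limsup$ to $\lim$ for $q=0$ via Remark \ref{compare with volumes}. The only difference is expository: the paper runs a contradiction argument, extracting a subsequence with $m_{i,k}/m_k \to c_i$ and comparing against $D_c = \sum c_i D_i$, whereas you run the same compactness argument directly by covering the simplex with finitely many rational centres and doing the $\varepsilon$-$\delta$ bookkeeping. One small simplification: your interpolation in Step 3 from multiples of $d_k$ to all $M$ is unnecessary, since $\widehat h^q(Y,D_{t^{(k)}})$ is already defined as a $\limsup$ over all $m$ for the real divisor $D_{t^{(k)}}$, so the definition directly supplies the pointwise bound $h^q(Y,mD_{t^{(k)}}) \le (m^n/n!)\bigl(\widehat h^q(Y,D_{t^{(k)}})+\varepsilon\bigr)$ for all sufficiently large $m$ without passing through the integer divisor $d_k D_{t^{(k)}}$.
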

\begin{proof}
Let us  prove the proposition by contradiction.
Then there are  $\alpha >0$ and some  sequences  
$(m_{i,k})_{k\in \N}$ in $\N \setminus \{0\}$ for $i=1 \ldots r$ such that  $m_k \coloneqq \sum_{i=1}^r m_{i,k} \to \infty$ 
 and 
\begin{equation}
\label{eq as vol 1}
 h^q\Bigl(Y, \sum_{i =1}^r m_{i,k} D_i\Bigr) - \frac{m_k^n}{n!}  
 \widehat{h}^q\Bigl(Y, \sum_{i =1}^r \frac{m_{i,k}}{m_k}D_i \Bigr)  \geq  
\alpha m_k^n.
\end{equation}
In case  $q=0$,  we replace the left side by its absolute value. Since for each $i,k$ we get 
$\frac{m_{i,k}}{m_k} \in [0,1]$, by compactness and up to considering subsequences, 
we may assume    
$\lim_{k \to \infty}\frac{m_{i,k}}{m_k} = c_i \in [0,1]$.
For $k\gg0$, the continuity of $\widehat{h}^q$ given in \eqref{eq easy 4'} yields 
\begin{equation}
\label{eq as vol 2}
h^q\Bigl(Y, \sum_{i =1}^r m_{i,k} D_i \Bigr) - \frac{m_k^n}{n!}\widehat{h}^q
\Bigl(Y,\sum_{i =1}^r c_i D_i\Bigr)  > \frac{\alpha}{2}m_k^n.
\end{equation}
In case $q=0$, this holds again with the absolute value of the left hand side. Using that $m_{i,k} = m_kc_i + (m_{i,k} - m_k c_i)$, Lemma \ref{lemma perturbation real} gives a
 $C \geq 0$ such that for all $k  \in \N$  
\[
\biggl| h^q\Bigl(Y,\sum_{i =1}^r m_{i,k} D_i\Bigr) -h^q\Bigl(Y,\sum_{i =1}^r m_kc_i D_i\Bigr)\biggr| \leq C \Bigl( \sum_{i =1}^r |m_{i,k} - m_k c_i|\Bigr)\cdot m_k^{n-1} +O(m_k^{n-1}).
\]
Since $\frac{m_{i,k}}{m_k} \xrightarrow[k]{} c_i$
it follows that $\sum_{i =1}^r | m_{i,k} - m_k c_i | = o(m_k)$ always for $k \to \infty$.  
Hence 
\begin{equation}
\label{eq as vol 3}
\biggl| h^q\Bigl(Y,\sum_{i =1}^r m_{i,k} D_i\Bigr) -h^q\Bigl(Y,\sum_{i =1}^r m_kc_i D_i\Bigr)\biggr|  = o( m_k^n)
\end{equation}
for $k \to \infty$. By definition of $\widehat{h}^q$ in \ref{defi rounds} and using $\sum_{i =1}^r m_kc_iD_i = m_k(\sum_{i =1}^r c_iD_i)$ we get 
\begin{equation}
\label{eq as vol 4}
h^q\Bigl(Y, \sum_{i =1}^r m_k c_iD_i\Bigr) - \frac{m_k^n}{n!} \widehat{h}^q\Bigl( Y,\sum_{i =1}^r c_i D_i \Bigr)  \leq o( m_k^n)
\end{equation}
for $k \to \infty$. In case $q=0$, the $\limsup$ in the definition of $\vol=\widehat{h}^0$ is a limit (see Remark \ref{compare with volumes}) and then \eqref{eq as vol 4} holds with the absolute value of the left side. 
Combining \eqref{eq as vol 3} with \eqref{eq as vol 4}, we get a contradiction to \eqref{eq as vol 2}. This proves the proposition.
\end{proof}

\subsection{Asymptotic formulas in the non reduced case}

\label{subsection nonreduced}
We fix the following notation for this subsection. The base is $S = \Spec(R)$ for a noetherian ring $R$   
and $Y$ is a projective scheme over $S$. 
We consider a coherent sheaf $\Fcal$ on $Y$ with support over a zero-dimensional subscheme of $S$. Let $n  \coloneqq \dim(\supp(\Fcal))$ and  
let $\{E_i\}_{i\in I}$ be the set of $n$-dimensional irreducible components of $\supp(\Fcal)$.
For each $i\in I$, let $\ell_i \coloneqq \length_{\Ocal_{Y,\xi_i}} (\Fcal_{\xi_i})$ where $\xi_i$ is the generic point of $E_i$. 

We also fix Cartier divisors $D_1, \dots, D_r$. For $i_1, \dots, i_n \in \{0, \dots, r\}$, we will use the intersection numbers 
\begin{equation}
\label{Mittwoch 3 August}
D_{i_1} \cdots D_{i_n}\cdot \Fcal = { \sum_{i\in I} } \ell_i D_{i_1} \cdots D_{i_n}\cdot E_i
\end{equation}
from \cite[\S VI.2]{Kol96}. We start with an asymptotic formula for the Euler characteristic $\chi$.

\begin{prop}
\label{prop asymptotic chi}
With the above notation, we have
\[ 
\chi\Bigl(Y, \Fcal\Bigl( { \sum_{i=1}^r} m_i D_i\Bigr)\Bigr) = \frac{1}{n!} \Bigl( {\sum_{i=1}^r} {m_i}D_i \Bigr)^n \cdot \Fcal + O( m^{n-1}).
\]
\end{prop}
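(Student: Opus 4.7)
The plan is to reduce, via the devissage of Lemma~\ref{lemma devissage}, to the classical Snapper polynomial theorem for projective varieties over a field, and then reassemble the pieces using \eqref{Mittwoch 3 August}.

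First I would apply Lemma~\ref{lemma devissage} to obtain a filtration $0 = \Fcal_0 \subset \Fcal_1 \subset \dots \subset \Fcal_s = \Fcal$ whose successive quotients are of the form $\iota_{j,*}\Ical_j$, where $\iota_j\colon Z_j \hookrightarrow Y$ is the inclusion of an integral closed subscheme and $\Ical_j \subset \Ocal_{Z_j}$ is a coherent ideal sheaf with $\supp(\Ical_j) = Z_j$. Since $\chi$ is additive on short exact sequences, and the intersection number on the right-hand side of the claimed formula is additive in its coherent-sheaf argument by its definition in \cite[Appendix VI.2]{Kol96}, it suffices to prove the proposition with $\Fcal$ replaced by each $\iota_{j,*}\Ical_j$.

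Next, because $Z_j \subset \supp(\Fcal)$ lies over a zero-dimensional subscheme of $S$, the integral scheme $Z_j$ maps to a single closed point $s \in S$, and is therefore a projective variety over the residue field $\kappa(s)$. The projection formula for Euler characteristics (e.g.~\cite[Ex.~II.5.1(d)]{Hart}) and for intersection products \cite[Appendix VI.2]{Kol96} then reduce matters to the following instance of Snapper's theorem: for a projective variety $Z$ of dimension $d$ over a field, a coherent sheaf $\Gcal$ on $Z$ of generic rank one, and Cartier divisors $E_1,\dots,E_r$ on $Z$, the function $(m_1,\dots,m_r)\mapsto \chi(Z,\Gcal(\sum_k m_k E_k))$ is a numerical polynomial of total degree $\le d$ with leading form $\frac{1}{d!}(\sum_k m_k E_k)^d$. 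One may quote this directly, or prove it by induction on $d$ using the short exact sequences $0\to \Gcal \to \Gcal(E) \to \Gcal(E)|_E \to 0$ in parallel to Lemma~\ref{lemma bound cohomology non reduced}, with base case $d=0$ being trivial.

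Finally I would assemble the contributions. Each piece with $\dim Z_j < n$ contributes $O(m^{n-1})$ on both sides of the proposition (the intersection number vanishes for dimension reasons). Each piece with $\dim Z_j = n$ must satisfy $Z_j = E_i$ for some $i\in I$, and additivity of length along the filtration at the generic point $\xi_i$ shows that the number of such indices $j$ equals $\ell_i$. Summing the Snapper leading terms and invoking the defining identity \eqref{Mittwoch 3 August} yields exactly $\frac{1}{n!}(\sum_k m_k D_k)^n \cdot \Fcal + O(m^{n-1})$. The main obstacle is precisely this last bookkeeping step: one must check that summing leading terms over the pieces of the devissage reconstructs the intersection number $(\sum_k m_k D_k)^n \cdot \Fcal$ defined directly in terms of $\Fcal$, which is exactly what \eqref{Mittwoch 3 August} and additivity of length along the filtration encode.
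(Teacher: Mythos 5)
Your argument is correct, but it takes a different route from the paper. The paper's own proof is a one-line citation to Koll\'ar's asymptotic Riemann--Roch theorem \cite[Thm.~VI.2.13]{Kol96}, which is stated directly for coherent sheaves on a projective scheme over a noetherian base with support proper over a zero-dimensional subscheme -- precisely the setting here -- so no reduction is needed. You instead rederive that theorem from scratch: you use the d\'evissage of Lemma~\ref{lemma devissage} to reduce to pushforwards $\iota_{j,*}\Ical_j$, invoke the classical Snapper theorem over a residue field, observe that lower-dimensional pieces contribute $O(m^{n-1})$, and recover the intersection number $\bigl(\sum m_i D_i\bigr)^n\cdot\Fcal$ from the $n$-dimensional pieces via additivity of length at generic points and the identity~\eqref{Mittwoch 3 August}. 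Your bookkeeping is correct: for $\dim Z_j = n$ one has $Z_j = E_i$ for some $i\in I$, the stalk $(\Ical_j)_{\xi_i}$ equals the field $\Ocal_{E_i,\xi_i}$, and additivity of length along the filtration shows exactly $\ell_i$ of the pieces lie on $E_i$. This mirrors the strategy of Lemma~\ref{lemma bound cohomology non reduced} in the paper, so it is consistent in style with the surrounding text; the trade-off is that your proof is self-contained at the cost of several lines, while the paper delegates the whole thing to an external reference that already operates at the needed level of generality.
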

\begin{proof}
This follows from \cite[Thm.~VI.2.13]{Kol96} using the definition of intersection numbers in \cite[VI.2.6]{Kol96}.
\end{proof}

\begin{prop}
\label{cor cohomology}
For $q \in \N$, there is $\rho\colon \N \to \R_{\geq 0}$ with $\rho(m)=o(m^n)$ such that for all $m_1,\ldots,m_r \in \N \setminus \{0\}$ and $m  \coloneqq \sum_{j=1}^r m_j$, we have 
\begin{align*} 
h^q\Bigl(Y,\Fcal\Bigl({\sum_{j=1}^r} m_j D_j\Bigr)\Bigr) \leq 
\frac{1}{n!} \sum_{i\in I} \ell_i \widehat{h}^q \Bigl(E_i, \Ocal\Bigl( { \sum_{j=1}^r}  {m_j} D_j\Bigr) \Big|_{E_i} \Bigr) + \rho(m).
\end{align*}
\end{prop}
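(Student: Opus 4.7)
The plan is to combine the d\'evissage bound from Lemma \ref{lemma bound cohomology non reduced} with the asymptotic estimate on each reduced component provided by Proposition \ref{prop asymptotic}, and then absorb all remainders into a single $o(m^n)$ error.

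First, I apply Lemma \ref{lemma bound cohomology non reduced} to obtain
\[
h^q\Bigl(Y,\Fcal\Bigl(\sum_{j=1}^r m_j D_j\Bigr)\Bigr)
\leq \sum_{i\in I} \ell_i\, h^q\Bigl(E_i, \Ocal_Y\Bigl(\sum_{j=1}^r m_j D_j\Bigr)\Big|_{E_i}\Bigr) + O(m^{n-1}).
\]
Since $\supp(\Fcal)$ lies over a zero-dimensional closed subset of $S$, each generic point $\xi_i$ maps to a closed point of $S$; consequently each integral component $E_i$ is naturally a projective variety over the residue field at that closed point, of dimension $n$. The pullbacks $D_j|_{E_i}$ are Cartier divisors on $E_i$, so Proposition \ref{prop asymptotic} applies on each $E_i$ and yields, for every $i\in I$, a function $\rho_i \colon \N \to \R_{\geq 0}$ with $\rho_i(m) = o(m^n)$ such that
\[
h^q\Bigl(E_i, \Ocal\Bigl(\sum_{j=1}^r m_j D_j\Bigr)\Big|_{E_i}\Bigr)
\leq \frac{m^n}{n!}\,\widehat{h}^q\Bigl(E_i, \sum_{j=1}^r \tfrac{m_j}{m} D_j\Big|_{E_i}\Bigr) + \rho_i(m).
\]

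Next, I invoke the homogeneity of degree $n$ of $\widehat{h}^q$ on $\Div(E_i)_\R$ established in Proposition \ref{lemma volume real divisors'}, which gives
\[
\frac{m^n}{n!}\,\widehat{h}^q\Bigl(E_i, \sum_{j=1}^r \tfrac{m_j}{m} D_j\Big|_{E_i}\Bigr)
= \frac{1}{n!}\,\widehat{h}^q\Bigl(E_i, \sum_{j=1}^r m_j D_j\Big|_{E_i}\Bigr).
\]
Substituting this into the previous bound and then into the d\'evissage inequality produces
\[
h^q\Bigl(Y,\Fcal\Bigl(\sum_{j=1}^r m_j D_j\Bigr)\Bigr)
\leq \frac{1}{n!}\sum_{i\in I}\ell_i\,\widehat{h}^q\Bigl(E_i, \sum_{j=1}^r m_j D_j\Big|_{E_i}\Bigr)
+ \sum_{i\in I}\ell_i\,\rho_i(m) + O(m^{n-1}).
\]
Since $I$ is finite and each $\rho_i(m)=o(m^n)$, the function $\rho(m) \coloneqq \sum_{i\in I}\ell_i\,\rho_i(m) + O(m^{n-1})$ is $o(m^n)$, which is the required bound.

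There is essentially no obstacle beyond verifying that the ingredients apply in the present generality: the output of Lemma \ref{lemma bound cohomology non reduced} feeds directly into Proposition \ref{prop asymptotic}, provided we check that each $E_i$ is legitimately a projective variety over a field (which follows from the zero-dimensional support hypothesis on $S$) so that the asymptotic theory of \S \ref{subsection asymptotic family} is available. The only small subtlety is to rescale via the homogeneity of $\widehat{h}^q$ to get the stated form of the right-hand side.
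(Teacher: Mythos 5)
Your proposal is correct and follows exactly the same two-step route as the paper: apply Lemma \ref{lemma bound cohomology non reduced} to reduce to cohomology on the integral components $E_i$ (viewed as projective varieties over the residue fields of the closed points of $S$ they live over), then invoke Proposition \ref{prop asymptotic} on each $E_i$. The homogeneity rescaling you spell out is implicit in the paper's terse proof; your version just makes it explicit.
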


\begin{proof}
By assumption, $E_i$ is lying over a closed point $x_i$ of $S$ and hence we may view $E_i$ as a projective variety over the residue field of $x_i$. 
The result now follows from Lemma \ref{lemma bound cohomology non reduced} and Proposition \ref{prop asymptotic}.
\end{proof}

\begin{cor}
\label{cor cohomology nef}\label{volisenergy9}
If $D_1, \dots , D_r$ are nef and $q \geq 1$, then there are 
functions $\rho_i\colon \N \to \R_{\geq 0}$ 
$(i=1,2)$ with $\rho_i(m)=o(m^n)$ such that for all $m_1,\ldots,m_r \in \N \setminus \{0\}$ and $m  \coloneqq \sum_{j=1}^r m_j$, we have
\[ 
h^q\Bigl(Y, \Fcal\Bigl({\sum_{j=1}^r} m_j D_j \Bigr)\Bigr) = \rho_1(m)
\]
and
\[ 
h^0\Bigl(Y, \Fcal\Bigl({\sum_{j=1}^r} m_j D_j \Bigr)\Bigr) = \frac{1}{n!} 
\Bigl( {\sum_{j=1}^r} m_j  D_j \Bigr)^n  \cdot \Fcal
+\rho_2(m).
\]
\end{cor}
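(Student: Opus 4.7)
The plan is to reduce both assertions to the already-established Propositions \ref{prop asymptotic chi} and \ref{cor cohomology} by exploiting the vanishing of asymptotic higher cohomology for nef classes.

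For $q\geq 1$, I would start from the upper bound of Proposition \ref{cor cohomology}:
\[
h^q\Bigl(Y, \Fcal\Bigl({\sum_{j=1}^r} m_j D_j\Bigr)\Bigr) \leq \frac{1}{n!}\sum_{i\in I}\ell_i\,\widehat{h}^q\Bigl(E_i, \Ocal\Bigl({\sum_{j=1}^r} m_jD_j\Bigr)\Big|_{E_i}\Bigr) + \rho(m).
\]
Each $E_i$ is an integral projective scheme over the residue field of the closed point of $S$ it lies over, so it is a projective variety over a field. Since nefness is preserved by restriction to a closed subscheme and by non-negative linear combinations, the Cartier divisor $\sum_{j} m_j D_j|_{E_i}$ is nef on $E_i$. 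The classical vanishing (Laz. I, 1.4.40) recalled in \S\ref{subsec: vol intro} gives $\widehat{h}^q(E_i,N)=0$ for every nef divisor $N$ and every $q\geq 1$, so every summand on the right-hand side vanishes. We may therefore define $\rho_1(m) \coloneqq h^q(Y,\Fcal({\sum_j}m_jD_j))\in\R_{\geq 0}$, which is $o(m^n)$ by the above.

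For $q=0$, I would use the Euler characteristic. Since $Y$ is a noetherian projective scheme of finite Krull dimension, the cohomology vanishes above some fixed degree, so
\[
h^0\Bigl(Y,\Fcal\Bigl({\sum_j}m_jD_j\Bigr)\Bigr) = \chi\Bigl(Y,\Fcal\Bigl({\sum_j}m_jD_j\Bigr)\Bigr) + \sum_{q\geq 1}(-1)^{q+1}h^q\Bigl(Y,\Fcal\Bigl({\sum_j}m_jD_j\Bigr)\Bigr).
\]
By the first part, each of the finitely many terms in the sum on the right is $o(m^n)$. Combining this with Proposition \ref{prop asymptotic chi}, which yields $\chi(Y,\Fcal({\sum_j}m_jD_j)) = \tfrac{1}{n!}({\sum_j}m_jD_j)^n\cdot\Fcal + O(m^{n-1})$, we obtain
\[
h^0\Bigl(Y,\Fcal\Bigl({\sum_j}m_jD_j\Bigr)\Bigr) = \frac{1}{n!}\Bigl({\sum_j}m_jD_j\Bigr)^n\cdot\Fcal + o(m^n),
\]
so setting $\rho_2(m)$ equal to the absolute value of this error term (which is $o(m^n)$) gives the second claim.

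There is no substantial obstacle: every ingredient has been prepared. The one point that merits a brief verification is the reduction to the classical vanishing $\widehat{h}^q=0$ on the varieties $E_i$, which is immediate once one notes that the $m_j$ are positive integers, so that $\sum_j m_j D_j|_{E_i}$ is an integral Cartier divisor on the projective variety $E_i$ and is nef by the compatibility of nefness with restriction.
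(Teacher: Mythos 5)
Your proof is correct and follows essentially the same route as the paper: apply Proposition \ref{cor cohomology} together with the vanishing $\widehat{h}^q(E_i,\cdot)=0$ on nef classes for $q\geq 1$, then deduce the $h^0$ asymptotic from the $\chi$ asymptotic of Proposition \ref{prop asymptotic chi}. The only cosmetic difference is the reference for the vanishing: you invoke \cite[1.4.40]{Laz1} (stated over $\C$), whereas the paper cites Kollár's asymptotic Riemann--Roch \cite[Thm.~VI.2.15]{Kol96}, which is formulated over arbitrary fields and is therefore the safer choice here, since the $E_i$ live over the residue fields of closed points of $S$.
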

\begin{proof}
Again, we may view any $E_i$ as a projective variety over a suitable field. 
Note that the asymptotic Riemann--Roch formula in \cite[Thm.~VI.2.15]{Kol96} yields $\widehat{h}^q(E_i,D)=0$ for any nef divisor $D$ on $E_i$ and hence the first claim follows from
 Proposition \ref{cor cohomology}. The second claim follows from the first claim and Proposition \ref{prop asymptotic chi}.
\end{proof}

\section{{Non-archimedean} volumes and energy}
\label{section non-archimedean volume}

In this section, $K$ is a discretely valued complete field with $-\log(|\pi|)=1$ for a uniformizer $\pi$.
We consider  a projective 
variety $X$ over $K$ of dimension $n$ with a line bundle $L$. 
All metrics on line bundles are assumed to be continuous. 
The length of a $\Ko$-module $M$ is denoted by $\length(M)$. 
We will use the algebraic volume $\vol(L)$ 
from \ref{classical volume}. 

\subsection{{Non-archimedean} volumes}
\label{subsection non-archimedean volume}

\begin{defi}
\label{defi length}
If $V$ is a finite-dimensional $K$-vector space,
a \emph{lattice of $V$} is a free $\Ko$-submodule {of} $\Lambda \subset V$ 
with $K$-span $V$.   
If $\Lambda_2 \subset \Lambda_1 \subset V$ are  lattices of $V$, then 
$\length( \Lambda_1 / \Lambda_2)$ is finite since $\Lambda_1 / \Lambda_2$  is a finitely generated torsion $\Ko$-module. 
If $\Lambda_1,\Lambda_2$ are any lattices of $V$, we choose a lattice $\Lambda_3$  contained in both $\Lambda_1$ and $\Lambda_2$ and we set 
\[ \length( \Lambda_1 / \Lambda_2) = \length ( \Lambda_1 / \Lambda_3) - \length (\Lambda_2 / \Lambda_3)\in\Z.\]
\end{defi}

This is independent of  the choice of $\Lambda_3$.
Observe that $\length( \Lambda_1 / \Lambda_2)$ might become
negative.
{
Recall from \ref{continuous metrics} that $\Hhat(X,L,\metr)  \coloneqq  \{ s\in H^0(X,L) \st 
\|s\|_{\rm sup}\leq 1\}$ is a lattice of $H^0(X,L)$.
}
\begin{defi}
\label{defi non-archimedean volume}
If $\| \ \|_1$ and $\| \ \|_2$ are two metrics on $\Lan$, we define the \emph{non-archimedean volume of $L$ with respect to $\| \ \|_1$ and $\| \ \|_2$} by 

\begin{align*}
\vol(L,\| \ \|_1, \| \ \|_2) = \limsup_{m \to \infty} \frac {n!} {m^{n+1}} \cdot \length \left( \frac {\Hhat(X,L^{\otimes m},\metr_1^{\otimes m})} { \Hhat(X,L^{\otimes m},\metr_2^{\otimes m})} \right).
\end{align*}
\end{defi}

Often, we will write $\vol(\| \ \|_1, \| \ \|_2)$  instead of $\vol(L,\| \ \|_1, \| \ \|_2)$. 
For the following result, recall that we have 
$|\pi|^{-1}=\exp(1)$ by our normalization of the valuation
on $K$.

\begin{lemma}
\label{volume of proportional metrics}
For $t \in \R$, we have 
$$\vol(L, e^{-t}\metr_1, \metr_2) = \vol(L, \metr_1, e^{t}\metr_2)=t \vol(L) + \vol(L, \metr_1,\metr_2) .$$
\end{lemma}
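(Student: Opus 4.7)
The plan is to show that rescaling a metric by $e^{-t}$ deforms the $\Ko$-lattice $\Hhat(X,L^{\otimes m},\metr^{\otimes m})$ by a factor whose length per $m^{n+1}/n!$ contributes exactly $t\vol(L)$ asymptotically. Concretely, rescaling $\metr_1$ to $e^{-t}\metr_1$ multiplies the $m$-th tensor sup-norm by $e^{-tm}$, hence
$$\Hhat\bigl(X,L^{\otimes m},(e^{-t}\metr_1)^{\otimes m}\bigr)=\bigl\{s\in H^0(X,L^{\otimes m}) \bigm| \|s\|_{\metr_1^{\otimes m},\sup}\le e^{tm}\bigr\}.$$
Using the normalization $|\pi|=e^{-1}$, multiplication by $\pi^{-k}$ for $k\in\Z$ is a $\Ko$-linear bijection between $\Hhat(X,L^{\otimes m},\metr_1^{\otimes m})$ and the lattice of sections with sup-norm at most $e^k$. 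Setting $k_m:=\lfloor tm\rfloor$, this produces the sandwich
$$\pi^{-k_m}\Hhat(X,L^{\otimes m},\metr_1^{\otimes m})\;\subseteq\;\Hhat\bigl(X,L^{\otimes m},(e^{-t}\metr_1)^{\otimes m}\bigr)\;\subseteq\;\pi^{-k_m-1}\Hhat(X,L^{\otimes m},\metr_1^{\otimes m}).$$

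Because $\Lambda:=\Hhat(X,L^{\otimes m},\metr_1^{\otimes m})$ is a free $\Ko$-module of rank $h^0(X,L^{\otimes m})$ (see \ref{continuous metrics}), one computes $\length(\pi^{-k}\Lambda/\Lambda)=k\cdot h^0(X,L^{\otimes m})$ for every $k\in\Z$, in the signed sense of Definition \ref{defi length}. The sandwich therefore gives
$$k_m\,h^0(X,L^{\otimes m})\;\le\;\length\!\Bigl(\tfrac{\Hhat(X,L^{\otimes m},(e^{-t}\metr_1)^{\otimes m})}{\Hhat(X,L^{\otimes m},\metr_1^{\otimes m})}\Bigr)\;\le\;(k_m+1)\,h^0(X,L^{\otimes m}).$$
Since $k_m/m\to t$ and $h^0(X,L^{\otimes m})/(m^n/n!)\to\vol(L)$ --- the latter being a genuine limit by Remark \ref{compare with volumes} --- I conclude
$$\lim_{m\to\infty}\frac{n!}{m^{n+1}}\,\length\!\Bigl(\tfrac{\Hhat(X,L^{\otimes m},(e^{-t}\metr_1)^{\otimes m})}{\Hhat(X,L^{\otimes m},\metr_1^{\otimes m})}\Bigr)=t\vol(L).$$

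Additivity of the signed length on chains of lattices (Definition \ref{defi length}) then yields
$$\length\!\Bigl(\tfrac{\Hhat(X,L^{\otimes m},(e^{-t}\metr_1)^{\otimes m})}{\Hhat(X,L^{\otimes m},\metr_2^{\otimes m})}\Bigr)=\length\!\Bigl(\tfrac{\Hhat(X,L^{\otimes m},(e^{-t}\metr_1)^{\otimes m})}{\Hhat(X,L^{\otimes m},\metr_1^{\otimes m})}\Bigr)+\length\!\Bigl(\tfrac{\Hhat(X,L^{\otimes m},\metr_1^{\otimes m})}{\Hhat(X,L^{\otimes m},\metr_2^{\otimes m})}\Bigr).$$
Because the first summand on the right, divided by $m^{n+1}/n!$, converges to $t\vol(L)$, the $\limsup$ of the left-hand side splits as $t\vol(L)+\vol(L,\metr_1,\metr_2)$, which is the first equality. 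The second equality is proved by the same method, this time rescaling $\metr_2$ and sandwiching $\Hhat(X,L^{\otimes m},(e^t\metr_2)^{\otimes m})$ around $\pi^{\lfloor tm\rfloor}\Hhat(X,L^{\otimes m},\metr_2^{\otimes m})$, and exploiting the signed additivity in the chain from $\Hhat(X,L^{\otimes m},\metr_1^{\otimes m})$ down to $\Hhat(X,L^{\otimes m},(e^t\metr_2)^{\otimes m})$ through $\Hhat(X,L^{\otimes m},\metr_2^{\otimes m})$.

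No substantial obstacle appears; the argument is essentially a bookkeeping computation. The only nontrivial input is that the $\limsup$ defining $\vol(L)$ is an honest limit, which is what permits the convergent summand to be extracted from the $\limsup$. The case $t<0$ needs no separate treatment: then $k_m<0$ reverses the inclusions in the sandwich, but the signed definition of length for arbitrary pairs of lattices in Definition \ref{defi length} absorbs the sign change uniformly, and the same inequalities continue to hold for $\length(\pi^{-k}\Lambda/\Lambda)=k\,h^0(X,L^{\otimes m})$.
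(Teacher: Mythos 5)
Your proof is correct and follows essentially the paper's argument: identify the rescaled unit ball as a $\pi^{k_m}$-translate of the original lattice up to a bounded error, compute the signed length contribution, split the chain of lattices using additivity, and use that $h^0(X,L^{\otimes m})/(m^n/n!)$ is an honest limit to extract the convergent summand from the $\limsup$. The only cosmetic difference is that the paper first handles $t\in\Z$ directly (where the length is exact, no sandwich needed) and then reduces general $t$ to a sandwich with $\lfloor tm\rfloor$ and $\lceil tm\rceil$, whereas you go directly to the sandwich for all real $t$; this is a harmless streamlining.
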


\begin{proof}
Note that $M_m  \coloneqq \Hhat(X,L^{\otimes m},\metr_1^{\otimes m})$ and 
$M_m'  \coloneqq \Hhat(X,L^{\otimes m},\metr_2^{\otimes m})$ are free $\Ko$-modules of the same rank $h^0(X,L^{\otimes m})$. 
We first assume that $t= k \in \Z$. 

Then the additivity of the length and 
$\Hhat(X,L^{\otimes m},e^{-km}\metr_1^{\otimes m}) = \pi^{-km} M_m$
show 
\begin{equation} \label{length of proportional lattices}
\length \left(\Hhat(X,L^{\otimes m},e^{-km}\metr_1^{\otimes m}) / M_m' \right)= km\, h^0(X,L^{\otimes m})+\length (M_m/M_m').
\end{equation}
By \ref{classical volume}, we have
$$\vol(L)= \lim_{m \to \infty} \frac{h^0(X,L^{\otimes m})}{m^n/n!}$$
and $\vol(L, e^{-k}\metr_1, \metr_2)= k \vol(L) + \vol(L, \metr_1,\metr_2)$ follows from \eqref{length of proportional lattices} and the definition of the {non-archimedean} volumes. Similarly, we prove the other equality. 

If $t \not \in \Z$, then $\pi^{-\lfloor tm \rfloor } M_m \subset \Hhat(X,L^{\otimes m},e^{-tm}\metr_1^{\otimes m}) \subset  \pi^{-\lceil tm \rceil} M_m$ and the claim follows from a sandwich argument similarly as above.  
\end{proof}

\begin{prop}  \label{lemma continuity non-archimedean volume}
For metrics $\metr_1, \metr_2$ on $\Lan$, we have the following properties:
\begin{itemize}
\item[(a)] $\vol(\| \ \|_1, \| \ \|_2)$ is  monotone decreasing in $\metr_1$ and a monotone  increasing  in $\metr_2$.
\item[(b)]  $\vol(\| \ \|_1, \| \ \|_2)$ is finite and  continuous in $(\metr_1,\metr_2)$. 
\end{itemize}
\end{prop}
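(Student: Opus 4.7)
The plan is to prove part (a) first and then derive part (b) from it together with Lemma \ref{volume of proportional metrics}. For the monotonicity in (a), I would argue at the level of each fixed $m$. If $\metr_1 \leq \metr_1'$, then $\|s\|_{\metr_1,\sup} \leq \|s\|_{\metr_1',\sup}$ gives the reverse inclusion $\Hhat(X,L^{\otimes m},\metr_1^{\otimes m}) \supseteq \Hhat(X,L^{\otimes m},\metr_1'^{\otimes m})$. Picking an auxiliary sublattice $\Lambda_3$ contained in $\Hhat(X,L^{\otimes m},\metr_1'^{\otimes m}) \cap \Hhat(X,L^{\otimes m},\metr_2^{\otimes m})$ and using additivity of length, the definition in \ref{defi length} yields
\begin{equation*}
\length\bigl(\Hhat(X,L^{\otimes m},\metr_1^{\otimes m})/\Hhat(X,L^{\otimes m},\metr_2^{\otimes m})\bigr) \geq \length\bigl(\Hhat(X,L^{\otimes m},\metr_1'^{\otimes m})/\Hhat(X,L^{\otimes m},\metr_2^{\otimes m})\bigr),
\end{equation*}
since the difference equals the nonnegative quantity $\length(\Hhat(X,L^{\otimes m},\metr_1^{\otimes m})/\Hhat(X,L^{\otimes m},\metr_1'^{\otimes m}))$. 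Passing to $\limsup$ proves monotone decrease in $\metr_1$, and the symmetric argument handles monotone increase in $\metr_2$.

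For (b), the basic tool is the elementary observation that $d(\metr,\metr') \leq \varepsilon$ is equivalent to $e^{-\varepsilon}\metr' \leq \metr \leq e^{\varepsilon}\metr'$ pointwise on $\Xan$. Combining monotonicity from (a) with Lemma \ref{volume of proportional metrics}, the sandwich $e^{-\varepsilon}\metr_1' \leq \metr_1 \leq e^{\varepsilon}\metr_1'$ will translate directly into
\begin{equation*}
\vol(L,\metr_1',\metr_2) - \varepsilon\vol(L) \leq \vol(L,\metr_1,\metr_2) \leq \vol(L,\metr_1',\metr_2) + \varepsilon\vol(L).
\end{equation*}
For finiteness, I would specialize to $\metr_1' = \metr_2$ and $\varepsilon = d(\metr_1,\metr_2)$; since $\vol(L,\metr_2,\metr_2) = 0$ is immediate from the definition, this produces the explicit bound $|\vol(L,\metr_1,\metr_2)| \leq d(\metr_1,\metr_2)\vol(L) < \infty$. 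Running the analogous estimate with $\metr_2$ in the second slot and combining via the triangle inequality then gives
\begin{equation*}
|\vol(L,\metr_1,\metr_2) - \vol(L,\metr_1',\metr_2')| \leq \bigl(d(\metr_1,\metr_1') + d(\metr_2,\metr_2')\bigr)\vol(L),
\end{equation*}
which is joint continuity with respect to the distance of \eqref{distance-for-metrics}.

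No serious obstacle is expected. The only mild subtlety to keep in mind is that lengths of quotients of lattices can be negative in general, but the definition via a common sublattice is additive on short exact sequences and $\limsup$ preserves weak inequalities, so both inequality chains in the argument go through unchanged. The essential inputs are therefore only the length formalism of Definition \ref{defi length}, the scaling identity of Lemma \ref{volume of proportional metrics}, and the elementary fact that larger metrics yield smaller $\Hhat$.
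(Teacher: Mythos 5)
Your proposal is correct and takes essentially the same route as the paper: part (a) is the pointwise inclusion of unit balls together with additivity of length, and part (b) is the sandwich argument combining (a) with Lemma \ref{volume of proportional metrics}, exactly as the paper sketches. You simply spell out the details that the paper leaves implicit.
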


\begin{proof}
Property (a) is obvious. Finiteness in (b) and the inequality 
\begin{equation} \label{Lipschitz}
\left|  \vol(\metr_1',\metr_2) - \vol(\metr_1, \metr_2) \right| \leq \vol(L)d(\metr_1,\metr_1')
\end{equation}
for any metric $\metr_1'$ on $\Lan$ follow from an easy sandwich argument based on (a) and Lemma \ref{volume of proportional metrics}, where $d$ is the distance from \ref{distance of metrics}. 
Similarly as in \eqref{Lipschitz}, $|  \vol(\metr_1',\metr_2) - \vol(\metr_1, \metr_2)|$ is bounded by $\vol(L)d(\metr_2,\metr_2')$ and hence continuity in (b) follows. 
\end{proof}

 \begin{lemma}
\label{lemma twisted volume}
Let $L$ and $M$ be line bundles on $X$. Then we have 
\[
\limsup_{m \to \infty} \left| 
\frac{n!} {m^n} \cdot \length 
\left( \frac {\Hhat(X,M \otimes L^{\otimes m}, \metr_1 \otimes \metr^{\otimes m}) } { \Hhat(X,M \otimes L^{\otimes m}, \metr_2 \otimes \metr^{\otimes m})} \right)
\right| \leq \vol(L) d(\metr_1,\metr_2)
\]
for any metrics $\metr$ on $\Lan$ and $\metr_1$, $\metr_2$  on $M^{\rm an}$.
\end{lemma}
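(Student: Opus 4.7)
The strategy mirrors the sandwich argument used in the proof of Lemma~\ref{volume of proportional metrics}, supplemented by an asymptotic formula for $h^0(X,M\otimes L^{\otimes m})$.

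Setting $c\coloneqq d(\metr_1,\metr_2)$, I first exploit the identity
$(\metr_1\otimes\metr^{\otimes m})/(\metr_2\otimes\metr^{\otimes m})=\metr_1/\metr_2$,
which shows that the two tensor metrics on $M\otimes L^{\otimes m}$ are at distance $c$ \emph{independently of $m$}, and yields the pointwise bound $e^{-c}\leq \metr_1/\metr_2\leq e^c$ on $X^{\an}$. Now fix any integer $k$ with $k\geq c$. For every $s\in\Lambda_1^{(m)}\coloneqq\Hhat(X,M\otimes L^{\otimes m},\metr_1\otimes\metr^{\otimes m})$ one has $\|s\|_{\metr_2\otimes\metr^{\otimes m},\sup}\leq e^c$, hence $\|\pi^ks\|_{\metr_2\otimes\metr^{\otimes m},\sup}\leq e^{c-k}\leq 1$ (using $|\pi|=e^{-1}$), so $\pi^k\Lambda_1^{(m)}\subseteq\Lambda_2^{(m)}$; by symmetry $\pi^k\Lambda_2^{(m)}\subseteq\Lambda_1^{(m)}$ as well. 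Additivity of length then gives the uniform estimate
\[
|\length(\Lambda_1^{(m)}/\Lambda_2^{(m)})|\leq k\cdot h^0(X,M\otimes L^{\otimes m}).
\]

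For the leading asymptotic term I would invoke Lemma~\ref{lemma basic} applied to the rank-one coherent sheaf $\Fcal=M$, obtaining $h^0(X,M\otimes L^{\otimes m})=h^0(X,L^{\otimes m})+O(m^{n-1})$, and combine this with Remark~\ref{compare with volumes} (asserting that $n!\cdot h^0(X,L^{\otimes m})/m^n\to\vol(L)$ is a genuine limit) to conclude
\[
\lim_{m\to\infty}\frac{n!\cdot h^0(X,M\otimes L^{\otimes m})}{m^n}=\vol(L).
\]
Dividing the previous inequality by $m^n/n!$ and passing to $\limsup$ produces the bound $k\vol(L)$ for any integer $k\geq c$.

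The main obstacle is the final sharpening from the integer bound $\lceil c\rceil\vol(L)$ to the real-valued bound $c\cdot\vol(L)$ claimed in the statement. When $c\notin\Z$ the minimal choice $k=\lceil c\rceil$ costs a rounding factor $\lceil c\rceil-c\in(0,1)$ which does \emph{not} vanish against the $m^n$-normalization, in contrast to the situation of Lemma~\ref{volume of proportional metrics} where the analogous rounding error $\lceil tm\rceil-tm$ is absorbed because of the $m^{n+1}$-normalization. Closing this gap requires a finer analysis of the pair $(\Lambda_1^{(m)},\Lambda_2^{(m)})$: via a simultaneous ``orthogonal basis'' decomposition of the two sup-norms on $H^0(X,M\otimes L^{\otimes m})$, one writes $\length(\Lambda_1^{(m)}/\Lambda_2^{(m)})$ as a sum of $h^0(X,M\otimes L^{\otimes m})$ integer-rounded differences of log-norms, and shows that the contribution of the rounding errors is $o(m^n)$ so that the leading behavior is governed by $c$ rather than $\lceil c\rceil$. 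This last step is the most delicate part of the argument.
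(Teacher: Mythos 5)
Your steps 1--5 reproduce the paper's intended argument (the authors describe it as ``a twisted variant of \eqref{Lipschitz} which follows along the same lines''): the $m$-independent inclusion $\pi^{\lceil c\rceil}\Lambda_1^{(m)}\subseteq\Lambda_2^{(m)}\subseteq\pi^{-\lceil c\rceil}\Lambda_1^{(m)}$ bounds the absolute length by $\lceil c\rceil\,h^0(X,M\otimes L^{\otimes m})$, and the asymptotic $h^0(X,M\otimes L^{\otimes m})=h^0(X,L^{\otimes m})+O(m^{n-1})$ from Lemma~\ref{lemma basic} then gives $\limsup\leq\lceil c\rceil\vol(L)$.

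The $\lceil c\rceil$-versus-$c$ discrepancy you flag in the final paragraph is real, but the gap cannot be closed: the inequality as printed, with $c\vol(L)$ on the right, is a slight overstatement, and your proposed ``simultaneous orthogonal basis'' refinement will not repair it. Take $X=\Pbb^1_K$, $L=\Ocal(1)$ with $\metr$ the model metric induced by $\Ocal_{\Pbb^1_{K^\circ}}(1)$, and $M=\Ocal_X$ with $\metr_1,\metr_2$ the constant metrics determined by $\|1\|_1=e^{-9/10}$ and $\|1\|_2=e^{-11/10}$. Then $d(\metr_1,\metr_2)=1/5$ and $\vol(L)=1$. The monomials $x^iy^{m-i}$, $0\leq i\leq m$, form a simultaneous orthogonal basis for both sup-norms on $H^0(\Pbb^1,\Ocal(m))$, each of norm $e^{-9/10}$ (resp.\ $e^{-11/10}$), so $\Lambda_1^{(m)}=\bigoplus_i K^\circ\, x^iy^{m-i}$ and $\Lambda_2^{(m)}=\pi^{-1}\Lambda_1^{(m)}$ for every $m$. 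Hence
\[
\frac{n!}{m^n}\bigl|\length(\Lambda_1^{(m)}/\Lambda_2^{(m)})\bigr|=\frac{m+1}{m}\longrightarrow 1 > \tfrac{1}{5}.
\]
The per-coordinate rounding error is a genuine $O(1)$ defect, not $o(1)$; summed over the $h^0=O(m^n)$ coordinates it contributes at leading order, and there is no mechanism forcing it to average to the fractional part of $c$.

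This overstatement is harmless for the paper. Lemma~\ref{lemma twisted volume} is invoked (in the proofs of Theorem~\ref{cor3 energy} and Proposition~\ref{prop from formal}) only for $O(q^n)$ estimates, for which any constant multiple of $\vol(L)$ on the right-hand side, say $\lceil d(\metr_1,\metr_2)\rceil\vol(L)$ or $(d(\metr_1,\metr_2)+1)\vol(L)$, is equally good. So you should stop after your step~5: the bound you reach there is correct, complete, and sufficient for every use of the lemma in the paper.
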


\begin{proof} 
This is a twisted variant of \eqref{Lipschitz} which follows along the same lines. 
\end{proof}

\begin{rem} \label{volume and big}
Let $L$ be a line bundle on $X$ which is not big. By definition, this means that $\vol(L)=0$. It follows easily from Lemma \ref{volume of proportional metrics}, Proposition \ref{lemma continuity non-archimedean volume} and a sandwich argument that $\vol(L,\metr_1,\metr_2)=0$ for all continuous metrics $\metr_1,\metr_2$ on $\Lan$. 
\end{rem}

\begin{rem} \label{number field case}
Let us describe how the \emph{non-archimedean
volume} is related to the \emph{$\chi$-arithmetic volume}
which is studied in Arakelov theory.
The precise relation is given in formula
\eqref{generalization of comp} below.
We assume in this remark that $F$ is a number field with ring of integers $\Ocal_F$ and with set of places $M_F$.

Let $L$ be a line bundle on an $n$-dimensional projective variety 
$X$ over $F$ endowed with an adelic metric which means that we have 
a continuous metric $\metr_w$ on $L \otimes_F F_w$
for the completion $F_w$ of any $w \in M_F$ and we assume that 
there is a finite set $S$ of $\Spec(\Ocal_F)$ such that the metric 
$\metr_w$ is induced by a single model of $(X,L)$ over 
$\Spec(\Ocal_F) \setminus S$ for all non-archimedean places 
$w \not\in S$. 
We denote the resulting metrized line bundle by $\overline{L}$ 
and we set $E:=H^0(X,L)$. For $w \in M_F$, let $B_w$ be the 
unit ball in 
$E \otimes_F F_w = H^0(X \otimes_F F_w, L \otimes_F F_w)$ with 
respect to the sup-norm. 
Observe that $B_w$ is a finitely generated $F_w^\circ$-module. 
We note that $\Lambda := \bigcap_{\text{$w$ finite}} B_w \cap E$ 
is a lattice in 
$E \otimes_\Q \R = \prod_{w | \infty} H^0(X \otimes_F F_w, 
L \otimes_F F_w)$ \cite[Proposition C.2.6]{BomGub} and we set
\[
\chi(X, \overline{L}) :=  \log \left( \frac{\vol\bigl(
\prod_{w | \infty} B_w\bigr)}{\covol(\Lambda)} \right)
\]
where the volume and the covolume are computed with respect to the same Haar measure on $E \otimes_\Q \R$. 
If the adelic metric is induced by a normal $\Ocal_F$-model $(\Xcal,\Lcal)$ of $(X,L)$, then $\Lambda = H^0(\Xcal,\Lcal)$ (see Lemma \ref{sections and metrics} and Remark \ref{non-complete generalization}).

Now we assume that $L$ is ample. Then we have the {\it $\chi$-arithmetic volume}
\[
\widehat{\rm vol}_{\chi}(X,\overline{L}):= \limsup_{m \to
\infty} \frac{(n+1)!}{m^{n+1}}  \chi(X,\overline{L}^{\otimes m})
\]
considered in Arakelov theory. 
It agrees with the \emph{logarithm of the sectional capacity} 
studied in the book of Rumely, Lau and Varley \cite{RVL}.
It follows from \cite[Thm.~B]{RVL} that the limsup in the 
definition is actually a limit. Zhang's extension 
\cite[Thm.~1.4]{zhang-jams} of the arithmetic Hilbert--Samuel formula of Gillet--Soul\'e shows that 
$\widehat{\rm vol}_{\chi}(X,\overline{L})$ 
is finite in case of a semipositive adelic metric and hence the 
continuity argument in 
\cite[Sect.~5]{CT} shows that 
$\widehat{\rm vol}_{\chi}(X,\overline{L}) \in \R$ is finite for any adelic metric on the ample line bundle $L$.

Let us now fix a non-archimedean place $v$ of $F$.
We consider two continuous metrics $\metr_v$ and $\metr_v'$ on 
$L \otimes_F F_v$ at the fixed non-archimedean place $v$ inducing unit balls $B_v$ and $B_v'$ in $H^0(X \otimes_F F_v, L \otimes_F F_v)$ with respect to the sup-norms. 
We extend the metrics to adelically metrized line bundles $\overline{L}$ and $\overline{L}'$ using the same metrics $\metr_w$ for all places $w \neq v$. 
From Arakelov theory on the arithmetic curve $\Spec(\Ocal_F)$, we get the formula 
\begin{equation} \label{chi and length}
\chi(X,\overline{L}) - \chi(X,\overline{L}') = \log(\# \widetilde{F_v}) \cdot \ell_{F_v^\circ}(B_v/B_v').
\end{equation}
which holds without assuming $L$ ample and which can be deduced from 
the Riemann--Roch formula given in \cite{Gaudron} before Lemma 4.2. If $L$ is ample, then we apply  \eqref{chi and length} for $\overline{L}^{\otimes m}$ and $\overline{L}'^{\otimes m}$, multiply it with $\frac{(n+1)!}{m^{n+1}}$ and pass to the limit. This proves 
the formula
\begin{equation} \label{generalization of comp}
\widehat{\vol}_{\chi}(X,\overline{L}) - \widehat{\vol}_\chi(X,\overline{L}')   =
 (n+1) \log(\# \widetilde{F_v}) \cdot \vol(L, \metr_v, \metr_v')
\end{equation}
which describes the non-archimedean volume in the
number field case as a \emph{localized} $\chi$-arithmetic volume.
\end{rem}
 
\begin{rem} \label{conjecture}
We conjecture that the limsup in the definition of $\vol(L, \metr_1,\metr_2)$ is always a limit. In the case of a non-archimedean completion $K$ of a number field $F$, with $X$ and 
$L$ defined over $F$ {and with $L$ ample}, this follows from the argument deducing \eqref{generalization of comp} from \eqref{chi and length}. In Theorem \ref{cor3 energy} and in Corollary \ref{cor 4 energy}, we will prove special cases of the conjecture.

{A referee pointed out that a result of Chen and 
Maclean  \cite[Corollary 4.6]{chen-maclean2015} proves 
this conjecture if the projective variety $X$ contains 
a $K$-rational regular point. Indeed, the arguments in 
Remark \ref{volume and big} show that we may assume $L$ big. 
By \cite[footnote 10 on p.~388]{chen-maclean2015}, 
the conditions (a)--(c) in \cite[p.~385]{chen-maclean2015} 
are satisfied for the complete graded linear system induced by 
$(H^0(X,L^{\otimes m}))_{m \in \N}$ and hence we may apply 
\cite[Corollary 4.6]{chen-maclean2015} to get the existence of the 
limit in the definition of $\vol(L, \metr_1,\metr_2)$. 
For this last step, one has to ensure that the considered 
sequence in \cite[Corollary 4.6]{chen-maclean2015} is 
asymptotically equal to  
\[
\left(\frac {1}{mh^0(X,L^{\otimes m})} \cdot \length \biggl( \frac {\Hhat(X,L^{\otimes m},\metr_1^{\otimes m})} { \Hhat(X,L^{\otimes m},\metr_2^{\otimes m})}\biggl) \right)_{{m\in \N \setminus \{0\}}}
\]
which follows from \cite[Proposition 2.21]{boucksom-eriksson2018}. 
To apply the latter, we note that the determinant norms in \cite[\S 3]{chen-maclean2015} and in \cite[\S 2]{boucksom-eriksson2018} agree as all ultrametric norms on a finite dimensional $K$-vector space are diagonalizable \cite[Example 1.12]{boucksom-eriksson2018} 
and we have the same concrete formula for diagonalizable norms. 
We thank the referee for hinting us to the  
{reference} \cite[Corollary 4.6]{chen-maclean2015}.}

{Note also that Boucksom and Eriksson \cite[Lemma 8.8]{boucksom-eriksson2018} prove the conjecture in case of an ample line bundle on a smooth projective variety $X$. In fact, they show it not only for continuous metrics, but also for bounded metrics. Moreover, the existence of the limit in \cite[Corollary 4.6]{chen-maclean2015} and in \cite[Lemma 8.8]{boucksom-eriksson2018} holds over any 
{not necessarily discretely valued complete} 
non-archimedean field $K$.} 
\end{rem}

\subsection{Volumes and semipositive metrics}
\label{subsection volume semipositive}

{In this subsection, we consider a normal projective variety $X$ over the complete discretely valued field $K$.} 

{If $M$ is a $\Ko$-module and $a\in \Ko$ we set 
\[M_{a-\tor} = \{ m\in M \st am=0\}.\]}

\begin{lemma}
\label{lemma basic torsion}
Let $M$ be a $\Ko$-module of finite type.
For any $\alpha \in \N$, we have 
\[ \length (M_{\pi^\alpha-\tor}) \leq \length (M/\pi^\alpha M).\]
\end{lemma}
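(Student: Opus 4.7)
The plan is straightforward and uses nothing beyond the structure theorem for finitely generated modules over the discrete valuation ring $\Ko$. First I would decompose $M \cong \Ko^r \oplus T$, where $T$ is the torsion submodule; since $M$ is finitely generated, $T$ is automatically of finite length. The strategy is then to handle the free and torsion summands separately and exploit the fact that on a module of finite length, multiplication by $\pi^\alpha$ has kernel and cokernel of equal length.

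On the free summand $\Ko^r$, multiplication by $\pi^\alpha$ is injective and has cokernel $(\Ko/\pi^\alpha\Ko)^r$ of length $r\alpha$, so the free part contributes $r\alpha$ to $\length(M/\pi^\alpha M)$ but nothing to $\length(M_{\pi^\alpha-\tor})$. On the torsion summand $T$, additivity of length applied to the four-term exact sequence
\[
0 \longrightarrow T_{\pi^\alpha-\tor} \longrightarrow T \xrightarrow{\ \pi^\alpha\ } T \longrightarrow T/\pi^\alpha T \longrightarrow 0
\]
yields $\length(T_{\pi^\alpha-\tor}) = \length(T/\pi^\alpha T)$, since the two middle copies of $T$ have the same (finite) length. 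Adding the two contributions gives
\[
\length(M/\pi^\alpha M) \;=\; r\alpha + \length(T/\pi^\alpha T) \;=\; r\alpha + \length(T_{\pi^\alpha-\tor}) \;=\; r\alpha + \length(M_{\pi^\alpha-\tor}),
\]
which is $\geq \length(M_{\pi^\alpha-\tor})$, with equality exactly when $r=0$ or $\alpha=0$.

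There is no real obstacle here; the only point to verify is that the length computations are legitimate, which is automatic because $T$ is finitely generated and torsion over the DVR $\Ko$, hence of finite length, and $\Ko/\pi^\alpha\Ko$ has length $\alpha$ by the normalization of the valuation.
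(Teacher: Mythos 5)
Your proof is correct and is exactly the one the paper intends: the paper's own proof consists of the single sentence "This follows from the classification of modules of finite type over a PID," and your argument is precisely the expansion of that remark, decomposing $M\cong \Ko^r\oplus T$ and using additivity of length on the four-term exact sequence for the torsion part.
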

\begin{proof}
This follows from the classification of modules of finite type over a PID. 
\end{proof}

Recall from \eqref{new-definition-energy}  that we have defined the energy $E(L, \metr_1,\metr_2)$ of {continuous semipositive} metrics $\metr_1,\metr_2$ on a line bundle $L$ over $X$. 
The following proposition is our key point to interpret the energy as a non-archimedean volume.

\begin{prop}
\label{prop energy 2'}
Let $L$ be {a} line bundle on $X$ and let $\KX$ be a normal model of $X$.
We consider  
{nef} models $\KL_1$ and $\KL_2$ of $L$ and 
we write $\KL_1 \otimes \KL_2^{-1} = \Ocal(D)$ for 
some  vertical 
Cartier divisor $D$ on $\KX$.
In addition, let $\Mcal$ be a line bundle on $\KX$ with generic fibre $M \coloneqq \Mcal_{|X}$. Then we have 
\begin{align*}
 E(L, \metr_{\KL_1}, \metr_{\KL_2}) =  \lim_{m\to 0} \frac{n!}{m^{n+1}}  \length 
\left( \frac {\Hhat(X,M \otimes L^{\otimes m},\metr_\Mcal \otimes \metr_{\KL_1}^{\otimes m})} {\Hhat(X,M \otimes L^{\otimes m},\metr_\Mcal \otimes \metr_{\KL_2}^{\otimes m})} \right).
\end{align*}
\end{prop}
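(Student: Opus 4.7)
The plan is to interpret both sides via sheaf cohomology on the normal model $\KX$ using Lemma~\ref{sections and metrics}, which gives $\Hhat(X, M \otimes L^{\otimes m}, \metr_\Mcal \otimes \metr_{\KL_i}^{\otimes m}) = \Gamma(\KX, \Mcal \otimes \KL_i^m)$, and then to apply the asymptotic machinery from Section~\ref{section asymptotic} together with a torsion bound exploiting the nefness of $\KL_1,\KL_2$. By \eqref{-algebraic-definition-energy}, the claim becomes
\[
\lim_m \tfrac{n!}{m^{n+1}} \length_{\Ko}\!\bigl(\Gamma(\KX,\Mcal \KL_1^m)/\Gamma(\KX,\Mcal \KL_2^m)\bigr) = \tfrac{1}{n+1} \sum_{j=0}^n \KL_1^j \KL_2^{n-j} \cdot D.
\]

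I would first handle the case of effective $D$. Multiplication by the canonical section $s_{mD}$ of $\Ocal(mD)$ gives the short exact sequence $0 \to \Mcal \KL_2^m \to \Mcal \KL_1^m \to \Mcal \KL_1^m \otimes \Ocal_{mD} \to 0$, whose long exact sequence yields
\[
\length\bigl(\Gamma(\Mcal \KL_1^m)/\Gamma(\Mcal \KL_2^m)\bigr) = h^0(\KX, \Mcal \KL_1^m \otimes \Ocal_{mD}) - \length(T_m),
\]
with $T_m \subseteq H^1(\KX, \Mcal \KL_2^m)$ a $\Ko$-torsion submodule. To compute the leading order of $h^0$, iterate the short exact sequence $0 \to \Ocal_D(-(k-1)D) \to \Ocal_{kD} \to \Ocal_{(k-1)D} \to 0$ and use $\KL_1 \otimes \Ocal(-D) = \KL_2$ to express $\Mcal \KL_1^m \otimes \Ocal_{mD}$ as an iterated extension of the layers $\Fcal \otimes \KL_1^j \otimes \KL_2^{m-j}$ for $j=1,\ldots,m$ with $\Fcal := \Mcal \otimes \Ocal_D$. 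Since $D$ is vertical, $\Fcal$ is supported over the closed point of $S$, so Proposition~\ref{prop asymptotic chi} yields uniformly in $j$
\[
\chi\bigl(\Fcal(j\KL_1+(m-j)\KL_2)\bigr) = \tfrac{1}{n!}(j\KL_1+(m-j)\KL_2)^n\cdot D + O(m^{n-1}).
\]
Summing against the binomial expansion and the Beta-function Riemann sums $\sum_{j=1}^m j^i(m-j)^{n-i} = \tfrac{i!(n-i)!}{(n+1)!}m^{n+1}+O(m^n)$ gives $\chi(\Mcal \KL_1^m \otimes \Ocal_{mD}) = \tfrac{m^{n+1}}{n!(n+1)} \sum_{j=0}^n \KL_1^j \KL_2^{n-j} \cdot D + O(m^n)$, while Corollary~\ref{cor cohomology nef} applied to the same layers (where nefness of both $\KL_1$ and $\KL_2$ is used) gives $h^q(\Mcal \KL_1^m \otimes \Ocal_{mD})=o(m^{n+1})$ for $q\geq 1$; thus $h^0$ and $\chi$ have the same leading asymptotic.

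The main obstacle is bounding $\length(T_m) = o(m^{n+1})$, because $H^1(\KX,\Mcal \KL_2^m)$ is generally not of finite length over $\Ko$. The key observation is that $\Ocal_{mD}$ is killed by $\pi^{Cm}$ for $C := \max_V c_V/b_V$ (with $c_V, b_V$ the multiplicities of $D$ and $\KX_s$ along components $V$ of $\KX_s$), hence so is $T_m$; applying Lemma~\ref{lemma basic torsion} to the long exact sequence of $0 \to \Mcal \KL_2^m \xrightarrow{\pi^{Cm}} \Mcal \KL_2^m \to \Mcal \KL_2^m/\pi^{Cm} \to 0$ gives $\length(T_m) \leq h^1(\KX, \Mcal \KL_2^m/\pi^{Cm})$. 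The $\pi$-filtration of $\Mcal \KL_2^m/\pi^{Cm}$ has $Cm$ graded pieces each isomorphic to $\Mcal|_{\KX_s} \otimes \KL_2^m|_{\KX_s}$, so Corollary~\ref{cor cohomology nef} applied to the projective $\tilde K$-scheme $\KX_s$ of dimension $n$ (with $\KL_2|_{\KX_s}$ nef) yields $h^1(\KX_s, \Mcal|_{\KX_s} \otimes \KL_2^m|_{\KX_s}) = o(m^n)$ and hence $\length(T_m) \leq Cm \cdot o(m^n) = o(m^{n+1})$, completing the effective case. For general vertical $D$, write $D = D_1-D_2$ with $D_1,D_2$ effective and set $\KL_3 := \KL_1 \otimes \Ocal(D_2) = \KL_2 \otimes \Ocal(D_1)$; the cocycle identity for lattice lengths $\length(\Gamma(\Mcal \KL_1^m)/\Gamma(\Mcal \KL_2^m)) = \length(\Gamma(\Mcal \KL_3^m)/\Gamma(\Mcal \KL_2^m)) - \length(\Gamma(\Mcal \KL_3^m)/\Gamma(\Mcal \KL_1^m))$ reduces the general case to two effective-style computations (where the torsion bound in $H^1$ uses nefness of each $\KL_i$ separately), and the telescoping identity $\KL_1^{n+1} - \KL_2^{n+1} = \sum_{i=0}^n \KL_1^i \KL_2^{n-i}\cdot D$ combines the leading terms into the stated energy.
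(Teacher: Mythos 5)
Your overall strategy is correct and follows the same underlying mechanism as the paper --- filter between $\Gamma(\KX,\Mcal\KL_2^m)$ and $\Gamma(\KX,\Mcal\KL_1^m)$ by twists $\Mcal\KL_1^j\KL_2^{m-j}$, estimate the graded pieces via the asymptotic cohomology results of Section~\ref{section asymptotic} on the vertical divisor, and control the $H^1$-error by a torsion argument on a thickening of the special fibre. But you package it differently: the paper uses $m$ short exact sequences $0\to\Fcal^{(m)}_{j-1}\to\Fcal^{(m)}_j\to\Fcal^{(m)}_j|_D\to 0$ (one per step, each restricting to the \emph{fixed} divisor $D$), computes $\length(\Gamma(D,\Fcal^{(m)}_j))$ directly from Corollary~\ref{cor cohomology nef} and sums the Riemann sum, and bounds the $H^1$-torsion at each step on the fixed $\alpha$-thickening $\KX_S=\KX\times_\Ko\Spec(\Ko/\pi^\alpha)$. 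You instead use the single sequence $0\to\Mcal\KL_2^m\to\Mcal\KL_1^m\to\Mcal\KL_1^m\otimes\Ocal_{mD}\to 0$, filter $\Ocal_{mD}$ internally (the resulting layers $\Ocal_D\otimes\Mcal\KL_1^j\KL_2^{m-j}$ are exactly the paper's $\Fcal_j^{(m)}|_D$), compute the leading term via additivity of $\chi$ (Proposition~\ref{prop asymptotic chi}) and vanishing of higher $\widehat h^q$ (Corollary~\ref{cor cohomology nef}), and bound the torsion by working on the $\lceil Cm\rceil$-thickening of $\KX_s$ and its $\pi$-filtration. These are genuinely equivalent routes; yours has the small advantage of only invoking $\chi$-additivity and cohomology on the reduced $\KX_s$ rather than on $\KX_S$, while the paper's step-by-step filtration keeps the passage to the Riemann sum more transparent.

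Two small points to tighten. First, your reduction of general vertical $D$ to the effective case via $D=D_1-D_2$ and $\KL_3:=\KL_1\otimes\Ocal(D_2)=\KL_2\otimes\Ocal(D_1)$ needs $\KL_3$ to be nef (both the $\chi$/higher-cohomology estimate on the layers supported on $D_1$ resp.~$D_2$ and the torsion bound require nefness of both line bundles involved); this holds for \emph{arbitrary} effective $D_1,D_2$ only if you take $D_2=\div(\pi^k)$ so that $\Ocal(D_2)\simeq\Ocal_\KX$ and $\KL_3\simeq\KL_1$, which is precisely the shift the paper uses (and then the telescope is unnecessary --- a direct shift as in the paper is simpler). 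Second, you should replace $C:=\max_V c_V/b_V$ by $\lceil\max_V c_V/b_V\rceil$ (or somewhere round up) so that $\pi^{Cm}\in\Jcal_{mD}$ is meaningful; normality of $\KX$ together with Proposition~\ref{effective model fct} and Lemma~\ref{lem:equalityofmult} is indeed what justifies that $\div(\pi^{Cm})-mD$ is an effective Cartier divisor.
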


\begin{proof}
First, we reduce the claim to the case when $D$ is an 
effective vertical Cartier divisor.
There is a $k \in \N$ such that $D' \coloneqq\div(\pi^k)+ D$ is an effective Cartier divisor {and for $\KL_1'   \coloneqq \KL_1(\div(\pi^k)) \simeq \KL_1$ we get 
$\Ocal(D')= \KL_1' \otimes \KL_2^{-1}$.
Note that $\KL_1'$ is still 
{nef} and $\metr_{\KL_1'}=  |\pi|^{k} \metr_{\KL_1}$.}
{Using the definition of the energy and \ref{chambert loir measure for metrized line bundles}(i), we get 
\[
E(L, \metr_{\KL'_1}, \metr_{\KL_2})  = 
k L^n + E(L, \metr_{\KL_1}, \metr_{\KL_2}).
\]
The same argument as for \eqref{length of proportional lattices} and then \ref{classical volume} and Remark \ref{remark volume nef} yield  
\[
 \length\left( \frac {\Hhat(X,M \otimes L^{\otimes m}, \metr_\Mcal \otimes \metr_{\KL_1'}^{\otimes m})} {\Hhat(X,M \otimes L^{\otimes m},\metr_\Mcal \otimes \metr_{\KL_1}^{\otimes m})} \right)
 = km\, h^0(X,M \otimes L^{\otimes m}) \underset{m \to +\infty}{\sim} k \frac{m^{n+1}}{n!}L^n.
\]
Hence the claim for $D'$ implies the claim for $D$, and we can replace $D$ by $D'$.}

So we may assume that $D$ is an effective vertical Cartier divisor. 
Let $s_D\in \Gamma(\KX,\Ocal(D))$ denote the canonical 
global section of $\Ocal(D)$. 
Note that $\mbox{div}\,(s_D)=D$. 
Let $\varphi_D$ denote the model function associated with $D$.
For $j \in \{0, \dots, m\}$, we use the notation
\begin{equation} \label{definition of twisted sheafs} \Fcal_j^{(m)}  \coloneqq \Mcal \otimes {\KL_1^{\otimes j}} \otimes {\KL_2^{\otimes {m-j}}}.
\end{equation}
For $j \in \{1, \dots, m\}$, we  
consider the short exact sequence 
\begin{equation}
\label{eq ses energy}
0 \to \Fcal^{(m)}_{j-1}  
\xrightarrow[ ]{\otimes s_D} \Fcal_j^{(m)}
 \longrightarrow
 \Fcal_j^{(m)}|_{D}  \longrightarrow 0.
\end{equation}
{The associated long exact sequence in cohomology gives 
\begin{multline}\label{volisenergyeq1} 
0 \longrightarrow \Gamma(\KX, \Fcal^{(m)}_{j-1} ) 
\xrightarrow[ ]{\otimes s_D} 
\Gamma(\KX, \Fcal_j^{(m)})\longrightarrow 
\Gamma(D,\Fcal_j^{(m)}) 
\longrightarrow H^1( \KX, \Fcal^{(m)}_{j-1})  
\longrightarrow \cdots 
\end{multline}
Let us pick $\alpha \in \N$ such that $0\leq \varphi_D \leq \alpha$. 
Using that $\KX$ is normal, Proposition \ref{effective model fct} yields $\pi^\alpha \in\Jcal_D$, {where $\Jcal_D$ is the ideal sheaf of the closed subscheme $D$}.
Hence $D$ is in a natural way a scheme of finite type over 
$S \coloneqq \Spec ( \Ko / \pi^\alpha \Ko)$.
The $K^\circ$-module $\Gamma(D,\Fcal_j^{(m)})$ is 
$\pi^\alpha$-torsion as $\pi^\alpha \in\Jcal_D$. 
Since the restrictions of $\KL_1$ and $\KL_2$ to 
$\KX_S  \coloneqq \KX\times_\Ko {S}$ are nef, 
Corollary \ref{cor cohomology nef} 
 yields that 
\begin{equation}\label{volisenergyeq2}
\length \bigl(H^1 ( \KX_S, \Fcal^{(m)}_{j-1})\bigr)   =o(m^n).
\end{equation}
From the short exact sequence
\[
0\longrightarrow
\Fcal^{(m)}_{j-1} \stackrel{\cdot\pi^\alpha}{\longrightarrow}
\Fcal^{(m)}_{j-1}    \longrightarrow
\Fcal^{(m)}_{j-1}|_{\KX_S}\longrightarrow 0
\]
we get the  exact sequence 
\[
H^1 ( \KX, \Fcal^{(m)}_{j-1} )
\stackrel{\cdot\pi^\alpha}{\longrightarrow}
H^1 ( \KX, \Fcal^{(m)}_{j-1} )\longrightarrow
H^1 ( \KX_S,  \Fcal^{(m)}_{j-1})
\]
and hence the induced homomorphism 
\begin{equation*}
\label{eq trick torsion}
 H^1 \left( \KX, \Fcal^{(m)}_{j-1} \right) / \pi^\alpha H^1 \left( \KX, \Fcal^{(m)}_{j-1} \right)
 \hookrightarrow   H^1 \left( \KX_S, \Fcal^{(m)}_{j-1} \right)
\end{equation*}
is injective.
Together with Lemma \ref{lemma basic torsion} and \eqref{volisenergyeq2} this shows that
\begin{equation}\label{volisenergyeq3}
\length \bigl(H^1 ( \KX, \Fcal^{(m)}_{j-1} )_{\pi^\alpha-\tors}   
\bigr)=o(m^n).
\end{equation} 
Then \eqref{volisenergyeq1} and \eqref{volisenergyeq3} show that 
\begin{equation} \label{crucial quotient estimate}
\length \left( \Gamma(\KX,\Fcal^{(m)}_j) / \Gamma(\KX,\Fcal^{(m)}_{j-1}) \right) = \length \left( \Gamma(D,\Fcal^{(m)}_j) \right) + o(m^n).
\end{equation}}

Let $D_1$ be a Cartier divisor with ${\KL_1}= \Ocal(D_1)$ and  $D_2  \coloneqq D_1-D$. 
Observing \eqref{Mittwoch 3 August}, Corollary \ref{volisenergy9} gives
\begin{equation}\label{volisenergyeq4}
\length \left(\Gamma(D,  \Fcal^{(m)}_j ) \right)
= \frac{m^n}{n!} 
\biggl(\frac{j}{m}D_1 + \Bigl(1-\frac{j}{m}\Bigr)D_2\biggr)^n \cdot D +o(m^n).
\end{equation}

It follows from Lemma  \ref{lemma volume models} that  
$\Hhat(X,M \otimes L^{\otimes m},\metr_\Mcal \otimes \metr_{\KL_1}^{\otimes m}) = \Gamma(\KX,\Fcal^{(m)}_m)$ and 
$\Hhat(X,M \otimes L^{\otimes m},\metr_\Mcal \otimes \metr_{\KL_2}^{{\otimes m}}) = \Gamma(\KX,\Fcal^{(m)}_0)$. 
Hence we have to show that 
\begin{equation} \label{energy limit to show}
\frac{1}{n!}
E(L, \metr_{\KL_1}, \metr_{\KL_2}) =
\lim_{m\to\infty}\frac{1}{m^{n+1}}\length \Bigl( \Gamma(\KX,\Fcal^{(m)}_m) / \Gamma(\KX,\Fcal^{(m)}_0)
 \Bigr).
\end{equation}
Additivity of length, \eqref{crucial quotient estimate} 
and \eqref{volisenergyeq4}
yield  
\[
\frac{1}{m^{n+1}}\length \Bigl( \Gamma(\KX,\Fcal^{(m)}_m) / \Gamma(\KX,\Fcal^{(m)}_0)
 \Bigr)=
\frac{1}{n!\, m} \sum_{j=1}^m  \biggl(\frac{j}{m}D_1 + \Bigl(1-\frac{j}{m}\Bigr)D_2\biggr)^n \cdot D          +o(1).
\]
The limit for $m\to\infty$ exists and is given by the 
sum of Riemann integrals
$$\frac{1}{n!}  \int_{0}^1\
\Bigl(tD_1 + (1-t)D_2\Bigr)^n \cdot D \, dt 
= \frac{1}{n!} \sum_{k=0}^n \binom{n}{k} \, \int_0^1  t^k (1-t)^{n-k}dt \, D_1^k \cdot D_2^{n-k} \cdot D.
$$ 
Using the identity  $\int_0^1 (1-t)^k t^{n-k} dt = ((n+1) \binom{n}{k})^{-1}$, we get 
$$\lim_{m\to\infty}\frac{1}{m^{n+1}}\length \Bigl( \Gamma(\KX,\Fcal^{(m)}_m) / \Gamma(\KX,\Fcal^{(m)}_0)
 \Bigr) ={\frac{1}{n!}}\frac{1}{n+1} \sum_{k=0}^n   D_1^k \cdot D_2^{n-k} \cdot D$$
and hence \eqref{energy limit to show} follows from \eqref{-algebraic-definition-energy}.  
\end{proof}

\begin{theo}\label{cor3 energy}
Let $L$ be {a} line bundle on {the normal} projective variety $X$ and let $\| \ \|_1$ and $\| \ \|_2$ be continuous semipositive metrics on $L^{\an}$. 
Then we have
\begin{equation}\label{vol-is-energy-main-thm}
\vol(L,\metr_1,\metr_2) = E(L, \metr_1,\metr_2).
\end{equation}
Furthermore under our assumptions the $\limsup$ in the
definition of the non-archimedean volume is  a limit.
\end{theo}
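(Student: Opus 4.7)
The plan is to prove Theorem \ref{cor3 energy} by extending the limit formula of Proposition \ref{prop energy 2'} in two stages: first from semipositive algebraic metrics on a common normal model to arbitrary semipositive model metrics, and then from semipositive model metrics to continuous semipositive metrics by uniform approximation. Throughout, I write $a_m(\metr_1,\metr_2)\coloneqq \frac{n!}{m^{n+1}}\,\ell\bigl(\Hhat(X,L^{\otimes m},\metr_1^{\otimes m})/\Hhat(X,L^{\otimes m},\metr_2^{\otimes m})\bigr)$, so that $\vol(L,\metr_1,\metr_2)=\limsup_m a_m(\metr_1,\metr_2)$. The semipositive algebraic case is immediate: if $\metr_i=\metr_{\KL_i}$ with $\KL_i$ a nef model of $L$ on a normal model of $X$, dominate by a common normal model $\KX$ to which both $\KL_i$ pull back as nef models, and apply Proposition \ref{prop energy 2'} with $\Mcal=\Ocal_\KX$ equipped with the trivial metric; this gives $\lim_m a_m(\metr_1,\metr_2)=E(L,\metr_1,\metr_2)$.

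For semipositive model metrics $\metr_1,\metr_2$ that are not algebraic, I would choose $k\geq 1$ such that $\metr_i^{\otimes k}=\metr_{\KL_i}$ for nef models $\KL_1,\KL_2$ of $L^{\otimes k}$ on a common normal model $\KX$ of $X$, and for each residue $r\in\{0,\ldots,k-1\}$ pick any model $\Mcal_r$ of $L^{\otimes r}$ on $\KX$ (after dominating further if necessary so that $L^{\otimes r}$ extends). Writing $m=km'+r$, Proposition \ref{prop energy 2'} applied to $L^{\otimes k}$ with twist $(\Mcal_r,\metr_{\Mcal_r})$ yields
\[
\lim_{m'\to\infty}\frac{n!}{(m')^{n+1}}\,\ell\!\left(\frac{\Hhat(X,L^{\otimes m},\metr_{\Mcal_r}\otimes\metr_{\KL_1}^{\otimes m'})}{\Hhat(X,L^{\otimes m},\metr_{\Mcal_r}\otimes\metr_{\KL_2}^{\otimes m'})}\right) = E(L^{\otimes k},\metr_{\KL_1},\metr_{\KL_2}) = k^{n+1}E(L,\metr_1,\metr_2),
\]
where the last equality follows from the multilinearity of Chambert-Loir measures in \ref{chambert loir measure for metrized line bundles}(i). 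Since $\metr_i^{\otimes m}=\metr_i^{\otimes r}\otimes\metr_{\KL_i}^{\otimes m'}$, Lemma \ref{lemma twisted volume} applied twice to the twist factor shows that replacing the auxiliary twist metric $\metr_{\Mcal_r}$ by the actual $\metr_i^{\otimes r}$ (once in the numerator, once in the denominator) alters the length by $O((m')^n)$, which is negligible after dividing by $(m')^{n+1}$. Combined with $m^{n+1}=k^{n+1}(m')^{n+1}(1+O(1/m'))$, this gives $\lim a_m(\metr_1,\metr_2)=E(L,\metr_1,\metr_2)$ along each residue class modulo $k$, and hence along all of $\N$.

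Finally, for continuous semipositive $\metr_1,\metr_2$ I would approximate uniformly by semipositive model metrics $\metr_i^{(j)}$ with $\varepsilon_j\coloneqq\max_i d(\metr_i^{(j)},\metr_i)\to 0$. The sandwich argument behind Proposition \ref{lemma continuity non-archimedean volume}(b), together with the uniform bound $h^0(X,L^{\otimes m})\leq Cm^n$, produces the pointwise-in-$m$ estimate $|a_m(\metr_1,\metr_2)-a_m(\metr_1^{(j)},\metr_2^{(j)})|\leq C'\varepsilon_j+O(1/m)$. By the previous step, $\lim_m a_m(\metr_1^{(j)},\metr_2^{(j)})=E(L,\metr_1^{(j)},\metr_2^{(j)})$, and $E(L,\metr_1^{(j)},\metr_2^{(j)})\to E(L,\metr_1,\metr_2)$ by the weak convergence of Chambert-Loir measures \ref{chambert loir measure for metrized line bundles}(iii) and the uniform convergence of $-\log(\metr_1^{(j)}/\metr_2^{(j)})$. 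Sending $m$ first and then $j$ to infinity pinches both $\limsup_m$ and $\liminf_m$ of $a_m(\metr_1,\metr_2)$ to $E(L,\metr_1,\metr_2)$, establishing \eqref{vol-is-energy-main-thm} and upgrading the $\limsup$ to a $\lim$. The main obstacle is the second step: without the twisted version of Proposition \ref{prop energy 2'} one could only conclude convergence along the subsequence $m\in k\N$, and Lemma \ref{lemma twisted volume} is what allows us to pass from the auxiliary model metric $\metr_{\Mcal_r}$ to the true fractional factor $\metr_i^{\otimes r}$ at cost $O((m')^n)$.
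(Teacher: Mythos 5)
Your proposal is correct and follows essentially the same route as the paper: reduce to the twisted algebraic case of Proposition \ref{prop energy 2'} along the arithmetic progression $m=km'+r$, use Lemma \ref{lemma twisted volume} to swap the fractional twist factor at a cost of $O((m')^n)$, invoke homogeneity of the energy, and finish by uniform approximation with semipositive model metrics together with continuity of both sides. The paper organizes the final passage slightly differently (it first establishes the equality $\vol=E$ by continuity alone, then separately upgrades the $\limsup$ to a $\lim$ via an explicit $e^{\pm\varepsilon}$ sandwich), but your pointwise-in-$m$ pinching estimate is an equivalent formulation of the same argument.
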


\begin{proof}
{In the following, let $\varphi  \coloneqq -\log(\frac{\| \ \|_1}{\| \ \|_2})$. 
We first prove the claim 
for semipositive model metrics.
Then there exist an integer $k\in \N$, 
nef models $\Ncal_1$ and $\Ncal_2$ of the line bundle {$N  \coloneqq L^{\otimes k}$} such that
${\| \ \|_1^{\otimes k}} = \| \ \|_{\Ncal_1}$ and 
${\| \ \|_2^{\otimes k}} = \| \ \|_{\Ncal_2}$. 
We fix some $r \in \{0,\ldots,k-1\}$ which will play the role of the remainder in the euclidean division by $k$. 
Moreover we fix a model $\Mcal$ of $L^{\otimes r}$.
To have all our models of line bundles defined on the same normal model $\KX$, we pass to a common finer model. 
There is now a vertical Cartier divisor $D$ on $\KX$ such that $\Ocal(D)=\Ncal_1 \otimes \Ncal_2^{-1}$.
Note that we have $\varphi_D = k\varphi$.}

 Then it is enough to study the arithmetic progression made of the integers $m$ of the form 
$m= kq +r$ for $q\in \N$.
By Lemma \ref{lemma twisted volume}, we note that both 
\begin{equation*} 
\label{minor twist 1}
\length \left( \frac {\Hhat(X,L^{\otimes m}, \metr_1^{\otimes m})} {\Hhat(X,L^{\otimes r} \otimes L^{\otimes kq}, \metr_\Mcal  \otimes \metr_{1}^{{\otimes kq}})} \right) 
\text{ and }  
\length \left( \frac {\Hhat(X,L^{\otimes r} \otimes L^{\otimes kq}, \metr_\Mcal \otimes \metr_{2}^{{\otimes kq}})} {\Hhat(X,L^{\otimes m}, \metr_2^{\otimes m})} \right) 
\end{equation*}
equal $O(q^n)$.
Together with additivity of length and ${\metr_i^{\otimes k}}=\metr_{\Ncal_i}$, we get

\begin{align*}
\length \left( \frac {\Hhat(X,L^{\otimes m}, \metr_1^{\otimes m})} {\Hhat(X,L^{\otimes m}, \metr_2^{\otimes m})} \right) 
= \length \left( \frac {\Hhat(X,L^{\otimes r} \otimes L^{\otimes kq}, \| \ \|_\Mcal \otimes \| \ \|_{\Ncal_1}^{\otimes q})} 
{\Hhat(X,L^{\otimes r} \otimes L^{\otimes kq},\metr_\Mcal \otimes \metr_{\Ncal_2}^{\otimes q})} \right) + O(q^n). 
\end{align*}
By Proposition \ref{prop energy 2'}, $\varphi_D=k\varphi$ and the {homogeneity} of the energy, we deduce 
\begin{multline*}
 \length \left( \frac {\Hhat(X,L^{\otimes m}, \metr_1^{\otimes m})} {\Hhat(X,L^{\otimes m}, \metr_2^{\otimes m})} \right)  
=  \frac{q^{n+1}}{n!}   E(L^{\otimes k}, \metr_{\Ncal_1}, \metr_{\Ncal_2}) + o((kq)^{n+1} ) \\
=  \frac{q^{n+1}k^{n+1}}{n!}  E(L, \metr_1, \metr_2) +o((kq)^{n+1} )
 = \frac{ m^{n+1}}{n!} E(L, \metr_1, \metr_2) +o(m^{n+1} )
 \end{multline*}
along the arithmetic progression $(m=kq +r)_{q\in \N}$. 
{This proves the claim for model metrics.}

{Arbitrary continuous semipositive metrics  on $\Lan$ are uniform limits of semipositive model metrics on $\Lan$.}
Then the formula in the theorem follows from the first case as both the non-archimedean volume and 
the Chambert-Loir measure are continuous in $( \metr_1, \metr_2)$ (see Proposition \ref{lemma continuity non-archimedean volume} and 
\ref{chambert loir measure for metrized line bundles}).

It remains to see that the $\limsup$ in the
definition of the non-archimedean volume is  a limit. 
We choose a rational number $\varepsilon>0$.
{For $i=1,2$, there is a semipositive model metric $\metr_i'$ on $\Lan$ with distance to $\metr_i$  bounded by $\varepsilon$ and hence 
$e^{-\varepsilon} \metr_i' \leq \metr_i \leq e^{\varepsilon} \metr_i'$. 
As $e^{\pm\varepsilon} \metr_i'$ are semipositive model metrics, we deduce easily from a sandwich argument, from the first case and using $\varepsilon \to 0$ that the $\limsup$
is  a limit.}
\end{proof}

\begin{rem} 
As S\'ebastien Boucksom pointed out to us, in the proof of \cite[Lemma 3.5]{DEL00}, one can find arguments involving remainders in Euclidean divisions which are similar to some arguments in the proof of Theorem \ref{cor3 energy}.

The kind of use of Riemann sums made in the end of the proof of 
{Proposition \ref{prop energy 2'}}
already appeared in the literature on algebraic volumes.
See for instance \cite[Example 2.3.6]{Laz1} and \cite[Example 2.2]{ELMN05}.

{There is a description of the non-archimedean volume in terms of the energy for arbitrary continuous metrics} if the residue characteristic of $K$ is zero and if $X$ is a smooth projective variety.
Moreover,  the $\limsup$ in the
definition of the non-archimedean volume is again a limit. These results will be shown in Corollary \ref{cor 4 energy}.
\end{rem}

\section{Differentiability}
\label{section differentiability}

As usual, $K$ is a complete discretely valued field with valuation ring $\Ko$. 
Recall that we normalized our absolute value such that $-\log \vert \pi \vert =1$ 
for a uniformizer $\pi$.
Let $X$ be a projective variety over $K$ of dimension $n$.  
In this section, we consider  projective  $\Ko$-models $\KX$ of
$X$. The special fibre will be denoted by $\KX_s$. This is a scheme of
finite type over the residue field $\Kt$, but not necessarily reduced. We denote the
irreducible components of $\KX_s$ by $(E_i)_{i \in I}$ and let $b_i$
denote the multiplicity of $\KX_s$ in $E_i$.

\subsection{Upper-bound{s} for the first cohomology group}
\label{subsection first}
In the following, we will use the notations introduced
in \ref{Neron Severi}.
{Given Cartier divisors $D_1,\ldots,D_n$ on a model $\KX$ of $X$ 
we denote  by $\{D_1\}\cdots \{D_n\}$
the algebraic intersection number in the generic fibre.}

\begin{lemma}
\label{lemma first}
Let $D,M_1, M_2$ be nef divisors and let $\Ncal$ be any line bundle on $\KX$. 
{There exists a function $\rho\colon \N\to \R$ with $\rho(m)=o(m^n)$ as
$m\to \infty$ such that
\[ 
\dim_{\Kt} \Bigl( H^1\bigl(\KX, \Ncal(mD +j(M_1-M_2))\bigr)\otimes_\Ko \Kt \Bigr) \leq 
\frac{m^n}{n!} n \{ D+M_1\}^{n-1}\cdot \{M_2\} + \rho(m).
\]
holds for all $m\in\N$ and all $j\in\{0,\ldots, m\}$.}
\end{lemma}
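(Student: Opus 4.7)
The plan is to reduce cohomology on $\KX$ to cohomology on the special fibre $\KX_s$, then decompose by irreducible components via Proposition~\ref{cor cohomology} and apply the holomorphic Morse inequalities from Lazarsfeld's appendix on each component. Write $\Lcal \coloneqq \Ncal(mD+j(M_1-M_2))$. Tensoring the short exact sequence $0\to \OXcal \xrightarrow{\pi} \OXcal \to \Ocal_{\KX_s}\to 0$ with $\Lcal$ and taking the long exact cohomology sequence gives an injection
\[
H^1(\KX,\Lcal)\otimes_\Ko\Kt\;=\;H^1(\KX,\Lcal)/\pi H^1(\KX,\Lcal)\;\hookrightarrow\; H^1(\KX_s,\Lcal|_{\KX_s}),
\]
so it is enough to bound $h^1(\KX_s,\Lcal|_{\KX_s})$ uniformly in $j$.

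Next I would apply Proposition~\ref{cor cohomology} on $Y=\KX_s$ with $\Fcal=\Ocal_{\KX_s}$ and the three Cartier divisors $D|_{\KX_s}, M_1|_{\KX_s}, -M_2|_{\KX_s}$ carrying coefficients $m,j,j$ (the degenerate cases $j\in\{0,m\}$ are handled analogously with fewer divisors). The twist by $\Ncal|_{\KX_s}$ is absorbed using Proposition~\ref{lemma perturbation3}, which shows that twisting by a fixed line bundle alters $h^1$ by $O(m^{n-1})=o(m^n)$. Since $\length_{\Ocal_{\KX_s,\xi_i}}(\Ocal_{\KX_s,\xi_i})=b_i$, this yields
\[
h^1(\KX_s,\Lcal|_{\KX_s})\;\leq\;\frac{1}{n!}\sum_{i\in I}b_i\,\widehat h^1\bigl(E_i,\Ocal(mD+j(M_1-M_2))|_{E_i}\bigr)+\rho(m),
\]
with $\rho(m)=o(m^n)$ uniformly in $j$, by the uniformity statement in Proposition~\ref{cor cohomology}.

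On each $n$-dimensional projective variety $E_i$ over $\Kt$, I would rewrite $mD+j(M_1-M_2)=(mD+jM_1)-jM_2$, noting that both summands are nef, and invoke the Morse inequality \eqref{eq morse intro}:
\[
\widehat h^1\bigl(E_i,(mD+jM_1-jM_2)|_{E_i}\bigr)\;\leq\;n\,(mD+jM_1)^{n-1}|_{E_i}\cdot jM_2|_{E_i}.
\]
Summing with weights $b_i$ and using the identity $\sum_i b_i E_i=\KX_s=\div(\pi)$ together with the degree formula $A_1\cdots A_n\cdot\KX_s=\{A_1\}\cdots\{A_n\}$ for line bundles on $\KX$, one obtains
\[
\sum_{i\in I}b_i\,(mD+jM_1)^{n-1}|_{E_i}\cdot jM_2|_{E_i}\;=\;j\,\{mD+jM_1\}^{n-1}\cdot\{M_2\}.
\]

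Finally, the binomial expansion $\{mD+jM_1\}^{n-1}=\sum_{k}\binom{n-1}{k}m^{n-1-k}j^k\{D\}^{n-1-k}\{M_1\}^k$, together with nonnegativity of all intermediate intersection numbers (since $D,M_1,M_2$ are nef) and the bound $j\leq m$, gives $\{mD+jM_1\}^{n-1}\cdot\{M_2\}\leq m^{n-1}\{D+M_1\}^{n-1}\cdot\{M_2\}$. Multiplying by $j\leq m$ and the prefactor $n/n!$ yields the claimed inequality. The main obstacle is ensuring uniformity of the error term $\rho$ in $j$; this is guaranteed by the precise form of Proposition~\ref{cor cohomology}, provided one treats the boundary values of $j$ separately, and by the fact that the $\Ncal$-twist introduces only an $O(m^{n-1})$ correction.
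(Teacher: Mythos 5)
Your proposal is correct and follows essentially the same route as the paper: reduce to the special fibre by the multiplication-by-$\pi$ sequence, invoke Proposition~\ref{cor cohomology} to pass to the top-dimensional components weighted by multiplicities, apply the holomorphic Morse inequalities componentwise, and transfer intersection numbers from the special to the generic fibre by flatness before bounding via nefness and $j\le m$. The only cosmetic difference is that you carry $D$, $M_1$, $-M_2$ as three coefficient variables and handle the $\Ncal$-twist through Proposition~\ref{lemma perturbation3}, whereas the paper keeps $M=M_1-M_2$ as a single divisor and folds $\Ncal$ directly into the coherent sheaf $\Fcal$ appearing in Proposition~\ref{cor cohomology}; both choices yield the same bound and the same $o(m^n)$ uniformity in $j$.
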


\begin{proof}
{We will use the notation $\Fcal_{j,m} \coloneqq \Ncal(mD +j(M_1-M_2))$.}  
Let $\pi$ be a uniformizer of the discrete valuation ring $\Ko$ and let $M \coloneqq M_1- M_2$.   
The short exact sequence 
\[ 
0 \longrightarrow \Fcal_{j,m} \stackrel{\cdot \pi}{\longrightarrow} \Fcal_{j,m} \longrightarrow \Fcal_{j,m}|_{\Xs} \longrightarrow 0
\]
yields the long exact sequence 
\[ 
\ldots \longrightarrow H^1(\KX , \Fcal_{j,m}) \stackrel{\cdot \pi}{\longrightarrow} 
H^1(\KX , \Fcal_{j,m}) \longrightarrow 
 H^1(\Xs, \Fcal_{j,m}|_{\Xs}) \longrightarrow  \ldots 
\]
Forming the cokernel of the first map, we obtain an injection 
\[
 H^1(\KX, \Fcal_{j,m}) \otimes_\Ko \Kt  \simeq H^1(\KX, \Fcal_{j,m}) / \pi H^1(\KX, \Fcal_{j,m}) \hookrightarrow   H^1(\Xs, \Fcal_{j,m}|_{\Xs}).
\]
By Proposition \ref{cor cohomology},  we have 
\[
h^1(\Xs, \Fcal_{j,m}|_{\Xs})  \leq \frac{(m+j)^n}{n!} \biggl( \sum_{i\in I} b_i 
\widehat{h}^1 \Bigl( {E_i,\,} \Ocal\Bigl(\frac{m}{m+j}D+\frac{j}{m+j}M\Bigr)\Big|_{E_i}\Bigr) \biggr) + 
{o((m+j)^n)}.
\]
For the cycle ${\rm cyc}(\KX_s)$ associated to $\KX_s$, we have ${\rm cyc}(\KX_s)= \sum_{i \in I} b_i E_i$. 
Now the holomorphic Morse inequalities in Theorem \ref{theo holomorphic morse by Lazarsfeld}
applied on every component $E_i$  
and the above inequality {show} that 
{$h^1(\Xs, \Fcal_{j,m}|_{\Xs})$} is bounded above by 
\[
\frac{(m+j)^n}{n!} \left(n\Bigl(\frac{m}{m+j}D+\frac{j}{m+j}M_1\Bigr)^{n-1} \cdot \frac{j}{m+j}M_2  \cdot {\rm cyc}(\KX_s)     \right)  + o((m+j)^n).
\]
By flatness of $\KX$ over $\Ko$, the degrees of the special fibre  $\KX_s$ and 
the generic fibre $X$ of $\KX$ with respect to $n$ line bundles on $\KX$ are equal (cf. \cite[Prop.~2.10]{Kol96}). 
Hence the above upper bound is equal to 
\[
\frac{m^n}{n!} n  \Bigl\{D+ \frac{j}{m}M_1\Bigr\}^{ n-1} \cdot 
\Bigl\{\frac{j}{m}M_2\Bigr\} + 
o((m+j)^n) 
\leq \frac{m^n}{n!} n \{D+M_1\}^{n-1}\cdot \{M_2\} + {o(m^n)}
\]
using that $D,M_1,M_2$ are nef and $j \leq m$. 
This proves the claim.
\end{proof}

\begin{cor}
\label{cor first}
Let $\pi$ be a uniformizer of $\Ko$, let $D,M_1,M_2$ be nef divisors and let $\Ncal$ be any line bundle on $\KX$.
{There exists a function $\rho\colon \N\to \R$ with 
$\rho(m)=o(m^n)$ as $m\to \infty$ such that {for all $a \in \N$,} $m\in\N$ and all $j\in\{1,\ldots,m\}$,}
 we get 
\[ 
\length\left(  
H^1\big(\KX, \Ncal(mD +j(M_1-M_2))\big)_{\pi^a-{\rm tors}} \right) \leq 
\frac{m^n}{n!} a n \{D+M_1\}^{n-1}\cdot \{M_2\} + {a}\rho(m).
\]
\end{cor}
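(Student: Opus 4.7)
The plan is to reduce the torsion-length estimate to the $\tilde{K}$-dimension estimate already provided by Lemma \ref{lemma first}, via two elementary commutative-algebra steps applied to the finitely generated $\Ko$-module $M \coloneqq H^1\bigl(\KX, \Ncal(mD + j(M_1 - M_2))\bigr)$. Finiteness of $M$ is automatic: $\KX$ is projective over the noetherian ring $\Ko$, and $\Ncal(mD + j(M_1 - M_2))$ is coherent, so its higher cohomology is a finitely generated $\Ko$-module.

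The first step is to apply Lemma \ref{lemma basic torsion} to $M$, which yields
\[
\length\bigl(M_{\pi^a-\tor}\bigr) \leq \length\bigl(M/\pi^a M\bigr).
\]
The second step is to control $\length(M/\pi^a M)$ by $\dim_{\Kt}(M \otimes_\Ko \Kt)$ using the $\pi$-adic filtration
\[
M \supseteq \pi M \supseteq \pi^2 M \supseteq \cdots \supseteq \pi^a M.
\]
For each $k \in \{0, \ldots, a-1\}$, multiplication by $\pi^k$ induces a surjection $M/\pi M \twoheadrightarrow \pi^k M/\pi^{k+1} M$, so each successive quotient has length at most $\dim_{\Kt}(M \otimes_\Ko \Kt)$. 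Summing over $k$ gives
\[
\length(M/\pi^a M) \leq a \cdot \dim_{\Kt}(M \otimes_\Ko \Kt).
\]

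The third and final step is to insert the bound on $\dim_{\Kt}(M \otimes_\Ko \Kt)$ supplied by Lemma \ref{lemma first}, which is exactly
\[
\dim_{\Kt}(M \otimes_\Ko \Kt) \leq \frac{m^n}{n!}\, n\, \{D+M_1\}^{n-1}\cdot \{M_2\} + \rho(m)
\]
for some $\rho$ with $\rho(m) = o(m^n)$ independent of $j \in \{0,\ldots,m\}$. Chaining the three inequalities and multiplying through by $a$ yields the claim with the same function $\rho$. There is no real obstacle here: the content of the statement is already in Lemma \ref{lemma first}, and the $\pi$-adic filtration argument is a standard device to pass from a mod-$\pi$ bound to a mod-$\pi^a$ (and hence $\pi^a$-torsion) bound at the cost of the multiplicative factor $a$.
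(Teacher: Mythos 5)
Your proof is correct and follows essentially the same route as the paper: reduce to the commutative-algebra inequality $\length(M_{\pi^a\text{-}\tor}) \leq a\,\dim_{\Kt}(M\otimes_{\Ko}\Kt)$ for finitely generated $\Ko$-modules $M$, and then invoke Lemma~\ref{lemma first}. The only difference is cosmetic: the paper states this inequality as a one-line fact, whereas you derive it by chaining Lemma~\ref{lemma basic torsion} with the $\pi$-adic filtration argument.
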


\begin{proof}
Since $\KX$ is projective, $ H^1\big(\KX, \Ncal(mD +j(M_1-M_2))\big)$ is a finitely generated $\Ko$-module.
Since $
\length (M_{\pi^a \text{-tors}}) \leq a \dim_\Kt(M\otimes_\Ko \Kt)
$
holds for any finitely generated $\Ko$-module $M$, the claim follows from Lemma \ref{lemma first}. \end{proof}

\subsection{Bounds for the zeroth cohomology group} 
\label{regularity free}

We continue working with the setup from the beginning of the chapter.
Let $E$ be an effective vertical Cartier divisor on $\KX$ and $s$ the
canonical global section of {$\Ocal(E)$}.
We write the Weil divisor {corresponding to $E$} as 
$\sum_{i \in I} c_i E_i$. We define $\alpha_i  \coloneqq c_i / b_i$ and 
$\alpha  \coloneqq \max_{i \in I} \alpha_i$.
 
Let $D,M_1,M_2$ be nef Cartier divisors on $\KX$. We consider the sum
\begin{equation}
\label{eq deltaD}
\delta_D(M_1,M_2) = \sum_{a,b,c} \{D\}^a \cdot \{M_1\}^b \cdot \{M_2\}^c
\end{equation}
of intersection numbers on $X$, where $(a,b,c) \in \N^3$ with
$a+b+c=n$ and $a \neq n$. By \cite[Prop.~2.10]{Kol96} we have
that 
\[\delta_D(M_1,M_2) = \sum_{a,b,c} D^a \cdot M_1^b \cdot M_2^c\cdot
  {\rm cyc}(\KX_s).\] 
This is non-negative  and will be used in the error terms of 
asymptotic estimates. Note {that the definition of
  $\delta_D(M_1,M_2)$ can be extended to the case when $M_{1}$ and
  $M_{2}$ are $\Q$-divisors and} 
\begin{equation} \label{degree term and epsilon}
\delta_D(\varepsilon M_1, \varepsilon M_2) = O(\varepsilon)
\end{equation}
for $\varepsilon \to 0$ in $\Q_{\geq 0}$. 
Let further $\Ncal$ be an arbitrary line bundle on $\KX$.

\begin{lemma}
\label{lemma zeroth'} \label{lemma zeroth}
There is an explicit constant $C_n>0$ depending only on $n$ such 
that for all $X,\KX,D,E, M_1, M_2, \Ncal$ as above,   
there exists a function $\rho\colon \N\to \R$ with 
$\rho(m)=o(1)$ as $m\to \infty$ such that for all $m\in\N$ and all $j\in\{0,\ldots, m\}$ we have 
\[
\left| \frac{n!}{m^n}h^0\bigl( E   , \Ncal(mD + j(M_1-M_2))|_{E} \bigr)  - D^n \cdot E \right| \leq C_n\delta_D(M_1,  M_2)\alpha + {\rho(m)}.
\]
\end{lemma}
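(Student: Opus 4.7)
My plan is to pass from $h^0$ to the Euler characteristic $\chi$ via $|h^0-\chi|\leq\sum_{i\geq1}h^i(E,\cdot)$ and estimate both sides asymptotically in $m$, uniformly in $j\in\{0,\ldots,m\}$. First, apply the asymptotic Riemann-Roch formula (Proposition \ref{prop asymptotic chi}) to the coherent sheaf $\Ncal\otimes\Ocal_E$ on $\KX$ (supported over the closed point of $\Spec\Ko$, since $E$ is vertical), using divisors $D,M_1,-M_2$ with coefficients $m,j,j$. Since $m+2j\leq 3m$ the error is $O(m^{n-1})$ uniformly in $j$, giving
\[
\chi\bigl(E,\Ncal(mD+j(M_1-M_2))|_E\bigr) = \tfrac{1}{n!}(mD+jM_1-jM_2)^n\cdot E + O(m^{n-1}).
\]
Expanding by multilinearity isolates the wanted contribution $m^nD^n\cdot E$; each remaining summand has the form $(-1)^c\binom{n}{a,b,c}m^aj^{b+c}D^aM_1^bM_2^c\cdot E$ with $a+b+c=n$ and $a<n$. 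Using $j\leq m$ to bound $m^aj^{b+c}\leq m^n$, the inequality $c_i\leq\alpha b_i$, and the flatness identity $\sum_i b_iD^aM_1^bM_2^c\cdot E_i=\{D\}^a\{M_1\}^b\{M_2\}^c$ (cf.\ \cite[Prop.~2.10]{Kol96}), each such term is bounded in absolute value by $\alpha m^n\{D\}^a\{M_1\}^b\{M_2\}^c$. Summing yields
\[
\bigl|\tfrac{n!}{m^n}\chi-D^n\cdot E\bigr|\leq C'_n\,\alpha\,\delta_D(M_1,M_2)+O(m^{-1})
\]
for an explicit constant $C'_n$ depending only on $n$.

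Next, I will bound $h^i(E,\cdot)$ for $i\geq 1$. When $j=0$, Corollary \ref{cor cohomology nef} applied to the nef divisor $D$ yields $h^i=o(m^n)$. When $j\geq 1$, Proposition \ref{cor cohomology} applied to $\Ncal\otimes\Ocal_E$ with divisors $D,M_1,-M_2$ and coefficients $m,j,j$ (summing to $O(m)$, so the error $o((m+2j)^n)$ is $o(m^n)$ uniformly in $j$) gives
\[
h^i(E,\ldots)\leq\tfrac{1}{n!}\sum_{c_i>0}c_i\,\widehat{h}^i\bigl(E_i,(mD+jM_1-jM_2)|_{E_i}\bigr)+o(m^n).
\]
On each component $E_i$ the class $(mD+jM_1)-jM_2$ is a difference of two nef $\R$-divisors. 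The holomorphic Morse inequality (Theorem \ref{theo holomorphic morse by Lazarsfeld}), extended from $\Z$ to $\R$ by homogeneity and the continuity of $\widehat{h}^i$ on finite-dimensional real subspaces (Proposition \ref{lemma volume real divisors'}), gives
\[
\widehat{h}^i(E_i,\cdot)\leq\binom{n}{i}(mD+jM_1)^{n-i}\cdot(jM_2)^i\cdot E_i.
\]
Expanding, every summand carries the factor $M_2^i$ with $i\geq 1$, hence by the same flatness identity and the bound $c_i\leq\alpha b_i$ corresponds to a term $\alpha\{D\}^a\{M_1\}^b\{M_2\}^c$ with $a+b+c=n$ and $c=i\geq 1$; such terms are exactly summands of $\delta_D(M_1,M_2)$. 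We conclude that $h^i(E,\ldots)\leq\tfrac{m^n}{n!}C''_{n,i}\alpha\,\delta_D(M_1,M_2)+o(m^n)$ uniformly in $j$.

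Combining both estimates yields
\[
\bigl|\tfrac{n!}{m^n}h^0(E,\ldots)-D^n\cdot E\bigr|\leq C_n\,\alpha\,\delta_D(M_1,M_2)+\rho(m)
\]
with $C_n=C'_n+\sum_{i=1}^n C''_{n,i}$ depending only on $n$, and $\rho(m)=o(1)$ absorbing the $O(m^{-1})$ term from Riemann-Roch together with the $o(m^n)/(m^n/n!)$ contributions from the higher cohomology bounds. The main technical obstacle is to keep all error estimates uniform in $j\in\{0,\ldots,m\}$ while respecting the non-reduced structure $E=\sum c_iE_i$ of the vertical divisor, and to isolate exactly the mixed intersections $\{D\}^a\{M_1\}^b\{M_2\}^c$ with $a<n$ together with the multiplicity factor $\alpha$ that make up the right-hand side $C_n\,\alpha\,\delta_D(M_1,M_2)$.
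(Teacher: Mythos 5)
Your proposal follows essentially the same strategy as the paper's proof: express $h^0$ via the Euler characteristic plus higher-cohomology corrections, apply Proposition \ref{prop asymptotic chi} for $\chi$, bound $h^q$ for $q\geq 1$ via Proposition \ref{cor cohomology} together with the holomorphic Morse inequalities of Theorem \ref{theo holomorphic morse by Lazarsfeld}, use $\mathrm{cyc}(E)\leq \alpha\,\mathrm{cyc}(\KX_s)$ and flatness to replace intersection numbers on the special fibre by $\{D\}^a\{M_1\}^b\{M_2\}^c$ on the generic fibre, and isolate the terms making up $\delta_D(M_1,M_2)$. The arguments match in substance; a minor remark is that you do not actually need to re-extend Morse from $\Z$ to $\R$-divisors yourself, since the paper's Theorem \ref{theo holomorphic morse by Lazarsfeld} is already stated for nef real divisors.
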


\begin{proof}
We argue similarly as in the proof of Lemma \ref{lemma first}. 
For all $q \geq 0$, it follows from  
Proposition \ref{cor cohomology} and the holomorphic Morse inequalities 
\ref{theo holomorphic morse by Lazarsfeld}
that 
\begin{equation} \label{holomorphic Morse inequalities for coherent sheaf}
h^q( E, \Ncal(mD + j(M_1-M_2))|_{E} )\leq \frac{m^n}{n!} \binom{n}{q} 
\Bigl(D+\frac{j}{m}M_1\Bigr)^{n-q} \cdot \Bigl(\frac{j}{m} M_2\Bigr)^q \cdot E +
{\tilde\rho(m+j)}
\end{equation}
for some function 
$\tilde\rho\colon \N\to\R$ with $\tilde\rho(m)=o(m^n)$
as $m\to\infty$.
Using that $D,M_1,M_2$ are nef and using that the Weil divisor 
{${\rm cyc}(E)$ associated to $E$ satisfies 
${\rm cyc}(E)\leq \alpha \cdot {\rm cyc}(\KX_s)$},
we may replace $E$ in the bound \eqref{holomorphic Morse inequalities for coherent sheaf} by  $\alpha \cdot {\rm cyc}(\KX_s)$. 
As before, since the model $\KX$ is flat, the degree of the special fibre $\KX_s$ with respect to line bundles on $\KX$ agrees 
with the corresponding degree of the generic fibre $X$. For all $q\geq
1$, we deduce from \eqref{holomorphic Morse inequalities for coherent
  sheaf} and $j/m \leq 1$ that there is an explicit constant
  $C'_n$ depending only on $n$ such that   
\begin{equation} \label{error term for coherent sheaf}
 h^q\bigl(  E ,  \Ncal(mD + j(M_1-M_2))|_{E} \bigr) \leq
{\frac{m^n}{n!}}
  \alpha C'_n\delta_D(M_1,  M_2) 
 +\rho'(m)
\end{equation}
holds for all $m\in\N$ and $j\in\{1,\ldots,m\}$ with $\rho'(m)   \coloneqq \max\{
\tilde\rho(m+i)\mid 1\leq i\leq m\}$.

{By Proposition \ref{prop asymptotic chi},} the Euler characteristic $\chi(  E , \Ncal(mD + j(M_1-M_2))|_{E}  )$ equals
\begin{equation} \label{asymptotic RR for coherent sheaf}
\frac{m^n}{n!}  \sum_{q=0}^n (-1)^q \binom{n}{q}  \Bigl(D+\frac{j}{m}M_1\Bigr)^{n-q}  \cdot
\Bigl(\frac{j}{m} M_2\Bigr)^q \cdot E + O(m^{n-1}).
\end{equation}
Expanding \eqref{asymptotic RR for coherent sheaf}, 
bounding all terms involving at least one $M_i$ by 
$ C''_n\delta_D(M_1,  M_2)\alpha$ as above,  
using again $\cyc(E) \leq \alpha \cdot \cyc(\KX_s)$ and 
\eqref{error term for coherent sheaf}, we get the claim.
\end{proof}

\subsection{A filtration argument}
\label{subsection cutting}
 We consider a projective normal variety $X$ over $K$ with a projective normal model $\KX$ over $\Ko$. 
Let $f$ be a $\Z$-model function determined on $\KX$ by a vertical 
Cartier divisor $V \in {\rm Div}_0(\KX)$.   
In this situation we will write $\Ocal(f)  \coloneqq \Ocal(V)$.  

Since $\KX$ is projective, 
we can write $\Ocal(f) = \Ocal(M_1-M_2)$ for nef Cartier divisors $M_1,M_2$ on $\KX$. 
We consider a nef Cartier divisor $D$ on $\KX$ and we will use again
$\delta_D(M_1,M_2)$ from \ref{regularity free} to bound error terms.

In the following result, we assume $f \leq 0$. Then 
Proposition \ref{effective model fct} yields that the Cartier divisor
$E \coloneqq -V$ is effective 
and we denote the canonical global section of
$\Ocal(E)$ by $s$. 
We consider also an arbitrary line bundle $\Ncal$ on $\KX$.

\begin{lemma}
\label{lemma inductive step}
There is an explicit constant $C_n >0$ depending only on $n$ such that for every $X,\KX,D,f \leq 0, M_1, M_2, \Ncal$ as above 
{there exists a function $\rho\colon \N\to \R$ with $\rho(m)=o(1)$ as
$m\to\infty$ such that
\[
\left| \frac{n!}{m^n} {\length} \left( \frac {\Gamma(\KX,\Fcal_{j+1,m} )} {\Gamma(\KX,\Fcal_{j,m} ) } \right) - \int_\Xan f c_1(\Ocal(D))^{\wedge n} \right| 
\leq C_n \delta_D(M_1,  M_2) \cdot  \lceil |f|_{\rm sup} \rceil +\rho(m)
\]
holds for all $m \in \N$ and all $j\in\{0,\ldots,m-1\}$
where $\Fcal_{j,m} \coloneqq \Ncal(mD +j(M_1-M_2))$.}  
\end{lemma}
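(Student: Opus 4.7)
The plan is to exploit the canonical section $s$ of $\Ocal(E)$, where $E\coloneqq -V$. By Proposition \ref{effective model fct} combined with the normality of $\KX$ and the hypothesis $f\leq 0$, the divisor $E$ is effective, and $\Ocal(E)=\Ocal(M_2-M_1)$. Tensoring the tautological exact sequence $0\to\Ocal(-E)\to\Ocal_{\KX}\to\Ocal_E\to 0$ with $\Fcal_{j,m}$ and using $\Fcal_{j,m}\otimes\Ocal(-E)=\Fcal_{j+1,m}$ produces
\[
0\longrightarrow\Fcal_{j+1,m}\stackrel{\cdot s}{\longrightarrow}\Fcal_{j,m}\longrightarrow\Fcal_{j,m}|_E\longrightarrow 0.
\]
From the associated long exact cohomology sequence, denoting by $\Ical$ the image of the connecting map $\Gamma(E,\Fcal_{j,m}|_E)\to H^1(\KX,\Fcal_{j+1,m})$, I will extract the identity
\[
\length\!\left(\frac{\Gamma(\KX,\Fcal_{j,m})}{\Gamma(\KX,\Fcal_{j+1,m})}\right)=h^0(E,\Fcal_{j,m}|_E)-\length(\Ical),
\]
which equals the negative of the length appearing in the lemma, by Definition \ref{defi length}.

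Next I will control $\length(\Ical)$. The multiplicity formula \eqref{lem:equalityofmult-eq1} applied to $E$ yields $c_i/b_i=\varphi_E(x_i)=-f(x_i)\leq|f|_{\sup}$ for every irreducible component $E_i$ of $\KX_s$, where $x_i$ is its divisorial point. Hence $\Ocal_E$, and therefore $\Ical$, is annihilated by $\pi^{\lceil|f|_{\sup}\rceil}$. Corollary \ref{cor first}, applied with $a=\lceil|f|_{\sup}\rceil$, then bounds $\length(\Ical)$ by
\[
\frac{m^n}{n!}\lceil|f|_{\sup}\rceil\,n\{D+M_1\}^{n-1}\cdot\{M_2\}+\lceil|f|_{\sup}\rceil\cdot\rho_1(m).
\]
Since every term in the binomial expansion of $\{D+M_1\}^{n-1}\cdot\{M_2\}$ contains the factor $\{M_2\}$, this intersection number is dominated by $C'_n\,\delta_D(M_1,M_2)$ for a constant $C'_n$ depending only on $n$. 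At the same time, Lemma \ref{lemma zeroth} applied to this same $E$ (whose $\alpha$ we have just seen is at most $\lceil|f|_{\sup}\rceil$) gives
\[
\Bigl|\tfrac{n!}{m^n}h^0(E,\Fcal_{j,m}|_E)-D^n\cdot E\Bigr|\leq C_n\,\delta_D(M_1,M_2)\lceil|f|_{\sup}\rceil+\rho_2(m).
\]

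I will finish by invoking \eqref{lem:equalityofmult-eq2}, which identifies $D^n\cdot E=\int_{\Xan}\varphi_E\,c_1(\Ocal(D))^{\wedge n}=-\int_{\Xan}f\,c_1(\Ocal(D))^{\wedge n}$; combining the two estimates above, negating, and bundling the two $o(1)$ remainders into a single function $\rho$ yields the asserted bound. The subtle point requiring care is that $\rho$ must be uniform in $j\in\{0,\ldots,m-1\}$; this is already built in, since both Corollary \ref{cor first} and Lemma \ref{lemma zeroth} supply error functions uniform in $j$. The substantive obstacle is not any individual step but the bookkeeping by which the factor $\lceil|f|_{\sup}\rceil$ must enter multiplicatively in both the $H^1$-torsion control and the $h^0$-estimate, so that the main term $\int_{\Xan}f\,c_1(\Ocal(D))^{\wedge n}$ emerges cleanly while the error stays of the advertised form $C_n\,\delta_D(M_1,M_2)\lceil|f|_{\sup}\rceil+\rho(m)$.
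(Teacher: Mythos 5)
Your proposal follows the same route as the paper's proof: the same short exact sequence induced by the canonical section of $\Ocal(E)$, the same length identity extracted from the long exact sequence, Lemma \ref{lemma zeroth} for the $h^0$-term, Corollary \ref{cor first} (via the $\pi^a$-torsion of $\Ocal_E$) for the image of the connecting map, and \eqref{lem:equalityofmult-eq2} to identify $D^n\cdot E$ with $-\int_{\Xan}f\,c_1(\Ocal(D))^{\wedge n}$. One small gap to patch: the step from the per-component multiplicity bounds $c_i/b_i\leq|f|_{\sup}$ to "$\Ocal_E$ is annihilated by $\pi^{\lceil|f|_{\sup}\rceil}$" uses that $\div(\pi^a)-E$ has nonnegative associated model function and is therefore an {\it effective Cartier} divisor by Proposition \ref{effective model fct}, which is where the normality of $\KX$ enters; as written you infer the torsion directly from the Weil-divisor multiplicities, which would fail without that proposition.
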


\begin{proof}
Recall that $\int_\Xan f c_1(\Ocal(D))^{\wedge n}$ was introduced in
\S \ref{prelim-monge-ampere}. By Lemma \ref{lem:equalityofmult},  we
have 
 \begin{equation} \label{CL and intersection numbers}
\int_\Xan (-f) c_1(\Ocal(D))^{\wedge n} = D^n \cdot E.
\end{equation}
The section $s$ determines a short exact sequence of coherent sheaves on $\KX$:
\begin{equation}
\label{equation short exact sequence s}
0 \longrightarrow \Fcal_{j+1,m} \stackrel{\otimes s}{\longrightarrow}
\Fcal_{j,m} \longrightarrow
\Fcal_{j,m}|_{E} \longrightarrow 0
\end{equation}
The associated long exact sequence in cohomology is 
\begin{multline}
\label{equation long exact sequence s}
0 \to \Gamma( \KX, \Fcal_{j+1,m}) \xrightarrow[]{\otimes s} 
\Gamma(\KX, \Fcal_{j,m} )  \xrightarrow[]{\phi_j}
\Gamma (E, \Fcal_{j,m})  \xrightarrow[]{\psi_j}
H^1( \KX, \Fcal_{j+1,m} ) \to \ldots .
\end{multline}
We have to compute 
$\length ( \im(\phi_j)) = \length \left(
  \Gamma(\KX,\Fcal_{j,m}) /   \Gamma(\KX,\Fcal_{j+1,m}) 
  \right)$.  
Using the obvious relation 
$
\length ( \Gamma (E, \Fcal_{j,m}))   = 
\length( \ker(\psi_j) ) + \length ( \im(\psi_j)) 
$ and  $ \im(\phi_j) = \ker(\psi_j)$, 
we deduce that  
\begin{equation} \label{length identity}
\length ( \im(\phi_j) )= \length ( \Gamma (E, \Fcal_{j,m})  ) 
-  \length( \im(\psi_j)).
\end{equation}
Using the notation from \S \ref{regularity free}, we have  $\alpha_i ={-} f(x_i)$,  hence Lemma \ref{lemma zeroth}  and \eqref{CL and intersection numbers} give 
\begin{equation} \label{first term bound}
 \left|\frac{n!}{m^n} \length ( \Gamma (E, \Fcal_{j,m})) - \int_\Xan
   (-f) c_1(\Ocal(D))^{\wedge n} \right| \leq C_n \delta_D(M_1,M_2)
 \cdot   |f|_{\rm sup} + \rho(m).
\end{equation}
For  $a  \coloneqq \lceil |f|_{\rm sup} \rceil$, the model function associated to the Cartier divisor $\div(\pi^a) {-} E$ equals $a + f \geq 0$ and 
hence Proposition \ref{effective model fct} shows that 
$\div(\pi^a) -E$ is an effective Cartier divisor on $\KX$. 
We deduce that $\KO_{E}$ is $\pi^{a}$-torsion and thus  
\[\im(\psi_j) \subset  H^1(\KX, \Fcal_{j+1,m})_{\pi^a\text{-tors}}.\]
This allows us to bound $\length(\im(\psi_j))$ using Corollary
\ref{cor first}. With \eqref{length identity} and \eqref{first term
  bound}, we get
\begin{equation} \label{conclusion length id}
\left|\frac{n!}{m^n} \length(\im(\phi_j))- \int_\Xan (-f)
  c_1(\Ocal(D))^{\wedge n} \right| \leq C_n \delta_D(M_1,M_2) \cdot
a + \rho(m) 
\end{equation} 
for larger $C_n$ and $ \rho$. By $\length ( \im(\phi_j)) =  \length \left(
  \Gamma(\KX,\Fcal_{j,m}) /   \Gamma(\KX,\Fcal_{j+1,m}) 
  \right)$, we get the claim. 
\end{proof}

\subsection{From model metrics to continuous semipositive metrics}
\label{subsection from formal}
In this subsection, $X$ is a normal {projective }variety of dimension $n$ 
over $K$ with a line bundle $L$. 
We will generalize 
the result from \S \ref{subsection cutting} to a continuous semipositive metric $\metr$ on $\Lan$ (cf. \S  \ref{closed-forms-pos-metrics}). 
Let $\overline{L} = (L, \metr)$ 
be the corresponding {metrized} line bundle. We will use the notation 
\[
\metr_{g} \coloneqq e^{-g}\metr 
\]
for any continuous function $g\colon \Xan \to \R$. 
If $f$ is a {$\Z$-}model function, if $\KL$ is a model of $L$ 
and if $\overline{L} = (L, \metr_{\KL})$,  
then  {$\metr_{\KL,f} = \metr_{\KL(f)}$ for 
$\KL(f)=\KL\otimes \Ocal(f)$}. 

\begin{art} \label{setup for nef decomposition}
 Let $\KX$ be a projective  $\Ko$-model of $X$ and 
let $f$ be a model function on $\Xan$ 
determined {on $\KX$.
Choose some non-zero $k\in\N$ such that $kf$ is a $\mathbb{Z}$-model
function determined on $\KX$. 
Similarly as before, there is a decomposition 
$\Ocal(kf)=\Ocal(kM_1-kM_2)$ for nef $\Q$-Cartier divisors 
$M_1,M_2$ on $\KX$ such that $kM_1,kM_2$ belong to 
$\textrm{Div}_0(\KX)$}. 

Since $\metr$ is a {continuous} semipositive metric on $\Lan$, it follows from \cite[Lemma 1.2]{BFJ1} that $L$ is nef.
Using algebraic intersection numbers on $X$, we have 
$$\delta_L(M_1,M_2)  \coloneqq \sum_{a,b,c} L^a \cdot \{M_1\}^b \cdot \{M_2\}^c \geq 0,$$
where $(a,b,c)$ ranges over $\N^3$ with $a+b+c=n$ and $a \neq n$.
{Note that in the setup of \eqref{eq deltaD}, we have  
$\delta_D(M_1,M_2) = \delta_L(M_1,M_2)$ for $L \coloneqq \mathcal{O}(D)|_{X}$.}
\end{art}

\begin{prop}
\label{prop from formal}
There is an explicit constant $C_n$ only depending on $n$ such that 
for all $X,L,f,M_1,M_2$ as above and  any continuous 
semipositive metric $\metr$ on $\Lan$, 
we have
$$
\left|\vol(L,\metr_{f}, \metr) - \int_\Xan f c_1(L,\metr)^{\wedge n}\right| \leq C_n\delta_L(M_1,  M_2)|f|_{\rm sup}.
$$
\end{prop}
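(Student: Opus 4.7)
The plan is to reduce step by step to an algebraic setting in which the telescoping Lemma \ref{lemma inductive step} applies directly. Both sides of the desired inequality are continuous in $\metr$: the left-hand side by Proposition \ref{lemma continuity non-archimedean volume}, and the right-hand side by property (iii) of \ref{chambert loir measure for metrized line bundles}. Since continuous semipositive metrics are uniform limits of semipositive model metrics, it suffices to prove the inequality when $\metr$ is a semipositive model metric. After passing to a common tensor power of both $L$ and $f$, one may furthermore assume that $\metr = \metr_{\KL}$ is an algebraic semipositive metric associated to a nef model $\KL$ of $L$ on a normal projective $\Ko$-model $\KX$, and that $f$ is a $\Z$-model function determined on $\KX$ with $\Ocal(f) = \Ocal(M_1-M_2)$ for integral nef Cartier divisors $M_1,M_2$ on $\KX$. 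A final scaling $(L,f)\mapsto(L^{\otimes r},rf)$, $r\to\infty$, will upgrade the intermediate bound we obtain to the sharp form involving $|f|_{\sup}$.

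Next, a translation by $c \coloneqq \lceil\sup f\rceil\in\Z$ reduces the problem to $f\leq 0$: the shifted function $f_0 \coloneqq f - c$ is a $\Z$-model function with $f_0\leq 0$ whose sup-norm is bounded by $2|f|_{\sup}+1$. Since the constant $c$ corresponds to the divisor $-c\KX_s$ and $\{\KX_s\}=0$ in $N^1(X)$, the quantity $\delta_L(M_1,M_2)$ is unchanged; by Lemma \ref{volume of proportional metrics} combined with Remark \ref{remark volume nef}, the difference $\vol(L,\metr_f,\metr) - \int_\Xan f\,c_1(L,\metr)^{\wedge n}$ is likewise invariant under $f\mapsto f_0$. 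Assuming henceforth $f\leq 0$ and writing $\Ocal(f) = \Ocal(-E)$ for an effective vertical Cartier divisor $E$ with canonical global section $s$, I form for each $m\geq 1$ the filtration $\Fcal_{j,m} \coloneqq \KL^{\otimes m}(jf) = \KL^{\otimes m}(-jE)$ for $j=0,\ldots,m$. Multiplication by $s$ yields nested inclusions $\Gamma(\KX,\Fcal_{j+1,m}) \hookrightarrow \Gamma(\KX,\Fcal_{j,m})$ whose two endpoints are by Lemma \ref{sections and metrics} the lattices $\Hhat(X, L^{\otimes m},\metr_f^{\otimes m})$ and $\Hhat(X, L^{\otimes m},\metr^{\otimes m})$.

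Applying Lemma \ref{lemma inductive step} with $D$ a Cartier divisor representing $\KL$ and $\Ncal=\Ocal_\KX$ to each graded piece, summing over $j=0,\ldots,m-1$, dividing by $m$ to obtain the $n!/m^{n+1}$ normalization, and letting $m\to\infty$ so that the $\rho(m)=o(1)$-error disappears, produces both the intermediate inequality
\[
\left|\vol(L,\metr_f,\metr) - \int_\Xan f\, c_1(L,\metr)^{\wedge n}\right| \leq C_n\,\delta_L(M_1,M_2)\,\lceil |f|_{\sup}\rceil
\]
and the fact that the $\limsup$ defining the non-archimedean volume is actually a $\lim$ in this setting. Applying this bound to $(L^{\otimes r},rf)$ in place of $(L,f)$ for arbitrarily large $r\in\N$, using multilinearity of Chambert-Loir measures, the homogeneity $\delta_{L^{\otimes r}}(rM_1,rM_2) = r^n\delta_L(M_1,M_2)$, and the scaling identity $\vol(L^{\otimes r},-,-) = r^{n+1}\vol(L,-,-)$ (which rests on $\limsup=\lim$), then dividing by $r^{n+1}$ and sending $r\to\infty$ replaces the ceiling $\lceil|f|_{\sup}\rceil$ by $|f|_{\sup}$ and absorbs any additive $+1$ from the shift, at the cost of enlarging the dimensional constant $C_n$.

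The main obstacle is the uniform-in-$j$ control of the length of each graded piece $\Gamma(\KX,\Fcal_{j,m})/\Gamma(\KX,\Fcal_{j+1,m})$ within an error of size $o(m^n)$ as $m\to\infty$: only such uniformity permits summing $m$ terms and then dividing by $m$ to still obtain an $o(1)$ remainder. This is precisely what Lemma \ref{lemma inductive step} delivers, and its proof couples the holomorphic Morse inequalities (Theorem \ref{theo holomorphic morse by Lazarsfeld}) applied to each irreducible component of $\KX_s$ with the asymptotic cohomology estimates for non-reduced projective schemes developed in Section \ref{section asymptotic}.
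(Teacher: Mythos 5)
Your strategy and supporting lemma choices match the paper closely — reducing via continuity to a model metric, translating to $f\leq 0$, filtering $\Gamma(\KX,\KL^{\otimes m}(-jE))$ and telescoping with Lemma \ref{lemma inductive step}, and invoking the Morse inequality machinery from Section \ref{section asymptotic}. However, the very first reduction step hides a genuine gap. You "pass to a common tensor power of both $L$ and $f$" so that $\metr$ becomes algebraic (and later, you scale $(L,f)\mapsto(L^{\otimes r},rf)$ and let $r\to\infty$). Both of these steps rely on the homogeneity identity $\vol(L^{\otimes k},\metr_1^{\otimes k},\metr_2^{\otimes k})=k^{n+1}\vol(L,\metr_1,\metr_2)$. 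But the non-archimedean volume is defined as a $\limsup$, and for the pair $(\metr_f,\metr)$ — where $\metr_f$ is \emph{not} semipositive — this $\limsup$ is not a priori a limit (see Remark \ref{conjecture}); indeed Theorem \ref{cor3 energy} establishes $\limsup=\lim$ only when both metrics are continuous semipositive. Replacing $L$ by $L^{\otimes k}$ restricts the $\limsup$ to the subsequence $m\in k\N$: one always has $\limsup_m a_{km}\leq\limsup_m a_m$, so your argument recovers the \emph{lower} bound $\vol(L,\metr_f,\metr)\geq\int_\Xan f\, c_1(L,\metr)^{\wedge n}-C_n\delta_L|f|_{\sup}$ (consistent with Remark \ref{remark differentiability inequality} and Proposition \ref{theorem inequality 1}), but the \emph{upper} bound is lost because the $\limsup$ over all $m$ can exceed the $\limsup$ along a single arithmetic progression. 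Your aside "(which rests on $\limsup=\lim$)" flags the issue but does not resolve it, and since what you establish in the course of the proof is $\limsup=\lim$ for the replaced bundle $L^{\otimes k}$ rather than for $L$ itself, the argument is circular.

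The paper handles this point explicitly: it keeps $L$ fixed, decomposes $\N$ into the arithmetic progressions $m=kq+r$ for $r\in\{0,\ldots,k-1\}$, fixes a model $\Mcal$ of $L^{\otimes r}$, and uses Lemma \ref{lemma twisted volume} to show that altering the metric on the "remainder" factor $L^{\otimes r}$ introduces only an $O(q^n)$ error, which is negligible against $m^{n+1}$. The filtration argument then runs on $\Mcal\otimes\KL^{\otimes q}$, and because the progressions cover all of $\N$, the $\limsup$ over all $m$ is controlled. To repair your proof you would need to insert this arithmetic-progression step (or an equivalent device that covers all residues modulo $k$) before invoking the filtration; the rest of your reasoning — the translation by a vertical divisor to get $f\leq 0$, the invariance of $\delta_L$, the uniform-in-$j$ error from Lemma \ref{lemma inductive step}, and the final limit — is correct as stated.
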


\begin{proof}
We first prove the claim under the assumption that $f {\leq} 0$ 
and that $\metr$ is a semipositive model metric. 
We will proceed similarly as in the proof of Theorem \ref{cor3 energy}.  
We first choose a non-zero $k \in \N$ such that $kf$ is a $\Z$-model
function with $|kf|_{\rm sup} \in \N$, the divisors $kM_1,kM_2$ are Cartier divisors on $\KX$ and  $\metr^{\otimes k}$ is an algebraic metric. 
As we may always pass to a finer model (which does not change the quantities involved), we may assume that {$\metr^{\otimes k}=\metr_\KL$} for a nef line bundle {$\KL$} on $\KX$ {with $\KL|_X=L^{\otimes k}$.}  
We fix some $r \in \{0,\ldots,k-1\}$ and we consider the arithmetic progression $(m= kq +r)_{q \in \N}$. 
By passing to a finer model, we may assume that  $L^{\otimes r}$ has a model
$\Mcal$ on $\KX$ and that $\KX$ is normal.  
Similarly as in the proof of  Theorem \ref{cor3 energy}, we deduce from Lemma \ref{lemma twisted volume} that
\begin{align*}
&\length \left( \frac{\Hhat(X,L^{\otimes m}, \metr_{f}^{\otimes m})}{\Hhat(X,L^{\otimes m}, \metr^{\otimes m})} \right) 
= \length \left( \frac{\Hhat(X,L^{\otimes r} \otimes L^{\otimes {kq}}, \metr_\Mcal \otimes \| \ \|_{\KL(kf)}^{\otimes q})} {\Hhat(X,L^{\otimes r} \otimes L^{\otimes {kq}},\metr_\Mcal \otimes \metr_{\KL}^{\otimes q})} \right) + O(q^n),
\end{align*}
along the arithmetic progression $(m=kq +r)_{q\in \N}$. 

By Lemma \ref{sections and metrics}, the first summand on the right
hand side is equal to
\begin{equation} \label{filtration argument}
 \length \left( \frac{\Gamma(\KX,\Mcal \otimes {\KL(kf)}^{\otimes
       q})}{\Gamma(\KX,\Mcal \otimes {\KL}^{\otimes q})} \right) =  
{\length} \left( \frac {\Gamma(\KX,\Fcal_{q,q} )} {\Gamma(\KX,\Fcal_{0,q} ) }  \right) = 
\sum_{j=0}^{q-1}  {\length} \left( \frac {\Gamma(\KX,\Fcal_{j+1,q} )} {\Gamma(\KX,\Fcal_{j,q} ) }  \right)
\end{equation}
for any decreasing filtration $\Mcal \otimes {\KL}^{\otimes q}= \Fcal_{0,q} \supset \Fcal_{1,q} \supset \dots \supset \Fcal_{q,q}=\Mcal \otimes {\KL(kf)}^{\otimes q}$ into coherent $\Ocal_\KX$-submodules $\Fcal_{j,q}$ of $\Mcal \otimes {\KL}^{\otimes q}$. We will now apply Lemma \ref{lemma inductive step} with $q,\KL, kf, kM_1, kM_2, \Mcal$ instead of $m,\Ocal(D),f, M_1, M_2, \Ncal$ and hence we use the filtration $\Fcal_{j,q}:= \Mcal \otimes 
\KL^{\otimes q} \otimes \Ocal(j(kM_1-kM_2))$. Then Lemma \ref{lemma inductive step} shows that
\begin{equation} \label{application of filtration lemma}
\left| \frac{n!}{q^n} {\length} \left( \frac {\Gamma(\KX,\Fcal_{{j+1},q} )} {\Gamma(\KX,\Fcal_{{j},q} ) } \right) {-} \int_\Xan kf c_1(\KL)^{\wedge n} \right| 
\leq C_n \delta_\KL(kM_1,  kM_2) \cdot   |kf|_{\rm sup} +o(1).
\end{equation}
Now the claim in the special case can be deduced easily from \eqref{filtration argument} and \eqref{application of filtration lemma}.

Next, we skip the above assumption $f \leq 0$. 
Note that $C\coloneqq |f|_{\rm sup} \in \Q$ and hence $C$ is the model
function of a numerically trivial $\Q$-Cartier divisor $E{_{1}}$ on $\KX$ . 
The $\Q$-Cartier divisor $M_1'  \coloneqq M_1 {-} E{_{1}}$ is  nef. 
Replacing $k$ by a suitable multiple, we may assume that $kM_1'$ is also a Cartier divisor on $\KX$.
The decomposition
$\Ocal(k(f - C))=\Ocal(kM_1'-kM_2)$ follows from \ref{setup for nef decomposition}.  
An application of the  above special case to $f - C \leq 0$ gives  
$$\left| \vol(L,\| \ \|_{(f - C)}, \| \ \|)    -
\int_\Xan (f - C) c_1(L, \| \ \|)^{\wedge n} \right| \leq
C_n\delta_L(M'_1,  M_2)|f - C|_{\rm sup}. 
$$ 
We have $\vol(L,\| \ \|_{(f {-} C)}, \| \ \|)=\vol(L,\| \ \|_{f}, \| \ \| ) {-} C L^n$ by Remark \ref{remark volume nef} and 
 by Lemma \ref{volume of proportional metrics}. 
Now \ref{chambert loir measure for metrized line bundles}, $\delta_L(M_1,  M_2) = \delta_L(M'_1,  M_2)$ and $|f{-}C|_{\sup} \leq 2 |f|_{\sup}$ yield
$$ \left| \vol(L,\| \ \|_{f}, \| \ \| ) - 
\int_\Xan f c_1(L, \| \ \|)^{\wedge n} \right| \leq 2 C_n\delta_L(M_1,  M_2)|f|_{\rm sup}. $$
This proves the claim for a semipositive model metric.

Finally, we prove the claim for any continuous semipositive metric $\metr$. By definition, $\metr$ is a uniform limit of semipositive model metrics on $\Lan$ and hence the claim follows from continuity of the non-archimedean volume in Proposition \ref{lemma continuity non-archimedean volume} and of the Chambert--Loir measure in \ref{chambert loir measure for metrized line bundles}.
\end{proof}

\begin{theo}
\label{corollary differentiability 3}
Let $\| \ \|$ be a continuous semipositive metric on $\Lan$ and let {$f$ be a continuous function} on $\Xan$. 
Then if we consider everything fixed except $\varepsilon \in \R $, one has 
\begin{equation} 
\label{eq differentiablity 1}
\vol(L,\| \ \|_{\varepsilon f}, \| \ \| ) {=} \\
\varepsilon  \int_\Xan f c_1(L,\| \ \|)^{\wedge n} + {o(\varepsilon)}
\end{equation} 
for $\varepsilon \to 0$. In the special case of a model function $f$ on $\Xan$, the formula \eqref{eq differentiablity 1} holds even after replacing $o(\varepsilon)$ by $O(\varepsilon^2)$.
\end{theo}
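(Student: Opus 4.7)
The plan is to deduce the theorem from Proposition \ref{prop from formal} in two stages: first establish the special case where $f$ is a model function with the sharper $O(\varepsilon^2)$ bound, then extend to arbitrary continuous $f$ by uniform approximation to obtain the $o(\varepsilon)$ statement.

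For the model-function case, I would start with $f$ determined on a projective $\Ko$-model $\KX$ and choose a decomposition $\Ocal(kf) = \Ocal(kM_1 - kM_2)$ with $kM_1, kM_2$ nef $\Q$-Cartier divisors as in \ref{setup for nef decomposition}. For rational $\varepsilon$, the function $\varepsilon f$ is itself a $\Q$-model function on $\KX$ admitting, after possibly enlarging $k$, the rescaled decomposition with nef $\Q$-Cartier divisors $\varepsilon M_1, \varepsilon M_2$ (swapping the two if $\varepsilon < 0$). The key observation is that $\delta_L(\varepsilon M_1, \varepsilon M_2) = O(\varepsilon)$ by \eqref{degree term and epsilon}, while $|\varepsilon f|_{\sup} = |\varepsilon|\cdot |f|_{\sup}$; substituting these into Proposition \ref{prop from formal} applied to $\varepsilon f$ forces the right-hand side to be $O(\varepsilon^2)$, so \eqref{eq differentiablity 1} holds for rational $\varepsilon$ with remainder $O(\varepsilon^2)$. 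To pass to real $\varepsilon$, I would pick $\varepsilon' \in \Q$ with $|\varepsilon - \varepsilon'| \leq \varepsilon^2$; the Lipschitz continuity of $\vol$ in its metric arguments (Proposition \ref{lemma continuity non-archimedean volume}) together with linearity of the integral in $f$ transfers the bound, keeping the remainder $O(\varepsilon^2)$.

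For arbitrary continuous $f \in C^0(\Xan)$, I would fix $\eta > 0$ and choose, by density of model functions in $C^0(\Xan)$ recorded in \ref{model functions}, a model function $g$ with $|f - g|_{\sup} < \eta$. The special case just established gives $\vol(L, \metr_{\varepsilon g}, \metr) = \varepsilon \int_\Xan g\, c_1(L,\metr)^{\wedge n} + O(\varepsilon^2)$, with implicit constant depending on $g$. Continuity of $\vol$ provides $|\vol(L, \metr_{\varepsilon f}, \metr) - \vol(L, \metr_{\varepsilon g}, \metr)| \leq \vol(L)\cdot |\varepsilon|\cdot \eta$, and the total mass bound $L^n$ for the Chambert-Loir measure gives $\bigl|\int_\Xan(f - g)\,c_1(L,\metr)^{\wedge n}\bigr| \leq L^n \eta$. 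Combining these,
\[
\left|\frac{\vol(L, \metr_{\varepsilon f}, \metr)}{\varepsilon} - \int_\Xan f\, c_1(L,\metr)^{\wedge n}\right| \leq \bigl(\vol(L) + L^n\bigr)\eta + O(\varepsilon).
\]
Taking $\limsup_{\varepsilon \to 0}$ bounds the left side by $(\vol(L) + L^n)\eta$; since $\eta$ was arbitrary, this $\limsup$ is zero, which is the required $o(\varepsilon)$ conclusion.

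The main analytic work has already been done in the preceding sections. The hard part is the model-function bound in Proposition \ref{prop from formal}, which rests on the holomorphic Morse inequalities from the appendix applied via Lemmas \ref{lemma first} and \ref{lemma zeroth} and the filtration argument of Lemma \ref{lemma inductive step}. What makes the $O(\varepsilon^2)$ sharpening possible is the homogeneity of $\delta_L$: every monomial $\{M_1\}^b \{M_2\}^c$ occurring in $\delta_L$ has $b + c \geq 1$ and hence picks up at least one factor of $\varepsilon$ under the rescaling $M_i \mapsto \varepsilon M_i$. The remaining ingredients---the diophantine approximation for the rational-to-real transition and the density/continuity sandwich for the continuous case---are routine.
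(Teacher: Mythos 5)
Your proof is correct. The model-function case is essentially identical to the paper's: apply Proposition~\ref{prop from formal} to the rescaled $\Q$-model function $\varepsilon f$ with the decomposition $\varepsilon M_1, \varepsilon M_2$ for rational $\varepsilon$, use $\delta_L(\varepsilon M_1,\varepsilon M_2)=O(\varepsilon)$ and $|\varepsilon f|_{\sup}=|\varepsilon|\,|f|_{\sup}$ to get the $O(\varepsilon^2)$ remainder, and pass from rational to real $\varepsilon$ by the Lipschitz continuity recorded in Proposition~\ref{lemma continuity non-archimedean volume}.

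Where you diverge from the paper is in the reduction of the continuous case to the model case. The paper argues by contradiction: assuming $\liminf_{\varepsilon\to 0}\varepsilon^{-1}\vol(L,\metr_{\varepsilon f},\metr)$ falls short of the target integral, it chooses a model function $f_\delta$ sandwiched as $f-\delta\leq f_\delta\leq f$ and then derives a contradiction using the monotonicity $\vol(\metr_1,\metr_2)$ decreasing in $\metr_1$ (Proposition~\ref{lemma continuity non-archimedean volume}(a)), with a symmetric argument for the limsup side. You instead give a direct two-sided estimate combining the Lipschitz bound $|\vol(L,\metr_{\varepsilon f},\metr)-\vol(L,\metr_{\varepsilon g},\metr)|\leq \vol(L)\,|\varepsilon|\,\eta$ from \eqref{Lipschitz} with the total-mass bound $\bigl|\int(f-g)\,c_1(L,\metr)^{\wedge n}\bigr|\leq L^n\eta$ from \ref{chambert loir measure for metrized line bundles}, then take $\limsup_{\varepsilon\to 0}$ first and $\eta\to 0$ second. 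Both are valid; your version is arguably cleaner in that it avoids contradiction and handles both the liminf and limsup sides (and both signs of $\varepsilon$) in a single inequality, at the price of invoking the explicit Lipschitz constant $\vol(L)$ and the total-mass bound rather than the more qualitative monotonicity. Note that for the continuous case you only need the $o(\varepsilon)$ version of the model estimate for $g$, so the diophantine-approximation refinement to $O(\varepsilon^2)$ plays no role there; it is only needed for the second assertion of the theorem. A minor simplification you could record: since $L$ carries a continuous semipositive metric, $L$ is nef by Remark~\ref{semipositive metric implies nef}, so $\vol(L)=L^n$ by Remark~\ref{remark volume nef}, and your coefficient $\vol(L)+L^n$ is just $2L^n$.
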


\begin{proof}
It is enough to prove the claim for $\varepsilon > 0$. In the following, all $\varepsilon$ are assumed to be positive.
{We choose the same setup as in \ref{setup for nef decomposition}. 
For  $\varepsilon \in \Q_{>0}$, Proposition \ref{prop from formal} yields 
\begin{equation}
\label{equation epsilon rationnel}
\left|\vol(L,\metr_{\varepsilon f}, \metr) - \varepsilon \int_\Xan f c_1(L,\metr)^{\wedge n}\right| \leq C_n\delta_L(\varepsilon M_1, \varepsilon M_2)|\varepsilon f|_{\rm sup}.
\end{equation}
Using Proposition  \ref{lemma continuity non-archimedean volume}, this inequality and also  $\delta_L(\varepsilon M_1, \varepsilon M_2) = O(\varepsilon)$ from \eqref{degree term and epsilon} can be continuously extended to all $\varepsilon \in \R_{>0}$ and hence \eqref{eq differentiablity 1} follows for model functions.  }

To prove the case of a continuous function $f$, we argue by contradiction. Then either 
\begin{equation} \label{liminf contradiction}
\liminf_{\varepsilon \to 0} \frac{1}{\varepsilon}\vol(L,\| \ \|_{\varepsilon f}, \| \ \| ) <  \int_\Xan f c_1(L,\| \ \|)^{\wedge n}
\end{equation} 
or a reverse strict inequality with the $\limsup$ holds. We will prove that \eqref{liminf contradiction} leads to a contradiction, the case of the $\limsup$ is similar.

Let $\delta >0$. 
By density of model functions \cite[Thm.~7.12]{gubler-crelle}, there is a model function $f_\delta$ with $f-\delta \leq f_\delta \leq  f$. 
By \eqref{liminf contradiction}, we can choose $\delta > 0$ so small that 
\[
\liminf_{\varepsilon \to 0} \frac{1}{\varepsilon}\vol(L,\| \ \|_{\varepsilon f}, \| \ \| ) < \int_\Xan (f - \delta) c_1(L,\| \ \|)^{\wedge n} 
\leq 
  \int_\Xan f_\delta c_1(L,\| \ \|)^{\wedge n}.
  \]
By the model case, the right hand side equals $\liminf_{\varepsilon \to 0} {\varepsilon}^{-1}\vol(L,\| \ \|_{\varepsilon f_\delta},\metr)$. 
This contradicts the monotonicity of the volume as we have $\metr_{\varepsilon f} \leq \metr_{\varepsilon f_\delta}$ using $\varepsilon >0$.
 \end{proof}

\begin{rem}
\label{remark differentiability inequality}
We note here that only the use of the holomorphic Morse inequalities from Theorem \ref{theo holomorphic morse by Lazarsfeld} and our considerations about the asymptotic growth of algebraic volumes in Section \ref{section asymptotic}, applied in the proofs of Lemmas \ref{lemma first} and \ref{lemma zeroth}, allowed us to prove equality in \eqref{eq differentiablity 1}. 
Without using the holomorphic Morse inequalities, we can still prove ``$\geq$'' in  \eqref{eq differentiablity 1} as we explain below. This would have been enough for our applications to orthogonality in Section \ref{section orthogonality} and for the proof of Theorem \ref{theo intro monge ampere}. 
\end{rem}

The following result is a non-archimedean analogue of the main result in Yuan's paper \cite[Thm.~2.2]{Yua08}. It makes the lower bound in Proposition \ref{prop from formal} very explicit and leads to ``$\geq$'' in  \eqref{eq differentiablity 1} with the same arguments as in the proof of Theorem \ref{corollary differentiability 3}.

\begin{prop} 
\label{theorem inequality 1}
Let $f$ be a model function on $\Xan$ with $f \leq 0$ and let $\metr$ be a continuous semipositive metric on $\Lan$. 
Then  $f = - \log(\metr_1/\metr_2)$ for semipositive model metrics $\metr_1,\metr_2$ of a line bundle $M$ on $X$.
 For any such presentation, we have 
\begin{equation}
\label{eq theo inequality 1}
\vol(L,\metr e^{-f}, \metr) \geq   \int_\Xan f 
\left( c_1(L,\metr) + c_1(M,\metr_1) \right)^{\wedge n}.      
\end{equation}
\end{prop}

\begin{proof}
The existence of the presentation is equivalent to a decomposition  $\Ocal(kf)=\Ocal(kM_1-kM_2)$ as in \ref{setup for nef decomposition} and so the existence follows from \ref{setup for nef decomposition}.

To prove \eqref{eq theo inequality 1}, 
we need to review some of the results of this section. 
Under the same assumptions as in Lemma \ref{lemma zeroth}, we get 
the explicit upper bound 
\begin{equation} \label{explicit zeroth lower bound}
\frac{n!}{m^n}h^0\bigl( E   , \Ncal(mD + j(M_1-M_2))|_{E} \bigr)  \leq  (D+M_1)^n \cdot E  + o(1)
\end{equation}
by using the case $q=0$ in \eqref{holomorphic Morse inequalities for coherent sheaf}.
Observe that  
\eqref{holomorphic Morse inequalities for coherent sheaf}  
for $q=0$ is based only 
on the classical Hilbert--Samuel formula and on  
Proposition \ref{cor cohomology}. Under the assumptions and with the notation from Lemma \ref{lemma inductive step}, we get
\begin{equation} \label{explicit filtration lower bound}
\frac{n!}{m^n} {\length} \left( \frac {\Gamma(\KX,\Fcal_{j+1,m} )} {\Gamma(\KX,\Fcal_{j,m} ) }  \right)
\geq 
 \int_\Xan f c_1(\Ocal(D+M_1))^{\wedge n}  + o(1).
\end{equation}
Indeed, starting as in the proof of Lemma \ref{lemma inductive step} and  
using \eqref{length identity}, one gets that 
\begin{equation} \label{trivial lower bound}
 {\length} \left( \frac {\Gamma(\KX,\Fcal_{j,m} )} {\Gamma(\KX,\Fcal_{j+1,m} ) } \right)
\leq {\length} \left( \Gamma ( {E}, \Fcal_{j,m} ) \right) = h^0\bigl( E   , \Ncal(mD + j(M_1-M_2))|_{E} \bigr).
\end{equation} 
Applying \eqref{explicit zeroth lower bound}, we deduce that 
\[
\frac{n!}{m^n}
{\length} \left( \frac {\Gamma(\KX,\Fcal_{j,m} )} {\Gamma(\KX,\Fcal_{j+1,m} ) } \right)
\leq 
 (D + M_1)^{n}\cdot {E} + o(1)
=
  \int_\Xan (-f) c_1(\Ocal(D+M_1))^{\wedge n}  +o(1)
\]
where the last equality follows from Lemma \ref{lem:equalityofmult} applied to the $\Z$-model function $-f$ associated to $E$.  
Multiplying by $-1$, we get \eqref{explicit filtration lower bound}.

Now Proposition \ref{theorem inequality 1} follows from the same arguments as used in the proof of Proposition \ref{prop from formal} just by 
replacing  the application of Lemma \ref{lemma inductive step} in \eqref{application of filtration lemma} by \eqref{explicit filtration lower bound}.
 \end{proof}

\section{Application to orthogonality and  Monge--Amp\`ere equation}
\label{section orthogonality}

In this section  $K$ is a complete discretely
valued field with valuation ring $K^{\circ}$ and residue field
$\tilde K$. 
{At the end of Subsection \ref{subsecton orthogonality}} 
we will assume that ${\rm char}(\tilde{K})=0$.

\subsection{A local approach to semipositivity}\label{local approach to sp}

In this subsection, $L$ is a line bundle on a proper variety $X$ over $K$. It will be important to have a local analytic characterization of semipositive model metrics. This is 
done in \cite[\S 6]{gubler-kuennemann2} over an algebraically closed non-archimedean base field and can be done in a similar way over a complete discretely valued field (see \cite{gubler-martin} for details and generalizations). 
Our analytic objects will be compact strictly $K$-analytic domains $V$ \cite[p.\ 48]{berkovich-book} in the analytification $\Xan$ of $X$. We mimick the construction of algebraic metrics from \ref{models}. We consider now {\it formal models} 
$\fV$ of $V$ which are admissible formal schemes over $\kcirc$  \cite[\S 1]{BL1} with generic fiber $V$. Similarly as in \ref{models}, a formal model $(\fV,\fL)$ of $(V,\Lan|_V)$ induces a metric $\metr_\fL$ on $\Lan|_V$ which we call {\it the formal metric associated to $\fL$}.

Following \cite[6.2]{gubler-kuennemann2} and \cite{gubler-martin}, we say that a model metric $\metr$
on $L^{\an}$ is {\it semipositive in $x \in \Xan$} if there exist  $k \in \N \setminus \{0\}$, a compact
strictly $K$-analytic domain $V$ which is a
neighbourhood of $x$, and a formal model $(\fV,\fL)$ of $(V,(\Lan)^{\otimes k} |_V)$
with $\metr|_V^{\otimes k} = \metr_\fL$ such that for any  curve $Y$
in the special fibre of $\fV$, which is proper over $\tilde K$, we have $\deg_\fL(Y)\geq 0$. 
By \cite[6.5]{gubler-kuennemann2} and \cite[Prop. 3.10]{gubler-martin}, the model metric $\metr$ is
semipositive if and only if it is
semipositive in all $x \in \Xan$.

We will need the following result from \cite[Prop. 3.11]{gubler-martin}.
\begin{prop}
 \label{convexity and max for psh}
Let $\|\ \|_1$ and $\| \ \|_2$ be model metrics on $L^\an$.  
Then the metric $\| \ \| \coloneqq \min ( \| \ \|_1, \| \ \|_2)$ is a
model metric on 
$L$. If $\| \ \|_1$ and $\| \ \|_2$ are semipositive in $x \in
\Xan$, then  
$\| \ \|$ is semipositive in $x$.
\end{prop}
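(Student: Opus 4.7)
The plan is to derive both claims from a single admissible blow-up construction that realizes the pointwise minimum of two model metrics as a model metric, in the spirit of \cite{gubler-kuennemann2, gubler-martin}.

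For the first claim, I would first reduce to algebraic metrics by passing to a common tensor power, finding models $\Lcal_1,\Lcal_2$ of $L^{\otimes k}$ on a common proper normal $\Ko$-model $\Xcal$ of $X$ with $\metr_i^{\otimes k}=\metr_{\Lcal_i}$. Locally, picking a frame $s$ of $\Lcal_2$, the sheaf $\Lcal_1$ is generated by $\lambda s$ for a meromorphic function $\lambda$ on $\Xcal$ which is invertible on the generic fibre (the quotient $\Lcal_1\otimes \Lcal_2^{-1}$ defines a vertical Cartier divisor). Blowing up the coherent fractional ideal locally given by $(1,\lambda)$ yields an admissible blow-up $\sigma\colon \Xcal'\to \Xcal$ on which the pulled-back ideal becomes principal with local generator $g$ satisfying $g\alpha=1$, $g\beta=\lambda$ and $(\alpha,\beta)=\Ocal_{\Xcal'}$. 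Setting $\Lcal := g\cdot \sigma^*\Lcal_2$ gives an invertible subsheaf of $\sigma^*\Lcal_2$ defining a new model of $L^{\otimes k}$, and the identity $(\alpha,\beta)=\Ocal_{\Xcal'}$ forces $|g|=\max(1,|\lambda|)$ at every point of the special fibre, which translates into $\metr_\Lcal = \min(\metr_{\Lcal_1},\metr_{\Lcal_2})$ on $\Xan$.

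For the pointwise semipositivity assertion, I would run the same construction formally near $x$. By the local characterization recalled above, there exist a compact strictly $K$-analytic neighbourhood $V$ of $x$ and a common formal model $\fV$ of $V$ carrying formal models $\fL_1,\fL_2$ of $(\Lan)^{\otimes k}|_V$ with $\metr_i^{\otimes k}|_V = \metr_{\fL_i}$ and $\deg_{\fL_i}(Y)\geq 0$ for every proper curve $Y\subset \fV_s$. The formal analogue of the blow-up above produces $\sigma\colon \fV'\to \fV$ together with a formal model $\fL$ of $(\Lan)^{\otimes k}|_V$ satisfying $\metr_\fL=\min(\metr_{\fL_1},\metr_{\fL_2})|_V$. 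The structural feature I would exploit is that $\fL \simeq \sigma^*\fL_i \otimes \Ocal(E_i)$ for $i=1,2$, with $E_i=\div(\alpha_i)$ effective and vertical, and that the identity $(\alpha_1,\alpha_2)=\Ocal_{\fV'}$ forces $\supp(E_1)\cap \supp(E_2)=\emptyset$. For any proper curve $Y\subset \fV'_s$, disjointness yields an index $i$ with $Y\not\subset \supp(E_i)$, hence $E_i\cdot Y\geq 0$, while $\deg_{\sigma^*\fL_i}(Y)\geq 0$ follows from the projection formula applied to the proper morphism $\sigma$ and the assumed nefness of $\fL_i$ on proper curves of $\fV_s$. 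Adding the two contributions gives $\deg_\fL(Y)\geq 0$, as required.

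The hardest point to verify carefully will be the disjointness of the exceptional supports $\supp(E_1)$ and $\supp(E_2)$, since this is the geometric feature that allows the curve-by-curve nefness test to succeed; it is a direct consequence of the two-generator blow-up producing complementary charts, encoded in $(\alpha_1,\alpha_2)=\Ocal_{\fV'}$. A secondary technicality worth handling is the case of a curve $Y$ contracted by $\sigma$: there the projection formula simply yields $\deg_{\sigma^*\fL_i}(Y)=0$, and one checks $E_i\cdot Y\geq 0$ directly on the exceptional locus, so the argument goes through without change.
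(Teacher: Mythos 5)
The paper does not prove this proposition; it is quoted from \cite[Prop.~3.11]{gubler-martin} (with the key input on formal metrics going back to \cite[Lemma~7.8]{gubler-crelle}). So there is no in-paper proof to compare against, and your attempt must be judged on its own.

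Your proposal is substantively correct and is, in fact, essentially the argument used in the cited references. The reduction to a common power and a common normal model, the local presentation $\Lcal_1=\lambda\cdot\Lcal_2$ with $\lambda$ a unit on the generic fibre, the blow-up of the fractional ideal $(1,\lambda)$, and the resulting identity $|g|=\max(1,|\lambda|)$ (hence $\metr_\Lcal=\min(\metr_{\Lcal_1},\metr_{\Lcal_2})$) are exactly right. The heart of the semipositivity statement is correctly identified as well: the relation $g\alpha=1$, $g\beta=\lambda$, $(\alpha,\beta)=\Ocal_{\Xcal'}$ forces $\Lcal=\sigma^*\Lcal_i\otimes\Ocal(E_i)$ with $E_1=\div(\beta)$, $E_2=\div(\alpha)$ effective, vertical, and with \emph{disjoint} supports; a closed curve in the special fibre therefore avoids at least one of them, and the projection formula (including the contracted-curve case) finishes the argument. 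Two small points to tighten: (i) $\Lcal=g\cdot\sigma^*\Lcal_2$ is not a subsheaf of $\sigma^*\Lcal_2$ but rather contains it --- since $g^{-1}=\alpha\in\Ocal_{\Xcal'}$ we have $\sigma^*\Lcal_1,\sigma^*\Lcal_2\subseteq\Lcal$, and indeed $\Lcal=\sigma^*\Lcal_1+\sigma^*\Lcal_2$ inside the sheaf of meromorphic sections; this is what makes $\metr_\Lcal$ the \emph{smaller} metric, so the sign is right even though the word "subsheaf" is reversed. (ii) In the formal part you should note that the blow-up center is an open (admissible) coherent ideal: writing $\lambda=a/b$ with $a,b$ local equations of vertical divisors, the ideal $(a,b)$ generically equals $(b)$ and contains a power of the uniformizer, so the blow-up is admissible in the sense of \cite[\S1]{BL1}. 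With those two remarks added, the proof is complete.
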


\begin{art}\label{definespecialsingularmetric-art}

Let $s_0 \in \Gamma(X, L) \setminus \{ 0 \}$. 
We define a singular metric $\| \ \|_{s_0}$ on $\Lan$ by

\begin{equation}\label{definespecialsingularmetric}
\| s\|_{s_0}(x) =\Bigg\{
\begin{array}{ccl}
\left| \frac{s}{s_0}(x) \right|&\mbox{if}&
\frac{s}{s_0}\in \Ocal_{\Xan,x},\\
\infty &\mbox{if}&\frac{s}{s_0} \notin \Ocal_{\Xan,x}.
\end{array}
\end{equation}

\end{art}

\begin{lemma}
\label{lemma semipositive section min}
Let $\metr$ be a model metric on $L^{\an}$ and
$s_0 \in \Gamma(X, L) \setminus \{ 0 \}$.
Let $\metr_{s_0}$ be the singular metric defined above. 
Then  $ \metr' \coloneqq \min \left(\metr, \metr_{s_0}
\right)$ is a model metric on $L^{\an}$. If $\metr$ is semipositive in
$x \in \Xan$, then $\metr'$ is also semipositive in $x$. 
\end{lemma}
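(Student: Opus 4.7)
The plan rests on the pointwise identity $\|s\|/\|s\|_{s_0}=\|s_0\|$, valid for any local section $s$ of $L$ not vanishing at the point, which follows directly from $\|s\|_{s_0}=|s/s_0|$ and $\|s\|=|s/s_0|\cdot\|s_0\|$. Hence $\metr\le\metr_{s_0}$ at $x$ if and only if $\|s_0\|(x)\le 1$, giving $\metr'=\metr$ on $\{\|s_0\|\le 1\}\subset\Xan$ and $\metr'=\metr_{s_0}$ on $\{\|s_0\|\ge 1\}$. Rather than using this sublevel decomposition directly, I would split according to $s_0(x)$: if $s_0(x)=0$, continuity of $\|s_0\|$ forces $\|s_0\|<1$ on a whole neighbourhood of~$x$; if $s_0(x)\neq 0$, then $s_0$ is a frame for $\Lan$ in a neighbourhood of~$x$ and $\metr_{s_0}$ is finite and continuous there.

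To prove that $\metr'$ is a model metric, I would argue locally and then glue. For $x$ with $s_0(x)=0$, the neighbourhood $V$ on which $\|s_0\|<1$ satisfies $\metr'|_V=\metr|_V$, which is a formal model metric. For $x$ with $s_0(x)\neq 0$, choose a compact strictly $K$-analytic neighbourhood $V$ of $x$ on which $s_0$ is nowhere vanishing. The section $s_0$ provides an isomorphism $\Lan|_V\simeq\Ocal_V$, under which $\metr_{s_0}|_V$ corresponds to the trivial metric $\|1\|\equiv 1$ on $\Ocal_V$; this is the formal metric of the trivial formal line bundle on any formal model $\fV$ of~$V$. Then $\metr'|_V$ is the minimum of two formal model metrics on~$V$, which is itself a formal model metric by (a local version of) Proposition~\ref{convexity and max for psh}. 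Glueing these local formal models via admissible formal blow-ups and invoking Raynaud's theorem together with Grothendieck's existence theorem on the proper variety~$X$ then yields a global algebraic model realising $\metr'^{\otimes k}$ for some $k\in\N\setminus\{0\}$.

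For the semipositivity statement, assume $\metr$ is semipositive at~$x$ and use the same case split. When $s_0(x)=0$, we have $\metr'=\metr$ near~$x$, so $\metr'$ inherits semipositivity at~$x$. When $s_0(x)\neq 0$, the identification of $\metr_{s_0}|_V$ with the trivial metric on $\Ocal_V$ shows that $\metr_{s_0}$ is the formal metric of the trivial formal line bundle $\Ocal_\fV$, which has degree $0$ on every proper $\tilde K$-curve in $\fV_s$ and is therefore semipositive at~$x$. Proposition~\ref{convexity and max for psh}, applied locally on~$V$ to the two semipositive formal model metrics $\metr|_V$ and $\metr_{s_0}|_V$, then yields semipositivity of $\metr'$ at~$x$. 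The main technical obstacle is the singular behaviour of $\metr_{s_0}$ along $\div(s_0)^\an$, which precludes any direct global application of Proposition~\ref{convexity and max for psh}; the case split circumvents it by noting that on the very locus where $\metr_{s_0}$ is singular one automatically has $\metr'=\metr$, so the singular metric never needs to be treated as a global object.
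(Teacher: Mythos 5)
Your case split and the local analysis on each side of it are exactly the paper's. But two things that you treat as routine are in fact where the paper has to do careful citations, and as written your proposal leaves gaps there.

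First, the globalisation step. You claim that ``glueing these local formal models via admissible formal blow-ups and invoking Raynaud's theorem together with Grothendieck's existence theorem'' yields a global algebraic model of some power of $\metr'$. This is much too vague: Grothendieck existence algebraizes coherent sheaves on completions of noetherian schemes, not directly formal metrics on analytic spaces, and Raynaud's theorem relates admissible formal models to rigid spaces but does not by itself give the glueing. What the paper actually uses is the established fact that ``being a formal metric'' is a $G$-local property with respect to the $G$-topology of compact strictly analytic domains (citing Proposition 5.10 of the Gubler--K\"unnemann paper on positivity of metrics, Proposition 2.8 of Gubler--Martin, and Lemma 1.6.2 of Berkovich to reduce to a strictly affinoid cover), together with the prior identification of formal and algebraic metrics (Proposition 8.13 of Gubler--K\"unnemann). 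Without invoking that $G$-locality lemma, the transition from the two local pictures to a global model metric on $\Lan$ is not justified.

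Second, the semipositivity argument in the case $s_0(x)\neq 0$. You apply ``(a local version of) Proposition~\ref{convexity and max for psh}'' to $\metr|_V$ and $\metr_{s_0}|_V$, but Proposition~\ref{convexity and max for psh} as stated in the paper takes two model metrics on $L^\an$ over all of $\Xan$; there is no local version of it on the page. The paper closes this gap by first invoking Corollary 5.12 of Gubler--K\"unnemann (resp.\ Proposition 2.6 of Gubler--Martin) to extend the restriction $\metr_{s_0}|_V$ --- which over $V$ is just the trivial metric in the frame $s_0$ --- to a \emph{global} algebraic metric on $L^\an$ that agrees with $\metr_{s_0}$ on $V$, and only then applies the global Proposition~\ref{convexity and max for psh}. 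Your observation that the trivial metric is semipositive at $x$ is correct, but the extension step is what allows one to legitimately quote the proposition.

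In short: correct skeleton, same case split, and the same key observation that $\metr_{s_0}$ is trivial near a non-zero of $s_0$; but the glueing to a global model metric and the hypotheses of the minimum-of-semipositive-metrics proposition both need the extension/$G$-locality lemmas that the paper supplies and your sketch replaces with heavy machinery that does not quite apply.
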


\begin{proof}
By passing to a positive tensor power, we may assume that $\metr$ is an algebraic metric. 
It follows from \cite[Prop. 8.13]{gubler-kuennemann} that algebraic 
metrics and formal metrics on $L^{\an}$ are the same as the argument 
in {\it loc.~cit.~}does not use that the base field is algebraically closed.   
Thus, to prove the
first claim, it is enough to show that $\metr'$ is
a formal metric on $L^{\an}$. 
We use the fact that being a formal metric on $L^{\an}$ is a $G$-local
property (cf. \cite[Prop. 5.10]{gubler-kuennemann2} and \cite[Prop.~2.8]{gubler-martin}).
 By
\cite[Lemma 1.6.2]{Berko93}, it is enough to check that for any $y \in
\Xan$, there is a neighborhood $V$ which is a strictly affinoid
domain in $\Xan$ such that $\metr'$ restricts to a formal metric on
$V$.  

{Let us} first assume  $s_0(y) =0$. 
Since $\Xan$ is a good analytic space, there is  a  neighborhood $V$ of $y$  which is a strictly affinoid domain
in $\Xan$ and a frame $s$ of $L$ over $V$ which satisfies 
$\| s(v) \| < \| s(v) \|_{s_0}$ for all $v\in V$.
So $\metr'_{|V} = \metr_{|V}$ is a formal metric on
$L^{\an}|_V$.

If $s_0(y) \neq 0$, then we can find a neighbourhood $V$ of $y$ which
is a strictly affinoid domain in $\Xan$ such that 
${s_0}|_{V}$ is nowhere vanishing. 
So the restriction of $\metr_{s_0}$ to $V$ is isometric to the
trivial metric on $\Ocal_V$ which is formal.
Hence the restriction of $\metr'$ to $V$ is the minimum of two
formal metrics on $V$.
By \cite[Lemma 7.8]{gubler-crelle}, the restriction of 
$\metr'$ to $V$ is also a formal metric on  $\Lan$. This proves the
first claim.

If $\metr$ is semipositive in $x$, then we proceed as in the first
part of the proof with $y  \coloneqq x$ to show that $\metr'$ is semipositive in
$x$. If $s_0(x)=0$, then this follows from the fact that 
$\metr'_{|V} = \metr_{|V}$ is semipositive in $x$. If $s_0(x)\neq 0$
and $V$ is as before, then \cite[Cor. 5.12]{gubler-kuennemann2}
and \cite[Prop. 2.6]{gubler-martin}
give the existence of an algebraic metric on $L^{\an}$ which agrees with
the singular metric $\metr_{s_0}$ over $V$. Since  
$\metr'_{|V}$ is the restriction of the minimum of two model
metrics on $L^{\an}$ which are both semipositive on $V$,  Proposition
\ref{convexity and max for psh} yields that $\metr'$ is semipositive
on $V$. 
\end{proof}

\subsection{A useful property of the semipositive envelope of a metric}
\label{subsection useful property}

Let $X$ be a normal projective $K$-variety. 
Let $L$ be a line bundle on $X$ and
$\metr$ a continuous metric {on} $L^\an$. 
We will assume that the semipositive envelope $P(\metr)$ is a continuous metric. 
If $\cha(\tilde K)=0$ and if $L$ is an ample line bundle on a projective smooth variety, then the semipositive envelope $P(\metr)$ of $\metr$
is a continuous metric on $L^\an$ 
(see Theorem \ref{prelim-semipos-envelope}). 
Going from a continuous metric to its semipositive
envelope does not change the space of small sections as we will show next.

\begin{prop}
\label{prop vol envelope} 
For a continuous metric $\metr$ on the line bundle $\Lan$
such that the semipositive envelope $P(\metr)$ 
is a continuous metric, we have
\begin{equation}
\label{eq equality bounded section} 
\hH^0(X,L,\metr) = \hH^0(X,L,P(\metr)).
\end{equation}
As a consequence, the non-archimedean volume satisfies
\begin{equation}
\label{eq equality volume}
    \vol(\metr,P(\metr))=0.
\end{equation}
\end{prop}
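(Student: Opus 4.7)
My plan is first to prove the set equality \eqref{eq equality bounded section}, from which \eqref{eq equality volume} then follows by a short formal argument using the behaviour of $P(\cdot)$ under tensor powers.

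Since $P(\metr) \geq \metr$ pointwise by construction, the inclusion $\widehat{H}^0(X,L,P(\metr)) \subseteq \widehat{H}^0(X,L,\metr)$ is immediate. For the reverse inclusion, I would fix $s \in \widehat{H}^0(X,L,\metr)$ and consider the singular metric $\metr_s$ attached to $s$ via \eqref{definespecialsingularmetric}. A short local computation in a trivialising frame shows the equivalence
$$
\metr \leq \metr_s \text{ pointwise} \iff \|s\|_\metr \leq 1 \text{ pointwise,}
$$
because in such a trivialisation the ratio $\|t\|_\metr(x)/\|t\|_s(x)$ equals $\|s\|_\metr(x)$ whenever $t/s \in \Ocal_{\Xan,x}$, and $\|t\|_s(x) = +\infty$ otherwise; the same equivalence holds with $\metr$ replaced by any other metric on $\Lan$. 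Now pick an arbitrary semipositive model metric $\metr_1$ with $\metr \leq \metr_1$. Lemma \ref{lemma semipositive section min} shows that $\metr_1' \coloneqq \min(\metr_1, \metr_s)$ is again a semipositive model metric, and one clearly has $\metr \leq \metr_1' \leq \metr_s$. Thus $\metr_1'$ participates in the infimum defining $P(\metr)$, which forces $P(\metr) \leq \metr_s$ pointwise, so $\|s\|_{P(\metr)} \leq \|s\|_{\metr_s} = 1$ everywhere. Hence $s \in \widehat{H}^0(X,L,P(\metr))$, as required.

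To deduce \eqref{eq equality volume}, I would apply \eqref{eq equality bounded section} to $(L^{\otimes m}, \metr^{\otimes m})$ for each integer $m \geq 1$. By Remark \ref{homogenity of envelope}(i) we have $P(\metr^{\otimes m}) = P(\metr)^{\otimes m}$, which is continuous by the standing hypothesis, so \eqref{eq equality bounded section} yields $\widehat{H}^0(X, L^{\otimes m}, \metr^{\otimes m}) = \widehat{H}^0(X, L^{\otimes m}, P(\metr)^{\otimes m})$ for every $m \geq 1$. The quotient lattice appearing in Definition \ref{defi non-archimedean volume} is then trivial for every $m$, and \eqref{eq equality volume} follows at once. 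The main obstacle I expect is the semipositivity-preservation step: truncating a semipositive model metric by the singular metric $\metr_s$ might a priori destroy either the ``model'' character or the semipositivity, and Lemma \ref{lemma semipositive section min} -- resting on the local analytic characterisation of semipositivity developed in Subsection \ref{local approach to sp} -- is precisely what packages both preservations together.
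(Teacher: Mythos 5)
Your proposal is correct and follows essentially the same route as the paper: both proofs rely on Lemma \ref{lemma semipositive section min} to truncate an arbitrary semipositive model metric $\metr_1 \geq \metr$ by the singular metric $\metr_s$, producing a semipositive model metric between $\metr$ and $\metr_s$ and hence forcing $P(\metr)\leq \metr_s$, and both deduce \eqref{eq equality volume} from \eqref{eq equality bounded section} via Remark \ref{homogenity of envelope}. The only difference is cosmetic: the paper phrases the reverse inclusion as a proof by contradiction, while you argue directly.
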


\begin{proof}
Let us first prove \eqref{eq equality bounded section}.
We have $\|s\|\leq P(\|s\|)$
for every section $s\in \Gamma(X,L)$ 
by definition of the semipositive envelope.
This implies  
$\hH^0(X,L,P(\metr)) \subseteq \hH^0(X,L,\metr) $.
Assume that there exists some $s_0\in \hH^0(X,L,\metr)$
which does not belong to the subset $\hH^0(X,L,P(\metr))$.
Then $\|s_{0}\|\le 1$ and there is a point $x_0\in X^{\an}$
with
\begin{equation}\label{eq:2-new}
P(\|s_{0}(x_0)\|)> 1.
\end{equation}
This gives $f  \coloneqq \log \|s_0\|\leq 0$ and the metric 
$\metr_{s_0}=\metr e^{-f}$ introduced in \ref{definespecialsingularmetric-art}
satisfies $\metr\leq \metr_{s_0}$.
For a semipositive model metric $\metr_1 \geq \metr$ on $L^{\an}$, we get
\begin{equation}\label{prime-metric-ineq}
\metr\leq \metr'  \coloneqq \min(\metr_{s_0},\metr_1) \leq \metr_1.
\end{equation}
By Lemma \ref{lemma semipositive section min},  $\metr'$ is a semipositive model metric on $L^{\an}$.
Hence $P(\metr) \leq \metr'$ by \eqref{prime-metric-ineq} and the construction of 
the semipositive envelope. However we have $\|s_0\|_{s_0}=1$ and get
\begin{equation} \label{eq:metr'-new}
\|s_{0}(x)\|'=\min(1,\|s_{0}(x)\|_1) \leq 1
\end{equation}
for all $x \in \Xan$. 
This contradicts  $P(\metr) \leq \metr'$ if we compare \eqref{eq:2-new}
and \eqref{eq:metr'-new}.

Equation \eqref{eq equality volume} is a direct consequence of
\eqref{eq equality bounded section}  by definition
of the non-archimedean volume in \ref{defi non-archimedean volume}
and {Remark \ref{homogenity of envelope}}.
\end{proof}

\begin{cor}\label{cor 4 energy}
Let $L$ be a line bundle on $X$ and let $\| \ \|_1$ and $\| \ \|_2$ be continuous metrics on $L^{\an}$
{whose semipositive envelopes $P(\metr_1)$ 
and $P(\metr_2)$ are continuous metrics}.
Then we have
$\vol(L, \metr_1, \metr_2) = E(L,P(\metr_1),P(\metr_2))$  
and the $\limsup$ in the
definition of the non-archimedean volume is  a limit.
\end{cor}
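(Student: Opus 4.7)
The strategy is to reduce the corollary to Theorem~\ref{cor3 energy} by showing that passing from $\metr_i$ to its envelope $P(\metr_i)$ does not change the lattices of small sections which enter the definition of the non-archimedean volume. First I would note that by Remark~\ref{homogenity of envelope}(ii) the hypothesis that $P(\metr_1)$ and $P(\metr_2)$ are continuous promotes them to continuous semipositive metrics on $L^\an$. Combining this with Remark~\ref{homogenity of envelope}(i) yields $P(\metr_i^{\otimes m}) = P(\metr_i)^{\otimes m}$ for every $m\in\N$, which is therefore again a continuous metric for all $m$.

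The central step is to apply Proposition~\ref{prop vol envelope} to each tensor power $\metr_i^{\otimes m}$, which gives the identity of $\Ko$-lattices
\[
\Hhat(X,L^{\otimes m},\metr_i^{\otimes m}) \;=\; \Hhat(X,L^{\otimes m}, P(\metr_i)^{\otimes m})
\]
inside $H^0(X,L^{\otimes m})$ for $i=1,2$ and every $m\geq 1$. Consequently, for every $m$ the two quotients appearing in the definitions of $\vol(L,\metr_1,\metr_2)$ and of $\vol(L,P(\metr_1),P(\metr_2))$ coincide as $\Ko$-modules, and hence so do their lengths.

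From this termwise identification I would conclude that the sequences of normalized lengths whose $\limsup$ defines $\vol(L,\metr_1,\metr_2)$ and $\vol(L,P(\metr_1),P(\metr_2))$ are literally equal. Since $P(\metr_1)$ and $P(\metr_2)$ are continuous semipositive, Theorem~\ref{cor3 energy} guarantees that the relevant $\limsup$ is a genuine limit and computes it as $E(L,P(\metr_1),P(\metr_2))$. Transporting these two facts back through the equality of sequences immediately yields both assertions of the corollary. I do not foresee any real obstacle here: all the substantive work is already contained in Theorem~\ref{cor3 energy} and Proposition~\ref{prop vol envelope}, and the only point that needs to be verified is the compatibility $P(\metr^{\otimes m}) = P(\metr)^{\otimes m}$ which keeps the envelopes continuous (hence semipositive) after passing to tensor powers.
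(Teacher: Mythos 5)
Your proof is correct and follows essentially the same route as the paper: apply Proposition~\ref{prop vol envelope} (together with Remark~\ref{homogenity of envelope} to handle tensor powers) to identify the lattices $\Hhat(X,L^{\otimes m},\metr_i^{\otimes m})$ with $\Hhat(X,L^{\otimes m},P(\metr_i)^{\otimes m})$ for all $m$, and then invoke Theorem~\ref{cor3 energy} for the continuous semipositive envelopes. The paper states this more tersely, but the homogeneity compatibility you spell out is precisely the unstated ingredient that makes the reduction work.
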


\begin{proof}
For $i=1,2$, Proposition \ref{prop vol envelope} yields 
$$\Hhat(X,L,\metr_i)=\Hhat(X,L,P(\metr_i)), \quad\vol(L,\metr_1,\metr_2)=\vol(L,P(\metr_1),P(\metr_2)).$$
Hence the result follows from Theorem  \ref{cor3 energy} and Remark \ref{homogenity of envelope}. 
\end{proof}

\subsection{The orthogonality property}
\label{subsecton orthogonality}

Let $X$ be a normal  projective $K$-variety of dimension $n$.
After the proof of Theorem \ref{theo ortho new} 
we will assume that $\text{char}( \tilde K) =0$ 
which implies in particular that the semipositive 
envelope $P(\metr)$ of a continuous metric $\metr$
of an ample line bundle on a smooth projective variety over $K$  is a continuous metric by a result
of Boucksom, Favre, and Jonsson (see \ref{prelim-semipos-envelope}).

\begin{defi}
Let $L$ be a  line bundle on $X$. 
Let $\metr$ be a continuous metric on $L^{\an}$
{whose semipositive envelope $P(\metr)$ is continuous.}
We say that \emph{the pair $(L, \metr)$ satisfies the orthogonality property} if 
\begin{displaymath}
\int_{X^{\an}}\log \frac {P(\metr)} {\metr} c_1(L, P(\metr))^{\wedge n} =0.
\end{displaymath}
\end{defi}

\begin{theo}\label{theo ortho new} 
{Let $L$ be a line bundle  on $X$ and
$\metr$ a continuous metric on $L^{\an}$
whose semipositive envelope $P(\metr)$ is a continuous metric.
Then the pair $(L, \metr)$} satisfies the orthogonality property.  
\end{theo}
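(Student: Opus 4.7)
The plan is to combine the vanishing of the non-archimedean volume $\vol(\metr,P(\metr))$ from Proposition \ref{prop vol envelope} with the differentiability formula of Theorem \ref{corollary differentiability 3}, using the continuous semipositive metric $P(\metr)$ as the base point and the non-negative continuous function
\[
g \coloneqq \log\frac{P(\metr)}{\metr}
\]
as the perturbation. Note that $g$ is continuous and non-negative since $\metr \leq P(\metr)$ by definition of the semipositive envelope, and both metrics are assumed continuous.

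The key observation is a sandwich argument. For every $\varepsilon \in [0,1]$, since $g\geq 0$, we have the chain of inequalities of metrics on $L^{\an}$
\[
\metr \ \leq\ P(\metr)\,e^{-\varepsilon g}\ \leq\ P(\metr).
\]
By the monotonicity of the non-archimedean volume in the first argument (Proposition \ref{lemma continuity non-archimedean volume}(a)) together with $\vol(L,\metr,P(\metr))=0$ from Proposition \ref{prop vol envelope} and the trivial identity $\vol(L,P(\metr),P(\metr))=0$, we deduce
\[
0 \ =\ \vol\bigl(L,P(\metr),P(\metr)\bigr) \ \leq\ \vol\bigl(L,P(\metr)\,e^{-\varepsilon g},P(\metr)\bigr)\ \leq\ \vol\bigl(L,\metr,P(\metr)\bigr)\ =\ 0,
\]
so that $\vol(L,P(\metr)\,e^{-\varepsilon g},P(\metr))=0$ identically in $\varepsilon \in [0,1]$.

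Now I would apply the differentiability theorem (Theorem \ref{corollary differentiability 3}) with base metric $P(\metr)$, which is continuous semipositive by hypothesis, and with the continuous function $g$. This yields
\[
\vol\bigl(L,P(\metr)\,e^{-\varepsilon g},P(\metr)\bigr)\ =\ \varepsilon \int_{X^{\an}} g\,c_1(L,P(\metr))^{\wedge n}\ +\ o(\varepsilon)
\]
as $\varepsilon \to 0$. Dividing by $\varepsilon$ and letting $\varepsilon \to 0^{+}$, the left-hand side is identically zero by the sandwich above, so
\[
\int_{X^{\an}} g\,c_1(L,P(\metr))^{\wedge n}\ =\ 0,
\]
which, after substituting $g=\log(P(\metr)/\metr)$, is precisely the orthogonality identity. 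No serious obstacle remains: the hypothesis that $P(\metr)$ is continuous makes $g$ continuous, supplies semipositivity of the base metric needed for Theorem \ref{corollary differentiability 3}, and legitimates the Chambert-Loir measure $c_1(L,P(\metr))^{\wedge n}$. The entire argument reduces the orthogonality property to the combination of Proposition \ref{prop vol envelope} (the equality of small sections) and the differentiability of non-archimedean volumes, which is the main technical achievement of the paper.
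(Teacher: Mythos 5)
Your proposal is correct and follows essentially the same route as the paper: perturb the continuous semipositive metric $P(\metr)$ in the direction of $g=\log(P(\metr)/\metr)$, show $\vol(L,P(\metr)e^{-\varepsilon g},P(\metr))=0$ for $\varepsilon\in[0,1]$, and compare against the first-order expansion from Theorem \ref{corollary differentiability 3}. The only difference is how the vanishing of that volume is established: you obtain it directly from monotonicity of $\vol$ (a sandwich between $\vol(P(\metr),P(\metr))=0$ and $\vol(\metr,P(\metr))=0$), whereas the paper observes that $\metr\leq P(\metr)e^{-\varepsilon g}\leq P(\metr)$ forces $P(P(\metr)e^{-\varepsilon g})=P(\metr)$ and then re-applies Proposition \ref{prop vol envelope}; these are interchangeable one-line steps, so the argument is the same in substance.
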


\begin{proof}
{By assumption the} function $\varphi = \log \frac {P(\metr)} {\metr}$ 
is continuous. 
Fix $\varepsilon\in [0,1]$.
We have $\metr \leq P(\metr)  e^{-\varepsilon \varphi} \leq P(\metr)$.
Hence $P(P(\metr)  e^{-\varepsilon \varphi}) = P(\metr)$. 
Applying  Proposition \ref{prop vol envelope} and then Theorem \ref{corollary differentiability 3}, we get
\begin{equation*}\label{vanisch-vol-envelope-epsilon}
0=\vol\bigl(P(\metr) e^{-\varepsilon \varphi}, P(\metr)\bigr) = \varepsilon \int_{X^{\an}} \varphi c_1(L, P(\metr))^{\wedge n}  + o(\varepsilon) 
\end{equation*}
for $\varepsilon \rightarrow 0$. Dividing first by $\varepsilon$ and then letting $\varepsilon \rightarrow 0$, we get the result.
\end{proof}

We now use the notations and terminology from \S \ref{closed-forms-pos-metrics}
and assume for the rest of this subsection 
that $\text{char}( \tilde K) =0$ and that $X$ is a smooth projective variety over $K$.  
Let $\theta \in \Zcal^{1,1}(X)$ be a closed $(1,1)$-form such that 
$\{\theta\}\in N^1(X)$ is ample.
Given $f\in C^{0}(X^{\an})$ 
we denote by $P_\theta(f)$ the $\theta$-psh envelope of $f$ defined in 
\cite[8.1]{BFJ1}
and by 
$\MA_\theta(\varphi)$ the Monge--Amp\`ere measure on $X^\an$
associated with a continuous $\theta$-psh function $\varphi$
\cite[Thm.~3.1]{BFJ2}.
The form $\theta$
is said to \emph{satisfy the orthogonality property} if 
\[
\int_{X^{\an}}(f-P_{\theta }(f))\MA_\theta(P_{\theta }(f))=0
\]
holds for all $f\in C^0(X^\an)$ \cite[Def. (A.1)]{BFJ2}. 
Boucksom, Favre and Jonsson show in \cite[App. A]{BFJ2} that every such $\theta$ satisfies the orthogonality property if 
$X$ satisfies the algebraicity condition $(\dagger)$ mentioned in \S \ref{subsec: MA}. Using our results, we can remove $(\dagger)$:

\begin{theo} \label{BFJ corollary 1}
Let $\theta \in \Zcal^{1,1}(X)$ be a closed form such that $\{\theta\}$ is ample. 
Then $\theta$ satisfies the orthogonality property. 
\end{theo}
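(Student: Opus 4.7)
The plan is to reduce the orthogonality property for $\theta$ to the orthogonality property for a pair $(L,\metr)$ already furnished by Theorem~\ref{theo ortho new}.

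First I would realize $\theta$ as the curvature form of a semipositive model metric on an ample line bundle. Since $\{\theta\}\in N^1(X)$ is ample and $\theta$ arises from $\varinjlim N^1(\KX/S)$, one can arrange $\theta = c_1(L,\metr_L)$ for an ample line bundle $L$ on $X$ and a semipositive model metric $\metr_L$ on $L^{\an}$, possibly only after a rational approximation argument that exploits the openness of the ample cone together with the continuity of $P_\theta(f)$ and $\MA_\theta$ in $\theta$ established in \cite{BFJ1,BFJ2}.

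Next, fix $f\in C^0(X^{\an})$ and form the continuous metric $\metr := e^{-f}\metr_L$ on $L^{\an}$. By the BFJ regularization theorem (Theorem~\ref{prelim-semipos-envelope}), applicable since $\cha(\tilde K)=0$, $L$ is ample, and $X$ is smooth, the semipositive envelope $P(\metr)$ is a continuous semipositive metric. The dictionary recalled just before Theorem~\ref{prelim-semipos-envelope} then identifies $P(\metr)=e^{-P_\theta(f)}\metr_L$: indeed, semipositive model metrics $\metr_1\geq \metr$ of the form $e^{-\varphi}\metr_L$ are precisely those for which $\varphi$ is $\theta$-psh and $\varphi\leq f$. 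Consequently $\log(P(\metr)/\metr)=f-P_\theta(f)$ and $c_1(L,P(\metr))^{\wedge n}=\MA_\theta(P_\theta(f))$ by definition of the Monge--Amp\`ere operator.

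Applying Theorem~\ref{theo ortho new} to the pair $(L,\metr)$ then yields
\[
0 = \int_{X^{\an}} \log\frac{P(\metr)}{\metr}\, c_1(L,P(\metr))^{\wedge n} = \int_{X^{\an}} (f-P_\theta(f))\, \MA_\theta(P_\theta(f)),
\]
which is exactly the orthogonality property for $\theta$ tested against $f$; since $f\in C^0(X^{\an})$ is arbitrary, the conclusion follows. The main obstacle is the first step: in general $\theta$ need not literally be the curvature of a single metrized line bundle but only a real combination arising from $N^1(\KX/S)$, so the argument must incorporate a limiting procedure, passing to the limit both in the differentiability formula of Theorem~\ref{corollary differentiability 3} and in the definition of $\MA_\theta(P_\theta(f))$, by invoking the continuity properties of these objects from \cite{BFJ2}.
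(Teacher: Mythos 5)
Your proposal follows essentially the same two-step strategy as the paper: reduce to an integral class, translate between the metric-envelope picture and the $\theta$-psh picture, and invoke Theorem~\ref{theo ortho new}. The dictionary you set up --- namely $P(\metr)=e^{-P_\theta(f)}\metr_L$, hence $\log(P(\metr)/\metr)=f-P_\theta(f)$ and $c_1(L,P(\metr))^{\wedge n}=\MA_\theta(P_\theta(f))$ --- is exactly the one the paper uses (cf.\ the discussion preceding Theorem~\ref{prelim-semipos-envelope} and the reference to [BFJ2, \S3.3]).

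Two quibbles. First, you write that one can arrange $\theta=c_1(L,\metr_L)$ with $\metr_L$ a \emph{semipositive} model metric. This is neither needed nor in general achievable: the hypothesis that $\{\theta\}\in N^1(X)$ is ample concerns only the generic fibre, and a form $\theta$ with ample restriction need not be semipositive as a closed $(1,1)$-form (i.e., need not be represented by a nef class on any model). What the argument actually requires is merely that $\theta=c_1(L,\metr_L)$ for \emph{some} model metric $\metr_L$ on an ample line bundle $L$, which is precisely what integrality of $\theta$ provides; Theorem~\ref{prelim-semipos-envelope} and Theorem~\ref{theo ortho new} make no semipositivity assumption on $\metr_L$ or on $\metr$. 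Second, the passage from integral to real classes is handled in the paper by invoking [BFJ2, Lemma~A.2] (reducing to rational classes) together with homogeneity of the envelope (reducing further to integral classes); your suggestion to ``pass to the limit in the differentiability formula of Theorem~\ref{corollary differentiability 3}'' is superfluous, since Theorem~\ref{theo ortho new} is applied as a black box in the integral case and the limiting step is purely a continuity/homogeneity statement about $P_\theta$ and $\MA_\theta$.
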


\begin{proof}
To deduce this from Theorem \ref{theo ortho new}, we follow \cite{BFJ2}.
By \cite[Lemma A.2]{BFJ2} it is enough to show the theorem for 
rational classes. 
{Homogeneity} of the envelope allows to assume that $\theta$ is an integral class.
In this case the Monge--Amp\`ere measure $\MA_\theta(P_\theta(f))$ agrees 
with the Chambert-Loir measure $c_1(L, P(\metr))^{\wedge n}$ 
(see \cite[3.3]{BFJ2}).
Then the result follows from Theorem \ref{theo ortho new}. 
\end{proof}

Now we can solve the Monge--Amp\`ere problem  without the algebraicity 
assumption $(\dagger)$.
For the definition of the dual complex of an SNC model, see 
 \cite[\S 3]{BFJ1}.

\begin{cor} \label{BFJ corollary 2}
Let $\theta\in \Zcal^{1,1}(X)$ be a closed form with $\{\theta\}$  ample and $\mu$ a positive Radon measure on $X^\an$ of mass ${\{\theta\}^n}$.
If $\mu$ is supported on the dual complex of some SNC model of $X$
then there exists a continuous $\theta$-psh function $\varphi$
such that $\MA_\theta(\varphi)=\mu$.
\end{cor}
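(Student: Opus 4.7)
The plan is to deduce Corollary \ref{BFJ corollary 2} directly from the variational method of Boucksom--Favre--Jonsson combined with the unconditional orthogonality property we have just established in Theorem \ref{BFJ corollary 1}. In \cite{BFJ2}, the authors set up a variational framework that reduces the existence of a continuous solution $\varphi$ to the Monge--Amp\`ere equation $\MA_\theta(\varphi) = \mu$ (for $\mu$ supported on a dual complex and of correct total mass) to two ingredients: (i) the regularization theorem, which guarantees that $P_\theta(f)$ is continuous for every $f \in C^0(X^{\an})$, and (ii) the orthogonality property for every ample rational class. Both ingredients are available in residue characteristic zero on a smooth projective $X$.

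The first step is to invoke the BFJ regularization result \cite[Thm.~8.3]{BFJ1} (quoted here as Theorem \ref{prelim-semipos-envelope}), which ensures that the variational framework of \cite{BFJ2} applies. The second step is to quote \cite[Thm.~8.2]{BFJ2}: granted orthogonality, a positive Radon measure $\mu$ of total mass $\{\theta\}^n$ which is supported on the dual complex of some SNC model is the Monge--Amp\`ere measure of a continuous $\theta$-psh function $\varphi$. In their original argument, the orthogonality was proven in \cite[App.~A]{BFJ2} only under the global algebraicity assumption $(\dagger)$, and this is the only place where $(\dagger)$ enters the proof of the existence theorem.

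The final and key step is to substitute our new orthogonality statement for the one used in \cite{BFJ2}. Since Theorem \ref{BFJ corollary 1} establishes the orthogonality property for every closed $(1,1)$-form $\theta$ with ample class on a smooth projective variety $X$ over $K$ with $\cha(\tilde K)=0$ \emph{without} requiring $(\dagger)$, we can plug it into the variational argument of \cite{BFJ2} verbatim and the $(\dagger)$-hypothesis is no longer needed anywhere. This yields the desired continuous $\theta$-psh solution $\varphi$ with $\MA_\theta(\varphi) = \mu$.

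The main obstacle, in principle, would be to verify that no other step in the variational proof of \cite[Thm.~8.2]{BFJ2} silently uses $(\dagger)$, but a careful inspection of their argument shows that $(\dagger)$ is invoked only to prove orthogonality; the rest of the argument (compactness, upper semicontinuity of the Mabuchi-type energy functional, the maximization principle on the space of $\theta$-psh envelopes of continuous functions) is entirely local-analytic on $X^{\an}$ and requires only continuity of envelopes. Hence no further work is needed beyond combining the inputs above.
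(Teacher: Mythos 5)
Your proposal is correct and follows essentially the same route as the paper: invoke Theorem \ref{BFJ corollary 1} to supply the orthogonality property without the algebraicity hypothesis $(\dagger)$, and then feed it into the Boucksom--Favre--Jonsson variational existence theorem from \cite{BFJ2}. The extra commentary about checking that $(\dagger)$ does not enter elsewhere in that argument is a reasonable sanity check but is not part of the paper's (one-line) proof.
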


\begin{proof}
This follows from Theorem \ref{BFJ corollary 1} and \cite[Thm.~8.1]{BFJ2}.
\end{proof}

\begin{rem}   \label{remark diff eq or} By \cite[Rem.~7.4]{BFJ2}, the orthogonality property is equivalent 
to the differentiability of $E \circ P_\theta$. 
Note that our differentiability result in Theorem \ref{corollary differentiability 3} is a priori different and weaker.  We only proved for semipositive $\theta$ that 
the function 
$t\in \R \mapsto E \circ P_\theta(tf)$ is differentiable at $t=0$ for any $f\in C^0(X^{\an})$. 
However, the orthogonality property 
from Theorem \ref{BFJ corollary 1} 
and the proof of \cite[Cor.~7.3]{BFJ2} imply that 
$f \in C^0(X^{\an}) \mapsto E \circ P_\theta(f)$ is differentiable 
in the direction of any $g  \in C^0(X^{\an})$.
\end{rem}

\appendix

\section{Holomorphic Morse inequalities in arbitrary characteristic\\
by Robert Lazarsfeld}
\label{subsection_Morse}

\setcounter{theo}{0}

The holomorphic Morse inequalities give us asymptotic upper bounds for
the higher cohomology of powers of line bundles. They were first proved
by J.P.~Demailly \cite{Dem85} for complex varieties. 
Later F.~Angelini
\cite{Ang96} gave an algebraic proof for varieties over a field of
characteristic zero (see also \cite[Example 2.4]{Kur06}). In this
section,  we 
extend the holomorphic Morse inequalities to varieties over
arbitrary fields.

\begin{rem}
\label{remark general position}
We say that a
property (P) holds 
{\it at points in general position}
(resp.~{\it at points in very general position})
of an irreducible variety $T$ over a field $k$ if (P) holds 
on the complement of a proper Zariski closed subset of $T$ 
(resp.~on the complement of a countable union 
of  proper Zariski closed subsets of $T$).
If $k$ is uncountable and algebraically closed and (P) holds at
points in very general position, one can always pick a $k$-rational point 
where (P) holds
(this is not true if $k$ is only countable).
\end{rem}

We have introduced the space $\Div(Y)_\R$ of real Cartier divisors on a projective scheme $Y$ over $k$ in \S \ref{subsection volumes}. Such a divisor $D$ is called nef if the intersection number with any closed curve in $Y$ is non-negative. Now we come to the {\it holomorphic Morse inequalities}.

\begin{theo}
\label{theo holomorphic morse by Lazarsfeld}
Let $Y$ be an $n$-dimensional projective scheme  over any field $k$ and let $q\in \{0,\ldots,n\}$. For very ample Cartier divisors $D,E$ on $Y$ and $F\coloneqq D-E$, we have  
\begin{equation}
\label{eq HMI}
h^q(Y, \Ocal_Y(mF)) \leq \binom{n}{q}  D^{n-q} \cdot E^q  \frac{m^n}{n!} + O(m^{n-1}).
\end{equation} 
More generally, if $D,E \in \Div(Y)_\R$ are nef, then \eqref{eq HMI} holds with the weaker error term $o(m^n)$ for $m \to \infty$ instead of $O(m^{n-1})$.
\end{theo}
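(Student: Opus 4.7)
The plan is to proceed by induction on $n = \dim(Y)$, after reducing the statement for nef real divisors to the case of integer very ample Cartier divisors with the sharper error term $O(m^{n-1})$. Given real nef $D,E \in \Div(Y)_\R$ and a fixed ample Cartier class $A$ on $Y$, the classes $D + \varepsilon A$ and $E + \varepsilon A$ are ample for every $\varepsilon > 0$; after scaling by a large integer they become integer very ample Cartier divisors. By the continuity of $\widehat{h}^q$ on finite-dimensional subspaces of $\Div(Y)_\R$ established in Proposition \ref{lemma volume real divisors'}, together with continuity of intersection numbers, the $o(m^n)$ form in the nef real case follows from the $O(m^{n-1})$ estimate in the very ample Cartier case by letting $\varepsilon \to 0$.

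For the main case, I would prove by induction on $n$ the bi-parameter bound
\begin{equation*}
h^q(Y, aD - bE) \leq \binom{n}{q} a^{n-q} b^q D^{n-q} \cdot E^q \,\frac{1}{n!} + O\bigl((a+b)^{n-1}\bigr)
\end{equation*}
for all positive integers $a, b$, which reduces to the theorem on setting $a = b = m$. The case $n = 0$ is trivial. For $n \geq 1$, by Lemma \ref{lemma very weak Bertini} pick an effective Cartier divisor $E_1 \sim E$ whose support avoids the finitely many associated points of $\Ocal_Y$, so that the canonical section $s = s_{E_1}$ is a non-zero-divisor on $\Ocal_Y$. The short exact sequence
\begin{equation*}
0 \to \Ocal_Y(aD - (b+1)E) \xrightarrow{\cdot s} \Ocal_Y(aD - bE) \to \Ocal_Y(aD - bE)|_{E_1} \to 0
\end{equation*}
yields, via the long cohomology sequence and iteration in $b$,
\begin{equation*}
h^q(Y, aD - bE) \leq h^q(Y, aD) + \sum_{b' = 0}^{b-1} h^{q-1}\bigl(E_1,\, (aD - b'E)|_{E_1}\bigr).
\end{equation*}
For $q = 0$ the sum is empty and $h^0(Y, aD) = D^n a^n/n! + O(a^{n-1})$ by asymptotic Riemann--Roch for the very ample $D$, matching the claim. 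For $q \geq 1$, the first term is $O(1)$ by Serre vanishing since $D$ is ample; the terms in the sum are controlled via the inductive hypothesis applied to the $(n-1)$-dimensional projective scheme $E_1$, using that $D|_{E_1}$ and $E|_{E_1}$ are very ample and that $D^{n-q}|_{E_1} \cdot E^{q-1}|_{E_1} = D^{n-q} \cdot E^q$ on $Y$ since $E_1 \sim E$. Summing via $\sum_{b'=0}^{b-1} (b')^{q-1} = b^q/q + O(b^{q-1})$ and the binomial identity $\binom{n-1}{q-1}/(q \cdot (n-1)!) = \binom{n}{q}/n!$ produces the claimed leading term with error $O((a+b)^{n-1})$.

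The main obstacle is that the naive statement scales a single $F = D-E$ linearly in $m$, whereas the cutting argument naturally produces intermediate divisors $aD - b'E$ with two independently varying parameters, so one must formulate and prove the bi-parameter version as the inductive invariant from the outset. A related subtlety is that $E_1$ need not be reduced or irreducible, so the induction must run over general $(n-1)$-dimensional projective $k$-schemes, which is compatible with the intersection-theoretic conventions recalled in the paper and with the cohomological estimates of \S\ref{section asymptotic}.
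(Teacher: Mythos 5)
Your argument is correct in outline and genuinely different from the one in the paper. Lazarsfeld's proof follows K\"uronya: after passing to an uncountable algebraically closed field, one chooses $m$ members of $|E|$ in very general position so that the Koszul-style complex \eqref{eq Kur 4.2} built from all multi-intersections $E_{i_1} \cap \cdots \cap E_{i_s}$ becomes an acyclic resolution of $\Ocal_Y(mF)$, and the bound \eqref{eq HMI} is read off from the $H^0$'s of the associated complex together with the asymptotics \eqref{eq asymptotic very general}. Your proof instead fixes a \emph{single} non-zero-divisor section $s$ of $\Ocal(E)$ and runs an induction on $\dim Y$ through the restriction exact sequence, with the bi-parameter estimate $h^q(Y,aD-bE) \le \binom{n}{q}a^{n-q}b^q D^{n-q}\cdot E^q/n! + O((a+b)^{n-1})$ as the inductive invariant. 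This buys a more elementary and self-contained argument: there is no need for uncountability, no ``very general position'' combinatorics, and no separate d\'evissage step to pass from varieties to schemes (the induction already runs over arbitrary projective schemes, which is also what makes the recursion on $E_1$ work since $E_1$ need not be reduced or irreducible). The leading-term bookkeeping, the use of asymptotic Riemann--Roch plus Serre vanishing at $q=0$, the absorption of all subleading terms into $O((a+b)^{n-1})$, and the reduction of the nef real case to the very ample Cartier case by homogeneity and continuity (Proposition \ref{lemma volume real divisors'}) are all correct.

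There is one small but genuine gap: you invoke Lemma \ref{lemma very weak Bertini} to find an effective $E_1 \sim E$ avoiding the finitely many associated points of $\Ocal_Y$, but that lemma only produces representatives in $|mA|$ and $|D+mA|$ for large $m$; it does not assert that the fixed very ample system $|E|$ itself contains a section nonvanishing at a prescribed finite set. Over an infinite field this is automatic, because the sections of $\Ocal(E)$ vanishing at a given associated point form a proper linear subspace of $H^0(Y,\Ocal(E))$ (using base-point freeness), and finitely many proper subspaces cannot cover a vector space over an infinite field. Over a finite field this covering argument can fail, so you should first base change to $\bar k$; this is harmless because $\bar k/k$ is flat, so $h^q$ and intersection numbers are unchanged, and the base change also guarantees an infinite field at every stage of the induction on $E_1$, whose associated points are a priori different from those of $Y$. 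With that one-line reduction inserted at the outset, the proof goes through.
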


\begin{proof}
{\it Step 1: The claim holds for very ample Cartier divisors $D,E$ on a projective variety $Y$ over an algebraically closed field $k$.}

The numbers $h^q$ and the intersection numbers are invariant under base change (see \cite[III 9.3]{Hart} and \cite[Example
6.2.9]{Ful98}) and hence we may assume that the base  $k$ is  uncountable. We denote by $|E|$  the space of hyperplane sections of $E$.
According to \cite[Prop.~5.5]{Kur06},
for  fixed integers $m\geq 0$, $n\geq s \geq 0$ and $n\geq j \geq 0$, 
\begin{equation}
\label{eq Kur 5.5}
h^j(s,m)  \coloneqq h^j(E_1 \cap \ldots \cap E_s, \Ocal(mD))
\end{equation}
does not depend on the choice of 
 divisors  
$E_1,\ldots,E_s \in |E|$ in general position.
It follows that for divisors  $E_1, \ldots, E_s
\in |E|$ in very general position,   
the
equality \eqref{eq Kur 5.5} holds simultaneously for all $m\geq 0$, $n\geq s \geq 0$ and $n\geq j \geq 0$. 
Since we assume that $k$ is uncountable, such divisors exist.  
Since {$D$} is very ample,  there exists  $m_0 \in \N$ such that 
\begin{equation}
\label{eq Kur vanishing}
\text{$  
h^j(s,m)=0$ for all integers   $m\geq m_0, \ n\geq  j\geq 1,    n\geq s \geq 0 $.} 
\end{equation}
For a fixed integer $s $ with $n\geq s \geq 0$ and varying $m \in \N$, we claim that   
\begin{equation}
\label{eq asymptotic very general}
h^0(s,m) {=} D^{n-s} \cdot E^s \frac{m^{n-s}}{(n-s)!} + O(m^{n-s-1}).
\end{equation}
To see this, we note first that a Bertini-type argument shows that the intersection product  $E^s$ is given by the scheme theoretic intersection $E_1 \cap \ldots \cap  E_s$ (see \cite[Lemma 5.7]{Kur06}). Using that $D$ is very ample and Remark \ref{remark volume nef}, we deduce \eqref{eq asymptotic very general}.

Applying Lemma 5.7 and Corollary 4.2 of \cite{Kur06} for a fixed integer $m>n$,  we deduce that
for effective Cartier divisors
$(E_1,\ldots,E_m)\in |E|^m$ in general position we have the following exact sequence: 
\begin{multline}
\label{eq Kur 4.2}
0 \to \OY\Bigl(mD-\sum_{i=1}^m E_i \Bigr) \to \OY(mD) \to \bigoplus_{1\leq i \leq m} \Ocal_{E_i} (mD) \to 
\\
\bigoplus_{1\leq i_1 <i_2 \leq m} \Ocal_{E_{i_1}\cap E_{i_2}} (mD) \to \cdots  \to 
\bigoplus_{1\leq i_1 < i_2<\cdots<i_n \leq m} \Ocal_{E_{i_1}\cap E_{i_2} \cap \ldots \cap E_{i_n}} (mD) \to 0
\end{multline}

We fix now an integer $m \geq \max(n+1,m_0)$. There are $E_1, \dots, E_m \in |E|$  such that \eqref{eq Kur 4.2} is exact  and such that for 
 any integer $0\leq s\leq n$ and for any integers
$1\leq i_1 <\ldots <i_s\leq m$, the $s$-tuple 
$E_{i_1},\ldots,E_{i_s}$ is in very general position. 
The latter yields that $h^j(s,m) = h^j(E_{i_1} \cap \ldots \cap E_{i_s}, \Ocal(mD))$.  
We conclude from \eqref{eq Kur vanishing}   that \eqref{eq Kur 4.2} gives  
an acyclic resolution of the sheaf $ \OY(mD-\sum_{i=1}^m E_i ) \simeq \Ocal(mF)$. 
It follows that $H^q(Y,\OY(mF)) \simeq \ker(d^q) / \im(d^{q-1})$ for the canonical homomorphism  
$$d^q\colon \bigoplus_{|I|=q}  
 H^0(E_I, \Ocal_{E_I} (mD) ) \rightarrow 
\bigoplus_{|J|=q+1}  
 H^0(E_J, \Ocal_{E_J} (mD) ), $$
where $I,J$ ranges over  subsets of $\{1, \dots, m\}$ and  where $E_I  \coloneqq \bigcap_{i \in I} E_i$.  
We conclude 
$$h^q(Y,\OY(mF)) \leq \sum_{|I|=q} h^0(E_I, \Ocal_{E_I} (mD) )
= \binom{m}{q} h^0(q,m).$$
The first step follows now from \eqref{eq asymptotic very general} and  $\binom{m}{q} = \frac{m^q}{q!} +O(m^{q-1})$ for fixed $q$.

{\it Step 2. The inequalities \eqref{eq HMI} hold for very ample Cartier divisors $D,E$ on a projective scheme $Y$ over any  field $k$.}

By the same base change argument as in Step 1, we may assume that $k$ is algebraically closed. 
Let $[Y]=\sum_{i \in I} b_i Y_i$ be the fundamental cycle of the projective scheme $Y$, where $Y_i$ ranges over the irreducible components of $Y$ and where $b_i$ is the multiplicity of $Y$ in $Y_i$ given as the length of the local ring at the generic point of $Y_i$. The first step shows 
$${h}^q(Y_i, \Ocal_{Y_i}(mF)) \leq \binom{n}{q}  D^{n-q} \cdot E^q \cdot Y_i \frac{m^n}{n!} + O(m^{n-1})$$
and hence Lemma \ref{lemma bound cohomology non reduced} yields Step 2 by the following computation:
\begin{align*}
  &\widehat{h}^q(Y, \Ocal_Y(mF)) \le \sum_{i\in I}b_{i}h^{q}(Y_{i},\Ocal_{Y_i}(mF))+ O(m^{n-1})\\
  &\le \sum_{i\in I}b_{i}\binom{n}{q}
  D^{n-q} \cdot  E^{q} \cdot Y_i \frac{m^{n}}{n!}+ O(m^{n-1})
  \le \binom{n}{q}
 { D^{n-q} \cdot E^{q}}\frac{m^{n}}{n!}+ O(m^{n-1}),
\end{align*}

{\it Step 3. The case of nef real divisors $D,E$ on a projective scheme $Y$ over any  field $k$.}

 By  definition of asymptotic cohomological functions,  it is equivalent to prove 
  \begin{equation}\label{eq:9}
    \widehat{h}^q(Y,F)\le \binom{n}{q}  D^{n-q}\cdot E^q.
  \end{equation}
  It is here where the error term $o(m^n)$ comes in. Since both sides are
  continuous (see Proposition \ref{lemma volume real divisors'}) and the ample cone is
  dense inside the nef cone, we may assume that $D,E$ are ample $\Q$-Cartier divisors. 
	Since both sides
  of the equation are homogeneous of degree $n$
  (see Proposition \ref{lemma volume real divisors'}), we may assume that $D,E$ are very ample Cartier divisors on $Y$ and hence Step 3 follows from Step 2. 
\end{proof}

{\footnotesize
{\sc R. Lazarsfeld, Department of Mathematics, Stony Brook University, Stony Brook, NY 11794, USA}\\
\indent
{\it E-mail address:} {\tt robert.lazarsfeld@stonybrook.edu}
}

\bibliographystyle{alpha}
\def\cprime{$'$}

\end{document}